\numberwithin{equation}{section} 
\newcommand{\myint}[3]{\int_{{#1}}{#2}\,\text{d}{#3}}
\newcommand{\kt}{{k}}
\newcommand{\teta}{\widetilde{\eta}}
\newcommand{\txip}{\widetilde{\xi}'}
\newcommand{\txi}{\widetilde{\xi}}
\newcommand{\inverse}{{-1}}
\newcommand{\pex}{\parnorm{\eta}{\xi}}
\newcommand{\pexp}{\parnorm{\eta}{\xi'}}
\newcommand{\pkx}{\parnorm{k}{\xi}}
\newcommand{\pkxp}{\parnorm{k}{\xi'}}
\newcommand{\multrestriction}[2]{#1_{|#2}}
\newcommand{\funcrestriction}[2]{#1_{|#2}}
\newcommand{\Dtracekonst}{\iota}
\newcommand{\Btracekonst}{\kappa}
\newcommand{\hgrp}{{\torus\times\R^{n-1}}}
\newcommand{\xin}{z}
\newcommand{\half}{\frac{1}{2}}
\newcommand{\cutoff}{\chi}
\newcommand{\uf}{u}
\newcommand{\vf}{v}
\newcommand{\wf}{w}
\newcommand{\f}{f}
\newcommand{\g}{g}
\newcommand{\per}{T}
\newcommand{\iper}{\frac{1}{\per}}
\newcommand{\perf}{\frac{2\pi}{\per}}
\newcommand{\dualrho}{\hat{\rho}}
\newcommand{\charplus}{Y_+}
\newcommand{\magicvector}{L}
\newcommand{\minusnullset}{\setminus\set{(0,0)}}
\newcommand{\xip}{{\xi'}}
\newcommand{\xitan}{{\zeta}}
\newcommand{\Mmultiplier}{M}
\newenvironment{pdeq}{ \left\{ \begin{aligned}}{\end{aligned}\right.}
\newcommand{\np}[1]{(#1)}
\newcommand{\nb}[1]{[#1]}
\newcommand{\bp}[1]{\big(#1\big)}
\newcommand{\bb}[1]{\big[#1\big]}
\newcommand{\Bp}[1]{\bigg(#1\bigg)}
\newcommand{\calp}{{\mathcal P}}
\newcommand{\calr}{{\mathcal R}}
\newcommand{\R}{\mathbb{R}}
\newcommand{\Z}{\mathbb{Z}}
\newcommand{\C}{\mathbb{C}}
\newcommand{\CNumbers}{\mathbb{C}}
\newcommand{\N}{\mathbb{N}}
\DeclareMathOperator{\Ext}{Ext}
\DeclareMathOperator{\impart}{Im}
\DeclareMathOperator{\e}{e}
\DeclareMathOperator{\id}{\textsf{id}}
\DeclareMathOperator{\cof}{cof}
\DeclareMathOperator{\supp}{supp}
\DeclareMathOperator{\trace}{Tr}
\newcommand{\embeds}{\hookrightarrow}
\newcommand{\elop}{A^H}
\newcommand{\bdop}{B^H}
\newcommand{\elopfull}{A}
\newcommand{\bdopfull}{B}
\newcommand{\parop}{\mathsf{A}}
\newcommand{\paropinv}{\parop^{-1}}
\newcommand{\paropp}{\parop_+}
\newcommand{\paropm}{\parop_-}
\newcommand{\paroppm}{\parop_{\pm}}
\newcommand{\paroppinv}{\paropp^{-1}}
\newcommand{\paropminv}{\paropm^{-1}}
\newcommand{\paroppminv}{\paroppm^{-1}}
\newcommand{\Fparop}{\frak{M}}
\newcommand{\Fparopcoef}{c}
\newcommand{\Fparopper}{\mathsf{M}}
\newcommand{\mult}{\frak{m}}
\newcommand{\tmult}{\frak{\widetilde{m}}}
\newcommand{\Mult}{\frak{M}}
\newcommand{\multper}{\mathsf{m}}
\newcommand{\Ra}{\Rightarrow}
\newcommand{\ra}{\rightarrow}
\newcommand{\set}[1]{\ensuremath{\{#1\}}}
\newcommand{\setc}[2]{\ensuremath{\{#1\ \lvert\ #2\}}}
\newcommand{\setcl}[2]{\ensuremath{\bigl\{#1\ \lvert\ #2\bigr\}}}
\newcommand{\closure}[2]{\overline{#1}^{#2}}
\newcommand{\seqN}[1]{\ensuremath{\set{#1}_{n=1}^\infty}}
\newcommand{\proj}{\calp}
\newcommand{\projcompl}{\calp_\bot}
\newcommand{\quotientmap}{\pi}
\newcommand{\grp}{G}
\newcommand{\dualgrp}{\widehat{G}}
\newcommand{\torus}{{\mathbb T}}
\newcommand{\grpprime}{{H}}
\newcommand{\transpose}{\top}
\newcommand{\charmatrix}{F}
\newcommand{\invcharmatrix}{F^{-1}}
\newcommand{\halfgrp}{\torus\times\halfspace}
\newcommand{\halfgrpclosed}{\torus\times\overline{\halfspace}}
\newcommand{\lhalfgrpclosed}{\torus\times\overline{\lhalfspace}}
\newcommand{\wspace}{\R^n}
\newcommand{\halfspace}{\wspace_+}
\newcommand{\lhalfspace}{\wspace_-}
\newcommand{\dx}{{\mathrm d}x}
\newcommand{\ds}{{\mathrm d}s}
\newcommand{\dt}{{\mathrm d}t}
\newcommand{\darg}[1]{{\mathrm d}#1}
\newcommand{\dxi}{{\mathrm d}\xi}
\newcommand{\Hardy}{\mathscr{H}}
\newcommand{\SR}{\mathscr{S}}
\newcommand{\SRcompl}{\SR_\bot}
\newcommand{\TDR}{\mathscr{S^\prime}}
\newcommand{\TDRcompl}{\mathscr{S^\prime_\bot}}
\newcommand{\ft}[1]{\widehat{#1}}
\newcommand{\ift}[1]{{#1}^\vee}
\newcommand{\FT}{\mathscr{F}}
\newcommand{\iFT}{\mathscr{F}^{-1}}
\newcommand{\op}{\mathsf{op}\,}
\newcommand{\norm}[1]{\lVert#1\rVert}
\newcommand{\norml}[1]{\bigl\lVert#1\bigr\rVert}
\newcommand{\snorm}[1]{{\lvert #1 \rvert}}
\newcommand{\opnorm}[1]{{\lvert\kern-0.25ex\lvert\kern-0.25ex\lvert #1 \rvert\kern-0.25ex\rvert\kern-0.25ex\rvert}}
\newcommand{\parnorm}[2]{|#1,#2|}
\newcommand{\WSR}[2]{W^{#1,#2}} 
\newcommand{\HSR}[2]{H^{#1,#2}}
\newcommand{\WSRloc}[2]{W^{#1,#2}_{\mathrm{loc}}} 
\newcommand{\CR}[1]{C^{#1}}  
\newcommand{\LR}[1]{L^{#1}}
\newcommand{\LRcompl}[1]{L^{#1}_\bot}
\newcommand{\LRloc}[1]{L^{#1}_{\mathrm{loc}}} 
\newcommand{\CRi}{\CR \infty}
\newcommand{\CRci}{\CR \infty_0}
\newcommand{\tracespacecompl}[2]{T_\bot^{#1,#2}}
\newcommand{\VR}{V} 
\newcommand{\LRper}[1]{L_{\mathrm{per}}^{#1}}
\newcommand{\CRciper}{\CR{\infty}_{0,\mathrm{per}}}
\newcommand{\WSRper}[2]{W^{#1,#2}_{\mathrm{per}}}
\newcommand{\WSRcompl}[2]{W^{#1,#2}_{\bot}} 
\newcommand{\HSRcompl}[2]{H^{#1,#2}_{\bot}} 
\newcommand{\tin}{\text{in }}
\newcommand{\ton}{\text{on }}
\newcommand{\tand}{\text{and }}
\newcommand{\newCCtr}[2][d]{
\newcounter{#2}\setcounter{#2}{0}
\expandafter\xdef\csname kyedtheconst#2\endcsname{#1}
}
\newcommand{\Cc}[2][nolabel]{
\stepcounter{#2}
\expandafter\ensuremath{\csname kyedtheconst#2\endcsname_{\arabic{#2}}}
\ifthenelse{\equal{#1}{nolabel}}
{}
{\expandafter\xdef\csname kyedconst#1\endcsname
{\expandafter\ensuremath{\csname kyedtheconst#2\endcsname_{\arabic{#2}}}}}
}
\newcommand{\Ccn}[2][nolabel]{
\expandafter\ensuremath{\csname kyedtheconst#2\endcsname}
\ifthenelse{\equal{#1}{nolabel}}
{}
{\expandafter\xdef\csname kyedconst#1\endcsname
{\expandafter\ensuremath{\csname kyedtheconst#2\endcsname}}}
}
\newcommand{\CcSetCtr}[2]{
\setcounter{#1}{#2}
}
\newcommand{\Cclast}[1]{
\expandafter\ensuremath{\csname kyedtheconst#1\endcsname_{\arabic{#1}}}
}
\newcommand{\Ccnlast}[1]{
\Ccn{#1}
}
\newcommand{\Ccllast}[1]{
\addtocounter{#1}{-1}
\expandafter\ensuremath{\csname kyedtheconst#1\endcsname_{\arabic{#1}}}
\addtocounter{#1}{1}
}
\newcommand{\const}[1]{
\expandafter{\ifcsname kyedconst#1\endcsname
  \csname kyedconst#1\endcsname
\else
  \errmessage{Undefined Kyedconstant #1.}%
\fi}
}
\renewcommand{\frak}{\mathfrak}
\theoremstyle{plain}
\newtheorem{thm}{Theorem}[section]
\newtheorem{defn}[thm]{Definition}
\newtheorem{lem}[thm]{Lemma}
\newtheorem{prop}[thm]{Proposition}
\newtheorem{cor}[thm]{Corollary}
\theoremstyle{remark}
\newtheorem{rem}[thm]{Remark}
\begin{document}
\title{On Time-Periodic Solutions to Parabolic Boundary Value Problems of Agmon-Douglis-Nirenberg Type}

\author{
Mads Kyed\\ 
Fachbereich Mathematik\\
Technische Universit\"at Darmstadt\\
Schlossgartenstr. 7, 64289 Darmstadt, Germany\\
Email: \texttt{kyed@mathematik.tu-darmstadt.de}
\and
Jonas Sauer\\ 
Max-Planck-Institut f\"ur Mathematik in den Naturwissenschaften\\
Inselstr. 22, 04103 Leipzig, Germany\\
Email: \texttt{sauer@mis.mpg.de}
}

\date{\today}
\maketitle

\begin{abstract}
Time-periodic solutions to partial differential equations of parabolic type corresponding to an operator that is elliptic in the sense of Agmon-Douglis-Nirenberg are investigated. 
In the whole- and half-space case we construct an explicit formula for the solution and establish coercive $\LR{p}$ estimates. 
The estimates generalize a famous result of Agmon, Douglis and Nirenberg for elliptic problems to the time-periodic case.   
\end{abstract}

\noindent\textbf{MSC2010:} Primary 35B10, 35B45, 35K25.\\
\noindent\textbf{Keywords:} Time-periodic, parabolic, boundary value problem, a priori estimates.

\newCCtr[c]{c}
\newCCtr[C]{C}
\newCCtr[M]{M}
\newCCtr[B]{B}
\newCCtr[\epsilon]{eps}
\CcSetCtr{eps}{-1}

\section{Introduction}

We investigate time-periodic solutions to parabolic boundary value problems 
\begin{align}\label{intro_maineq}
\begin{pdeq}
\partial_t\uf+\elopfull\uf&=\f &&\tin\R\times\Omega,\\
\bdopfull_j\uf&=\g_j && \ton\R\times\partial\Omega,
\end{pdeq}
\end{align}
where $\elopfull$ is an elliptic operator of order $2m$ and $\bdopfull_1,\ldots,\bdopfull_m$ satisfy an appropriate complementing boundary condition.
The domain $\Omega$ is either the whole-space, the half-space or a bounded domain, and $\R$ denotes the time-axis. 
The solutions $\uf(t,x)$ correspond to time-periodic data $\f(t,x)$ and $\g_j(t,x)$ of the same (fixed) period $\per>0$. 
Using the simple projections
\begin{align*}
\proj\uf = \iper\int_0^\per \uf(t,x)\,\dt,\quad \projcompl:=\id-\proj,
\end{align*}
we decompose \eqref{intro_maineq} into an elliptic problem
\begin{align}\label{intro_projeq}
\begin{pdeq}
\elopfull\proj\uf&=\proj\f &&\tin\Omega,\\
\bdopfull_j\proj\uf&=\proj\g_j && \ton\partial\Omega,
\end{pdeq}
\end{align}
and a \emph{purely oscillatory} problem
\begin{align}\label{intro_projcompleq}
\begin{pdeq}
\partial_t\projcompl\uf+\elopfull\projcompl\uf&=\projcompl\f &&\tin\R\times\Omega,\\
\bdopfull_j\projcompl\uf&=\projcompl\g_j && \ton\R\times\partial\Omega.
\end{pdeq}
\end{align}
The problem \eqref{intro_projeq} is elliptic in the sense of Agmon-Douglis-Nirenberg, for which a comprehensive $\LR{p}$ theory was established in \cite{ADN1}.
In this article, we develop a complementary theory for the purely oscillatory problem \eqref{intro_projcompleq}. 
Employing ideas going back to \textsc{Peetre} \cite{Peetre61} and \textsc{Arkeryd} \cite{Arkeryd67}, we are able to establish an explicit formula for the solution to \eqref{intro_projcompleq} when the domain is either
the whole- or the half-space. 
We shall then introduce a technique based on tools from abstract harmonic analysis to show coercive $\LR{p}$ estimates. As a consequence, we obtain 
a time-periodic version of the celebrated theorem of  
\textsc{Agmon}, \textsc{Douglis} and \textsc{Nirenberg} \cite{ADN1}. 

The decomposition \eqref{intro_projeq}--\eqref{intro_projcompleq} is essential as the two problems
have substantially different properties.
In particular, we shall show in the whole- and half-space case that the principle part of the linear operator in the purely oscillatory problem 
\eqref{intro_projcompleq} is a 
homeomorphism in a canonical setting of time-periodic Lebesgue-Sobolev spaces. This is especially remarkable since the elliptic problem \eqref{intro_projeq} clearly does not satisfy this property. 
Another truly remarkable characteristic of \eqref{intro_projcompleq} is that the $\LR{p}$ theory we shall develop for this problem leads directly to a similar $\LR{p}$ theory, sometimes referred to as maximal regularity, for the parabolic initial-value problem associated to \eqref{intro_maineq}.

We consider general differential operators 
\begin{align}\label{DefOfDiffOprfull}
\elopfull(x,D):=\sum_{|\alpha|\leq 2m}a_\alpha(x) D^\alpha,\quad \bdopfull_j(x,D):=\sum_{|\alpha|\leq m_j}b_{j,\alpha}(x) D^\alpha\quad (j=1,\ldots,m) 
\end{align}
with complex coefficients 
$a_\alpha:\Omega\to\CNumbers$ and
$b_{j,\alpha}:\partial\Omega\to\CNumbers$. Here, $\alpha\in\N^{n}$ is a multi-index and 
$D^\alpha:=i^{|\alpha|}\partial_{x_1}^{\alpha_1}\ldots\partial_{x_n}^{\alpha_n}$.
We denote the principle part of the operators by
\begin{align}
\elop(x,D):=\sum_{|\alpha|= 2m}a_\alpha(x) D^\alpha,\quad \bdop_j(x,D):=\sum_{|\alpha|= m_j}b_{j,\alpha}(x) D^\alpha. 
\end{align}
We shall assume that $\elop$ is elliptic in the following classical sense:

\begin{defn}[Properly Elliptic]\label{TPADN_ProperlyElliptic}
The operator $\elop$ is said to be \emph{properly elliptic} if
for all $x\in\Omega$ and all $\xi\in\R^n\setminus\set{0}$ it holds $\elop(x,\xi)\neq 0$, and  
for all $x\in\Omega$ and all linearly independent vectors $\xitan,\xi\in\R^n$ the polynomial 
$P(\tau):= \elop(x,\xitan+\tau\xi)$ 
has $m$ roots in $\CNumbers$ with positive imaginary part, 
and 
$m$ roots in $\CNumbers$ with negative imaginary part. 
\end{defn}

Ellipticity, however, does not suffice to establish maximal $\LR{p}$ regularity for the time-periodic problem.
We thus recall Agmon's condition, also known as parameter ellipticity.

\begin{defn}[Agmon's Condition]\label{TPADN_AgmonCondA}
Let $\theta\in[-\pi,\pi]$. A properly elliptic operator $\elop$ is said to satisfy Agmon's condition on the ray $\e^{i\theta}$ if  
for all $x\in\Omega$ and all $\xi\in\R^n\setminus\set{0}$ it holds $\elop(x,\xi)\notin \setc{r\e^{i\theta}}{r\geq 0}$.
\end{defn}

If $\elop$ satisfies Agmon's condition on the ray $\e^{i\theta}$, then,
since the roots of a polynomial depend continuously on its coefficients, 
the polynomial 
$Q(\tau):=-r\e^{i\theta}+\elop(x,\xitan+\tau\xi)$
has $m$ roots $\tau_h^+(r\e^{i\theta},x,\xitan,\xi)\in\CNumbers$ with positive imaginary part,
and $m$ roots $\tau_h^-(r\e^{i\theta},x,\xitan,\xi)\in\CNumbers$ with negative imaginary part ($h=1,\ldots,m$). 
Consequently, the following assumption on the operator $(\elop,\bdop_1,\ldots,\bdop_m)$ is meaningful.

\begin{defn}[Agmon's Complementing Condition]\label{TPADN_AgmonCondAB}
Let $\theta\in[-\pi,\pi]$. If $\elop$ is a properly elliptic operator, then $(\elop,\bdop_1,\ldots,\bdop_m)$
is said to satisfy Agmon's complementing condition on the ray $\e^{i\theta}$ if: 
\begin{enumerate}[(i)]
\item $\elop$ satisfies Agmon's condition on the ray $\e^{i\theta}$.
\item For all $x\in\partial\Omega$, all pairs $\xitan,\xi\in\R^n$ with $\xitan$ tangent to $\partial\Omega$ and $\xi\in\R^n$ normal to $\partial\Omega$ at $x$, and all $r\geq 0$, let $\tau_h^+(r\e^{i\theta},x,\xitan,\xi)\in\CNumbers$ ($h=1,\ldots,m$) denote the $m$ roots of the polynomial 
${Q(\tau):=-r\e^{i\theta}+\elop(x,\xitan+\tau\xi)}$
with positive imaginary part. The polynomials
$P_j(\tau):=\bdop_j(x,\xitan+\tau\xi)$ ($j=1,\ldots,m$)  are linearly independent modulo the polynomial 
$\Pi_{h=1}^m \bp{\tau-\tau_h^+(r\e^{i\theta},x,\xitan,\xi)}$.
\end{enumerate}
\end{defn}

The property specified in Definition \ref{TPADN_AgmonCondAB} was first introduced by \textsc{Agmon} in \cite{Agmon62},
and later by \textsc{Agranovich} and \textsc{Vishik} in \cite{AgranovichVishik1964} as \textit{parameter ellipticity}. The condition was introduced in order to identify the additional requirements on the differential operators needed to extend the result of \textsc{Agmon}, \textsc{Douglis} and \textsc{Nirenberg} \cite{ADN1} from the 
elliptic case to the corresponding parabolic initial-value problem. 
The theorem of \textsc{Agmon}, \textsc{Douglis} and \textsc{Nirenberg} \cite{ADN1} requires $(\elop,\bdop_1,\ldots,\bdop_m)$ 
to satisfy Agmon's complementing condition only at the origin (\emph{not} on a full ray), in which case
$(\elop,\bdop_1,\ldots,\bdop_m)$ is said to be \textit{elliptic in the sense of Agmon-Douglis-Nirenberg}.
Analysis of the associated initial-value problem relies heavily on properties of the resolvent equation 
\begin{align}\label{intro_resolventeq}
\begin{pdeq}
\lambda\uf+\elop\uf&=\f &&\tin\Omega,\\
\bdop_j\uf&=0 && \ton\partial\Omega.
\end{pdeq}
\end{align}
It was shown by \textsc{Agmon} \cite{Agmon62} that a necessary and sufficient condition for the 
resolvent of $(\elop,\bdop_1,\ldots,\bdop_m)$ to lie in the negative complex half-plane (and thereby for the generation of an analytic semi-group) is that 
Agmon's complementing condition is satisfied for all rays with $\snorm{\theta}\geq\frac{\pi}{2}$. 
However, the step from 
analyticity of the semi-group to maximal $\LR{p}$ regularity for the parabolic initial-value problem
proved to be highly non-trivial. Although many articles were dedicated to this problem after the publication of \cite{Agmon62},
it was not until the celebrated work of \textsc{Dore} and \textsc{Venni} \cite{DoreVenni87} that a framework was developed with which maximal regularity could be established comprehensively from the assumption that Agmon's condition is satisfied for all rays with $\snorm{\theta}\geq\frac{\pi}{2}$. 
To apply \cite{DoreVenni87}, one has show that $(\elop,\bdop_1,\ldots,\bdop_m)$
admits bounded imaginary powers.
Later, it was shown that maximal regularity is in fact equivalent to $\calr$-boundedness of an appropriate resolvent family; see \cite{DenkHieberPruess_AMS2003}.  
Remarkably, our result for the time-periodic problem \eqref{intro_projcompleq} leads to a new and relatively short proof of 
maximal regularity for the parabolic initial-value problem \emph{without} the use of either bounded imaginary powers or the notion of $\calr$-boundedness; see Remark \ref{Intr_Rem} below. Under the assumption that $(\elop,\bdop_1,\ldots,\bdop_m)$ generates an analytic semi-group,
maximal regularity for the parabolic initial-value problem follows almost immediately as a corollary from our main theorem.
We emphasize that our main theorem of maximal regularity for the time-periodic problem does \emph{not} require 
the principle part of $(\elopfull,\bdopfull_1,\ldots,\bdopfull_m)$ to generate an analytic semi-group.
As a novelty of the present paper, and in contrast to the initial-value problem, we establish 
that maximal $\LR{p}$ regularity for the time-periodic problem requires Agmon's complementing condition to be satisfied only on the two rays with $\theta=\pm\frac{\pi}{2}$, that is, only on the imaginary axis. 

Our main theorem for the purely oscillatory problem \eqref{intro_projcompleq} concerns the half-space case and the question of existence of a unique solution satisfying  a coercive $\LR{p}$ estimate in 
the Sobolev space $\WSRper{1,2m}{p}(\R\times\R^n_+)$ of time-periodic functions 
on the time-space domain $\R\times\R^n_+$. We refer to Section \ref{pre} for definitions of the function spaces.
 
\begin{thm}\label{MainThm_HalfSpace}
Let $p\in (1,\infty)$, $\per>0$, $n\geq 2$. 
Assume that $\elop$ and $(\bdop_1,\ldots,\bdop_m)$ have constant coefficients. 
If $\elop$ is properly elliptic 
and $(\elop,\bdop_1,\ldots,\bdop_m)$ satisfies Agmon's complementing condition on the two rays $\e^{i\theta}$ with $\theta=\pm\frac{\pi}{2}$, 
then for all functions $\f\in\projcompl\LRper{p}(\R\times\halfspace)$
and $\g_j\in\projcompl\WSRper{\kappa_j,2m\kappa_j}{p}(\R\times\partial\halfspace)$ with $\kappa_j:=1-\frac{m_j}{2m}-\frac{1}{2mp}$ ($j=1,\ldots,m$)
there exists a unique solution $\uf\in\projcompl\WSRper{1,2m}{p}(\R\times\R^n_+)$
to 
\begin{align}\label{MainThm_HalfSpace_Eq}
\begin{pdeq}
\partial_t\uf+\elop\uf&=\f &&\tin\R\times\R^n_+,\\
\bdop_j\uf&=\g_j && \ton\R\times\partial\R^n_+.
\end{pdeq}
\end{align}
Moreover, 
\begin{align}\label{MainThm_HalfSpace_Est}
\begin{aligned}
\norm{\uf}_{\WSRper{1,2m}{p}(\R\times\R^n_+)}
\leq \Ccn{C} 
\Bp{\norm{\f}_{\LRper{p}(\R\times\R^n_+)}+
\sum_{j=1}^m \norm{\g_j}_{\WSRper{\kappa_j,2m\kappa_j}{p}(\R\times\partial\R^n_+)}}
\end{aligned}
\end{align}
with $\Ccnlast{C}=\Ccnlast{C}(p,\per,n)$.
\end{thm}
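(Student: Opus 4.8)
The plan is to reduce \eqref{MainThm_HalfSpace_Eq} to a boundary value problem with vanishing interior right-hand side, to solve that problem by an explicit Fourier-multiplier formula obtained through a partial Fourier transform, and then to deduce the coercive estimate by a transference argument on the group $G:=\torus\times\R^{n-1}$. \emph{Reduction.} First I would extend $\f$ to $\Ff\in\projcompl\LRper{p}(\R\times\R^n)$ with $\norm{\Ff}_{\LRper p(\R\times\R^n)}\leq c\norm{\f}_{\LRper p(\R\times\halfspace)}$ and invoke the already-established whole-space case to produce $\uf_1\in\projcompl\WSRper{1,2m}{p}(\R\times\R^n)$ solving $\partial_t\uf_1+\elop\uf_1=\Ff$ with a corresponding bound. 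Setting $\wf:=\uf-\uf_1|_{\R\times\halfspace}$, the problem for $\wf$ reads $\partial_t\wf+\elop\wf=0$ in $\R\times\halfspace$ and $\bdop_j\wf=\g_j-\bdop_j\uf_1|_{x_n=0}=:\hf_j$ on $\R\times\partial\halfspace$, where $\hf_j\in\projcompl\WSRper{\kappa_j,2m\kappa_j}{p}(\R\times\partial\halfspace)$ by the trace theorem for the anisotropic time-periodic Sobolev spaces, with $\norm{\hf_j}\leq c\bp{\norm{\g_j}+\norm{\f}_{\LRper p}}$. It thus suffices to solve, and estimate, the homogeneous problem $\partial_t\wf+\elop\wf=0$, $\bdop_j\wf=\hf_j$.

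\emph{Explicit formula and uniqueness.} Identifying $\per$-periodic functions on $\R\times\halfspace$ with functions on $\torus\times\halfspace$, apply the Fourier transform $\FT_G$ in $(t,x')$ — a Fourier series in $t$ with dual variable $k\in\frac{2\pi}{\per}\Z$, and the usual transform in $x'$ with dual variable $\xi'$. Only modes $k\neq0$ occur. For each such $(k,\xi')$ the system becomes the ordinary differential boundary value problem on $x_n>0$,
\[
\bp{ik+\elop(\xi',D_n)}\vf=0\ \ (x_n>0),\qquad \bdop_j(\xi',D_n)\vf(0)=\widehat{\hf_j}(k,\xi')\ \ (j=1,\dots,m),
\]
with $\vf(x_n)\to0$ as $x_n\to\infty$. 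Here the symbol $\tau\mapsto ik+\elop(\xi',\tau)$ has no real zero — indeed $ik\in i\R\setminus\set{0}$, while Agmon's condition on the two rays $\theta=\pm\frac{\pi}{2}$ is exactly the statement that $\elop(\xi',\tau)\notin i\R$ for $(\xi',\tau)\neq0$, and for $\xi'=0$ the symbol equals $ik\neq0$ — so, by proper ellipticity and continuity of roots, it has exactly $m$ roots $\tau_h^+(k,\xi')$ in the open upper half-plane; this is precisely where the purely oscillatory projection is used, since for $k=0$ and $\xi'=0$ the symbol degenerates. The decaying solution is a combination of $\e^{i\tau_h^+x_n}$ (with the standard modification for repeated roots), and Agmon's complementing condition guarantees that the associated $m\times m$ matrix is invertible, with inverse controlled uniformly after the parabolic rescaling $(k,\xi')\mapsto(\lambda^{2m}k,\lambda\xi')$. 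Solving for the coefficients yields
\[
\wf=\FT_G^{-1}\Bb{\sum_{j=1}^m \Mmultiplier_j(k,\xi')\,\widehat{\hf_j}(k,\xi')},
\]
an operator-valued Fourier multiplier on $G$ taking values in operators from trace data to functions of $x_n\in(0,\infty)$; reassembling with the reduction step gives a candidate solution $\uf$. A completely analogous Fourier analysis shows that any $\uf\in\projcompl\WSRper{1,2m}{p}$ solving \eqref{MainThm_HalfSpace_Eq} with zero data satisfies the trivial homogeneous ODE boundary value problem for every $k\neq0$ (and a.e.\ $\xi'$), hence vanishes, which proves uniqueness.

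\emph{The estimate.} To bound $\norm{\uf}_{\WSRper{1,2m}{p}}$ one applies $\partial_t$ and $D_x^\alpha$, $|\alpha|\leq2m$, which under $\FT_G$ become multiplication by $ik$ and $\xi^\alpha$; thus one must control, for each such derivative, the norm of the resulting multiplier on $G$ as a map on $\LR p(G;\LR p(0,\infty))$. Using the parabolic homogeneity of the symbol together with the uniform bounds on the $\tau_h^+$ and on the inverse complementing matrix from the previous step, the relevant multipliers satisfy Mikhlin--Lizorkin-type derivative bounds on $\R\times\R^{n-1}$ — here one extends the discrete variable $k\in\frac{2\pi}{\per}\Z$ to all of $\R$, which is permissible at the level of multiplier norms — and the transference principle from abstract harmonic analysis transfers the ensuing $\LR p(\R\times\R^{n-1})$ bound to an $\LR p(\torus\times\R^{n-1})$ bound. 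Combined with the reduction this yields \eqref{MainThm_HalfSpace_Est}, with the constant depending only on $p$, $\per$, $n$.

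\emph{Main obstacle.} I expect the technical heart to be the estimate: one must show that the solution multipliers $\Mmultiplier_j$ and their $(k,\xi')$-derivatives — built from the roots $\tau_h^\pm(k,\xi')$, which depend only implicitly on $(k,\xi')$, and from the inverse of the complementing matrix — obey the derivative bounds required by the multiplier and transference theorems, uniformly and with the correct parabolic scaling. Establishing the requisite smoothness and growth of these roots and of the inverse matrix on the parabolic unit sphere, and then propagating the bounds by homogeneity, is where the real work lies; by contrast the reduction, the construction of the explicit formula, and uniqueness are comparatively routine once the whole-space theory and the trace theorem are in hand.
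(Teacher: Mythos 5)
Your overall architecture (reduce to vanishing interior data, construct a Poisson-type solution operator by partial Fourier transform in $(t,x')$, invert the complementing matrix, and estimate via parabolic homogeneity plus transference) is the classical ADN/parameter-elliptic route, and your root-counting and your reading of Agmon's condition on the rays $\theta=\pm\frac{\pi}{2}$ are correct. However, the step you defer as ``the main obstacle'' hides a genuine gap rather than routine work: the operator you must estimate is \emph{not} a scalar Fourier multiplier on $\torus\times\R^{n-1}$, so the chain ``Mikhlin--Lizorkin bounds $\Rightarrow$ $\LR{p}(\R\times\R^{n-1})$ bound $\Rightarrow$ transference'' does not apply to it. Your $\Mmultiplier_j(k,\xi')$ sends a scalar (the transformed boundary datum) to a function of $x_n\in(0,\infty)$; a scalar multiplier theorem applied for each fixed $x_n$ would at best give $\sup_{x_n}\norm{\wf(\cdot,\cdot,x_n)}_{\LR{p}}\leq c\,\norm{\hf_j}_{\LR{p}}$, which is neither the $\LR{p}$ norm over $\torus\times\halfspace$ nor compatible with data measured in the fractional trace norm $\WSRper{\kappa_j,2m\kappa_j}{p}$. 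Closing this requires an extra device --- the Volevich trick (writing $\wf(x_n)=-\int_0^\infty\partial_{y_n}\bb{\cdots}\,dy_n$ against an extension of $\hf_j$ into the half-space, which does reduce matters to scalar multipliers), explicit Poisson-kernel estimates as in the original ADN paper, or an operator-valued multiplier theorem with $\calr$-bounds --- none of which appears in your outline, and the last of which the paper explicitly sets out to avoid. Your uniqueness argument has a smaller, fixable issue of the same flavour: applying the partial Fourier transform ``for a.e.\ $\xi'$'' to an $\LR{p}$ solution needs justification.

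The paper circumvents all of this by a different decomposition. It first solves the \emph{Dirichlet} problem with nonzero interior data by factoring the symbol, $\parop=\paropp\paropm$, according to the location of the roots in $\C_\pm$, and using the Paley--Wiener theorem to show that $\paroppm$ and $\paroppminv$ preserve supports in the closed upper/lower half-spaces; this yields the closed formula $\uf=\paroppinv\charplus\paropminv\f$, in which every factor is a scalar multiplier on the full group $\torus\times\R^n$, so transference and Marcinkiewicz apply directly, and uniqueness also falls out of the support argument. General boundary operators are then reduced to the Dirichlet case by composing with $\op\bb{\invcharmatrix}$, the inverse of the characteristic matrix, which is a finite matrix of scalar multipliers acting purely between trace spaces on $\torus\times\R^{n-1}$ --- the one place where your homogeneity considerations genuinely do suffice. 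As written, the decisive estimate in your proposal is asserted rather than reduced to any theorem you have available.
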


Our proof of Theorem \ref{MainThm_HalfSpace} contains two results that are interesting in their own right. Firstly, we 
establish a similar assertion in the whole-space case. Secondly, we provide an explicit formula for the solution; see \eqref{weak_sol_lem_repformula} below.  
Moreover, our proof is carried out fully in a setting of time-periodic functions and follows an argument adopted from 
the elliptic case. This is remarkable in view of the fact that analysis of time-periodic problems in existing literature 
typically is based on theory for the corresponding initial-value problem; see for example \cite{Lieberman99}. A novelty of our approach is the introduction of suitable tools from abstract harmonic analysis that
allow us to give a constructive proof and avoid completely the classical indirect characterizations of time-periodic solutions as
fixed points of a Poincar\'{e} map, that is, as 
special solutions to the corresponding initial-value problem. The circumvention of the initial-value problem 
also enables us to avoid having to assume Agmon's condition for all $\snorm{\theta}\geq\frac{\pi}{2}$ and instead carry out our
investigation under the the weaker condition that Agmon's condition is satisfied only for $\theta=\pm\frac{\pi}{2}$.

We shall briefly describe the main ideas behind the proof of Theorem \ref{MainThm_HalfSpace}.
We first consider the problem in the whole space $\R\times\R^n$ and replace the time axis $\R$ with the torus $\torus:=\R/\per\Z$
in order to reformulate the $\per$-time-periodic problem as a partial differential equation on the locally compact abelian group $\grp:=\torus\times\R^n$.
Utilizing the Fourier transform $\FT_\grp$ associated to $\grp$, we obtain an explicit representation formula for the time-periodic solution.
Since $\FT_\grp=\FT_\torus\circ\FT_{\R^n}$, this formula simply corresponds to a Fourier series expansion in time of the solution and subsequent Fourier
transform in space of all its Fourier coefficients. 
While it is relatively easy to obtain $\LR{p}$ estimates (in space) for each Fourier coefficient separately, 
it is highly non-trivial to deduce from these individual estimates an $\LR{p}$ estimate in space \emph{and} time via the corresponding Fourier series.  
Instead, we turn to the representation formula given in terms of $\FT_\grp$
and show that the corresponding Fourier multiplier defined on the dual group $\dualgrp$ is an $\LR{p}(\grp)$ multiplier. For this purpose, we use the so-called \emph{Transference Principle} for Fourier multipliers in a group setting, and obtain the necessary estimate in the whole-space case. 
In the half-space case, \textsc{Peetre} \cite{Peetre61} and \textsc{Arkeryd} \cite{Arkeryd67} utilized the Paley-Wiener Theorem 
in order to construct a representation formula for solutions to elliptic problems; see also \cite[Section 5.3]{Triebel_InterpolationTheory1978}. We adapt their ideas to our setting and establish $\LR{p}$ estimates from the ones already obtained in the whole-space case. 
  
Theorem \ref{MainThm_HalfSpace} can be reformulated as the assertion that the operator
\begin{multline*}
(\partial_t+\elop,\bdop_1,\ldots,\bdop_m):\\
\projcompl\WSRper{1,2m}{p}(\R\times\R^n_+)\ra\projcompl\LRper{p}(\R\times\halfspace)\times
\Pi_{j=1}^m \projcompl\WSRper{\kappa_j,2m\kappa_j}{p}(\R\times\partial\halfspace) 
\end{multline*}
is a homeomorphism. 
By a standard localization and perturbation argument,
a purely periodic version of the celebrated theorem of  
\textsc{Agmon}, \textsc{Douglis} and \textsc{Nirenberg} \cite{ADN1} in the general case of operators with variable coefficients and $\Omega$ being 
a sufficiently smooth domain follows.
In fact, combining the classical result \cite{ADN1} for the elliptic case with Theorem 
\ref{MainThm_HalfSpace}, we obtain the following time-periodic version of the Agmon-Douglis-Nirenberg Theorem: 

\begin{thm}[Time-Periodic ADN Theorem]\label{MainThm_TPADN}
Let $p\in (1,\infty)$, $\per>0$, $n\geq 2$ and $\Omega$ be a domain with a boundary that is uniformly $\CR{2m}$-smooth.
Assume $a_\alpha$ is bounded and uniformly continuous on $\overline{\Omega}$ for $\snorm{\alpha}=2m$, 
and $a_\alpha\in\LR{\infty}(\Omega)$ for $\snorm{\alpha}<2m$.  
Further assume $b_{j,\beta}\in\CR{2m-m_j}(\partial\Omega)$ with bounded and uniformly 
continuous derivatives up to the order {$2m-m_j$}. 
If $\elop$ is properly elliptic 
and $(\elop,\bdop_1,\ldots,\bdop_m)$ satisfies Agmon's complementing condition on the two rays $\e^{i\theta}$ with $\theta=\pm\frac{\pi}{2}$, 
then the estimate
\begin{align}\label{TPADNEst}
\begin{aligned}
&\norm{\uf}_{\WSRper{1,2m}{p}(\R\times\Omega)}\\
&\qquad\qquad\leq \Ccn{C} \Bp{\norm{\partial_t\uf + A\uf}_{\LRper{p}(\R\times\Omega)}+\sum_{j=1}^m 
\norm{\bdopfull_j\uf}_{\WSRper{\kappa_j,2m\kappa_j}{p}(\R\times\partial\Omega)}+ \norm{u}_{\LRper{p}(\R\times\Omega)}}
\end{aligned}
\end{align}
holds for all $\uf\in\WSRper{1,2m}{p}(\R\times\Omega)$, where $\kappa_j:=1-\frac{m_j}{2m}-\frac{1}{2mp}$ ($j=1,\ldots,m$). 
\end{thm}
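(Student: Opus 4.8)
The plan is to combine the classical elliptic estimate of \textsc{Agmon}, \textsc{Douglis} and \textsc{Nirenberg} \cite{ADN1} for the time-averaged part of $u$ with Theorem \ref{MainThm_HalfSpace} and its whole-space counterpart for the purely oscillatory part, the passage from constant coefficients on flat domains to variable coefficients on $\Omega$ being carried out by a standard localization and perturbation argument. Observe first that only the a priori estimate \eqref{TPADNEst} is asserted; no solvability has to be shown. Write $u=\proj u+\projcompl u=:v+w$ and put $f:=\partial_t u+\elopfull u$ and $g_j:=\bdopfull_j u$. Since the coefficients $a_\alpha$ and $b_{j,\alpha}$ depend only on $x$, the average $\proj$ commutes with $\elopfull$ and with every $\bdopfull_j$, and $\proj\partial_t u=0$ by $\per$-periodicity; hence $v$ is independent of $t$ and solves $\elopfull v=\proj f$ in $\Omega$, $\bdopfull_j v=\proj g_j$ on $\partial\Omega$. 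Because Definition \ref{TPADN_AgmonCondAB}(ii) includes the value $r=0$, Agmon's complementing condition on any single ray already forces $(\elop,\bdop_1,\ldots,\bdop_m)$ to be elliptic in the sense of Agmon--Douglis--Nirenberg, and since $2m\kappa_j=2m-m_j-\tfrac1p$ the stationary trace spaces coincide with those of \cite{ADN1}. The classical a priori estimate therefore applies to $v$ and, using $\norm{v}_{\WSRper{1,2m}{p}(\R\times\Omega)}=\per^{1/p}\norm{v}_{\WSR{2m}{p}(\Omega)}$ together with the boundedness of $\proj$ on $\LRper{p}$ and on the trace spaces, yields a bound for $\norm{v}_{\WSRper{1,2m}{p}(\R\times\Omega)}$ dominated by the right-hand side of \eqref{TPADNEst}. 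It thus remains to estimate $w=\projcompl u$, which solves $\partial_t w+\elopfull w=\projcompl f$ in $\R\times\Omega$ and $\bdopfull_j w=\projcompl g_j$ on $\R\times\partial\Omega$.

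For $w$ I would freeze coefficients and localize. Using the uniform $\CR{2m}$-smoothness of $\partial\Omega$ and the uniform continuity of the leading coefficients, fix a small radius $\varrho>0$ and cover $\overline\Omega$ by balls $B_k$ of radius $\varrho$, each either interior or centred on $\partial\Omega$, with a subordinate, locally finite partition of unity $\set{\chi_k}$ of bounded overlap and uniformly bounded derivatives. On an interior patch, $\chi_k w$ solves, on $\R\times\R^n$,
\begin{equation*}
\bp{\partial_t+\elop_0}(\chi_k w)=\chi_k\projcompl f+\chi_k(\elop_0-\elopfull)w+\nb{\elop_0,\chi_k}w,
\end{equation*}
where $\elop_0$ denotes the principal part of $\elop$ with coefficients frozen at the centre of $B_k$; on a boundary patch, after a $\CR{2m}$-diffeomorphism flattening $\partial\Omega$ near $B_k$ and freezing the transformed leading coefficients at the image of its centre --- operations that preserve proper ellipticity and Agmon's complementing condition, both being linear-algebraic properties of the principal symbol --- one obtains the analogous equation on $\R\times\R^n_+$ together with
\begin{equation*}
\bdop_{j,0}(\chi_k w)=\chi_k\projcompl g_j+\chi_k(\bdop_{j,0}-\bdopfull_j)w+\nb{\bdop_{j,0},\chi_k}w\quad\ton\R\times\partial\R^n_+.
\end{equation*}
Since $\chi_k$ is independent of $t$ we have $\proj(\chi_k w)=0$, so $\chi_k w$ lies in the purely oscillatory subspace, and, $\proj$ commuting also with the frozen operators, so do all the right-hand sides above. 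Hence Theorem \ref{MainThm_HalfSpace} applies on the boundary patches and its whole-space counterpart (established in the course of its proof) on the interior ones, producing a $\WSRper{1,2m}{p}$-estimate for each $\chi_k w$.

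The commutators $\nb{\elop_0,\chi_k}$, $\nb{\bdop_{j,0},\chi_k}$ and the lower-order parts of $\elop_0-\elopfull$, $\bdop_{j,0}-\bdopfull_j$ have order $<2m$, respectively $<m_j$; no time commutators arise because $\chi_k$ does not depend on $t$. They contribute terms bounded, on $\supp\chi_k$, by $C_\varrho\norm{w}_{\LRper{p}(\R;\WSR{2m-1}{p}(\Omega))}$ in the interior case and, through the trace theorem and interpolation on the Sobolev scale, by $\delta\norm{w}_{\WSRper{1,2m}{p}}+C_\delta\norm{w}_{\LRper{p}}$ on the boundary. The top-order parts of $\elop_0-\elopfull$ and $\bdop_{j,0}-\bdopfull_j$ have coefficient oscillation $\leq\eps(\varrho)$ on $\supp\chi_k$, with $\eps(\varrho)\to0$ as $\varrho\to0^+$, and contribute $C\eps(\varrho)\norm{\chi_k w}_{\WSRper{1,2m}{p}}$. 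Raising the local estimates to the $p$-th power, summing over $k$, and invoking the bounded overlap together with $\norm{w}_{\WSRper{1,2m}{p}}^p\leq C\sum_k\norm{\chi_k w}_{\WSRper{1,2m}{p}}^p$, we obtain
\begin{multline*}
\norm{w}_{\WSRper{1,2m}{p}(\R\times\Omega)}\leq C\eps(\varrho)\,\norm{w}_{\WSRper{1,2m}{p}(\R\times\Omega)}\\
+C_\varrho\Bp{\norm{f}_{\LRper{p}(\R\times\Omega)}+\sum_{j=1}^m\norm{g_j}_{\WSRper{\kappa_j,2m\kappa_j}{p}(\R\times\partial\Omega)}+\norm{w}_{\LRper{p}(\R;\WSR{2m-1}{p}(\Omega))}}.
\end{multline*}
Choosing $\varrho$ so small that $C\eps(\varrho)\leq\tfrac12$ absorbs the first term, and the Ehrling-type inequality $\norm{w}_{\LRper{p}(\R;\WSR{2m-1}{p}(\Omega))}\leq\delta\norm{w}_{\WSRper{1,2m}{p}}+C_\delta\norm{w}_{\LRper{p}}$ --- applied spatially, uniformly in time --- followed by a further absorption eliminates the remaining lower-order term. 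Together with the estimate for $v$ and $\norm{v}_{\LRper{p}}+\norm{w}_{\LRper{p}}\leq C\norm{u}_{\LRper{p}}$, this yields \eqref{TPADNEst}.

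The main difficulty I anticipate is essentially one of careful bookkeeping over a possibly unbounded $\Omega$: keeping every constant --- from the flattening diffeomorphisms, from the patchwise applications of Theorem \ref{MainThm_HalfSpace}, and from the commutator, multiplication and interpolation estimates --- uniform in the patch index $k$, which is exactly what the uniform $\CR{2m}$-smoothness of $\partial\Omega$ and the uniform continuity of the leading coefficients are designed to guarantee, and checking that localization and coordinate flattening interact correctly with the anisotropic time-periodic trace spaces $\WSRper{\kappa_j,2m\kappa_j}{p}(\R\times\partial\Omega)$.
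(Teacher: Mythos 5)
Your argument is correct and is precisely the route the paper takes: the paper's proof of Theorem \ref{MainThm_TPADN} consists only of the remark that the result ``follows from Theorem \ref{MainThm_HalfSpace} by a standard localization and perturbation argument'' as in \cite{ADN1} and \cite[Chapter 4.8]{TanabeBook}, and your write-up (splitting $\uf=\proj\uf+\projcompl\uf$, applying the classical ADN estimate to the steady part and Theorem \ref{MainThm_HalfSpace} together with its whole-space analogue to the purely oscillatory part after flattening, freezing coefficients, and absorbing the commutator and oscillation errors) is a faithful and correctly executed expansion of exactly that argument. The only points you leave schematic --- uniformity of constants over the patches and the product estimates in the anisotropic trace spaces $\WSRper{\kappa_j,2m\kappa_j}{p}$ --- are the same ones the paper delegates to the cited references.
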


Since time-independent functions are trivially also time-periodic, we have $\WSR{2m}{p}(\Omega)\subset\WSRper{1,2m}{p}(\R\times\Omega)$.
If estimate \eqref{TPADNEst} is restricted to functions in $\WSR{2m}{p}(\Omega)$, Theorem \ref{MainThm_TPADN} reduces to the classical theorem of Agmon-Douglis-Nirenberg \cite{ADN1}, 
which has played a fundamental role in the analysis of  
elliptic boundary value problems for more then half a century now. This classical theorem for scalar equations was extended to systems in \cite{ADN2}. We shall only treat scalar equations in the following, but we believe the method developed here could be extended to include systems.

Time-periodic problems of parabolic type have been investigated in numerous articles over the years, and it would be too far-reaching to 
list them all here. We mention only the article of \textsc{Liebermann} \cite{Lieberman99}, the recent article by \textsc{Geissert}, \textsc{Hieber} and \textsc{Nguyen} \cite{GeissertHieberNguyen16}, as well as the monographs \cite{HessBook91,VejvodaBook82}, and refer the reader to the references therein.
Finally, we mention the article \cite{KyedSauer17} by the present authors in which some of the ideas utilized in the following were introduced in a much simpler setting.

\begin{rem}\label{Intr_Rem}
The half-space case treated in Theorem \ref{MainThm_HalfSpace} 
is also pivotal in the $\LR{p}$ theory
for parabolic initial-value problems.
Denote by $\elop_\bdopfull$ the realization of the operator $\elop(D)$ in $\LR{p}(\R^n_+)$ with domain
\begin{align*}
 D(\elop_\bdopfull):=\setc{\uf\in \WSR{2m}{p}(\R^n_+)}{\bdop_j(D)\uf=0, \quad j=1,\ldots,m}.
\end{align*}
Maximal regularity for parabolic initial-value problems of Agmon-Douglis-Niren\-berg type
is based on an investigation of the initial-value problem  
\begin{align}\label{IVP}
\begin{pdeq}
 \partial_t\uf+\elop_\bdopfull\uf&=\f, &&t>0, \\
 \uf(0)&=0.
\end{pdeq}
\end{align}
Maximal regularity for \eqref{IVP} means that 
for each function $\f\in \LR{p}(0,\per;\LR{p}(\R^n_+))$ there is a unique solution $\uf\in \LR{p}(0,\per;D(\elop_\bdopfull))\cap \WSR{1}{p}(0,\per;\LR{p}(\R^n_+))$ which satisfies the estimate
\begin{align}\label{IVP_Est}
 \|\uf,\partial_t\uf, D^{2m}\uf\|_{\LR{p}(0,\per;\LR{p}(\R^n_+))}\leq c\|\f\|_{\LR{p}(0,\per;\LR{p}(\R^n_+))}.
\end{align}
We shall briefly sketch how to obtain maximal regularity for \eqref{IVP} from Theorem \ref{MainThm_HalfSpace}.
For this purpose, it is required that 
$-\elop_\bdopfull$ generates an analytic semi-group
$\set{\e^{-t\elop_\bdopfull}}_{t>0}$, which follows from resolvent estimates going back 
to \textsc{Agmon} \cite[Theorem 2.1]{Agmon62} derived under the assumption that 
$(\elop,\bdop_1,\ldots,\bdop_m)$ satisfies Agmon's complementing condition for all rays with $\snorm{\theta}\geq\frac{\pi}{2}$; see also 
\cite[Theorem 5.5]{TanabeBook}. We would like to point out that these resolvent estimates can also be established with the arguments in our proof of Theorem \ref{MainThm_HalfSpace}. One can periodically extend any 
$\f\in \LR{p}(0,\per;\LR{p}(\R^n_+))$ to a $\per$-periodic function 
$\f\in\LRper{p}(\R\times\halfspace)$. With $\uf$ denoting the solution from Theorem \ref{MainThm_HalfSpace} corresponding to $\projcompl\f$, the function
\begin{align}\label{Intr_Rem_IVPSol}
\tilde\uf := \uf + \int_0^t \e^{-(t-s)\elop_\bdopfull}\proj\f\,\ds - \e^{-t\elop_\bdopfull}\uf(0) 
\end{align} 
is the unique solution to \eqref{IVP}. The desired $\LR{p}$ estimates of $\uf$ follow from Theorem \ref{MainThm_HalfSpace}, while estimates of the two latter terms on the right-hand side in \eqref{Intr_Rem_IVPSol} follow by 
standard theory for analytic semi-groups; see for example \cite[Theorem 4.3.1]{LunardiBookSemigroups}. 
For more details, see also \cite[Theorem 5.1]{MaS17}. The connection between 
maximal regularity for parabolic initial-value problems and  
corresponding time-periodic problems was observed for the first time in the work of \textsc{Arendt} and \textsc{Bu} \cite[Theorem 5.1]{ArendtBu2002}. 
\end{rem}

\section{Preliminaries and Notation}\label{pre}

\subsection{Notation}

Unless otherwise indicated, $x$ denotes an element in $\R^n$ and $x':=(x_1,\ldots,x_{n-1})\in\R^{n-1}$.
The same notation is employed for $\xi\in\R^n$ and $\xi':=(\xi_1,\ldots,\xi_{n-1})\in\R^{n-1}$.

We denote by $\C_+:=\setc{z\in\C}{\impart(z)>0}$ and $\C_-:=\setc{z\in\C}{\impart(z)<0}$  the upper and lower complex plane, respectively.

The notation $\partial_j:=\partial_{x_j}$ is employed for partial derivatives with respect to spatial variables. Throughout, 
$\partial_t$ shall denote the partial derivative with respect to the time variable.
For a multi-index $\alpha\in\N^{n}$, we employ the notation 
$D^\alpha:=i^{|\alpha|}\partial_{x_1}^{\alpha_1}\ldots\partial_{x_n}^{\alpha_n}$.

We introduce the parabolic length 
\begin{align*}
\forall (\eta,\xi)\in\R\times\R^n:\quad \parnorm{\eta}{\xi}:=(|\eta|^2+|\xi|^{4m})^{\frac{1}{4m}}.
\end{align*}
We call a generic function $g$ \emph{parabolically $\alpha$-homogeneous} if $\lambda^\alpha g(\eta,\xi) = g(\lambda^{2m}\eta,\lambda\xi)$ for all $\lambda>0$.

\subsection{Paley-Wiener Theorem}

\begin{defn}\label{hardy_def}
The Hardy space $\Hardy_+^2\np{\R}$ consists of all functions $\f\in\LR{2}\np{\R}$ admitting a holomorphic extension to the lower complex plane 
$\tilde\f:\C_-\ra\C$ with
\begin{align*}
\sup_{y<0}\myint{\R}{|\tilde\f(x+iy)|^2}{x}<\infty,\quad \lim_{y\to 0-}\myint{\R}{|\tilde\f(x+iy)-f(x)|^2}{x}=0.
\end{align*}
The Hardy space $\Hardy_-^2\np{\R}$ consists of all functions $\f\in\LR{2}\np{\R}$ admitting a similar holomorphic extension to the upper complex plane. 
\end{defn}

\begin{prop}[Paley-Wiener Theorem]\label{paley_wiener_prop}
Let $\f\in \LR{2}\np{\R}$. 
Then $\supp f\subset \R_+$ if and only if $\ft{f}\in\Hardy_+^2$. 
Moreover, $\supp f\subset \R_-$ if and only if $\ft{f}\in\Hardy_-^2$.
\end{prop}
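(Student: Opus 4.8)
The plan is to prove the first equivalence and then obtain the second one from it by the reflection $f(x)\mapsto f(-x)$: this interchanges $\R_+$ and $\R_-$, while on the Fourier side $\widehat{f(-\,\cdot\,)}(\xi)=\ft f(-\xi)$ and the map $z\mapsto-z$ carries $\C_-$ onto $\C_+$ and the defining conditions of $\Hardy_+^2$ into those of $\Hardy_-^2$; hence $\supp f\subset\R_-\iff\supp f(-\,\cdot\,)\subset\R_+\iff\widehat{f(-\,\cdot\,)}\in\Hardy_+^2\iff\ft f\in\Hardy_-^2$.

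For the implication $\supp f\subset\R_+\Rightarrow\ft f\in\Hardy_+^2$, I would set $\ti f(z):=\int_0^\infty f(x)\,\e^{-2\pi i x z}\,\dx$ for $z=\xi+iy$ with $y<0$. The integrand lies in $\LR1(0,\infty)$ by Cauchy--Schwarz, since $x\mapsto\e^{2\pi x y}$ is square-integrable on $(0,\infty)$ for $y<0$, so $\ti f$ is well defined, and it is holomorphic on $\C_-$ by Morera's theorem together with Fubini. For fixed $y<0$ one has $\ti f(\cdot+iy)=\ft{g_y}$ with $g_y(x):=\e^{2\pi x y}f(x)$ on $(0,\infty)$ and $g_y:=0$ on $(-\infty,0)$, so Plancherel yields $\norm{\ti f(\cdot+iy)}_{\LR2}=\norm{g_y}_{\LR2}\le\norm{f}_{\LR2}$ uniformly in $y<0$; moreover $g_y\to f$ in $\LR2(\R)$ as $y\to0-$ by dominated convergence (using $0<\e^{2\pi x y}\le1$ for $x>0$, $y<0$), whence $\ti f(\cdot+iy)\to\ft f$ in $\LR2(\R)$. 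This verifies the two conditions of Definition \ref{hardy_def}.

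For the converse, let $F\colon\C_-\to\C$ be the holomorphic extension of $\ft f$ with $M:=\sup_{y<0}\norm{F(\cdot+iy)}_{\LR2}^2<\infty$. Fix $\psi\in\CRci((-\infty,0))$, say $\supp\psi\subset[-A,-\eps]$ with $0<\eps<A$, and put $\Psi(z):=\overline{\ft\psi(\bar z)}$; this is entire, agrees with $\overline{\ft\psi}$ on $\R$, and $\Psi(\cdot+iy)=\overline{\widehat{\e^{-2\pi(\cdot)y}\psi}}$ satisfies $\norm{\Psi(\cdot+iy)}_{\LR2}\le\e^{2\pi\eps y}\norm{\psi}_{\LR2}$. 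Applying Cauchy's theorem to the holomorphic function $F\Psi$ on the rectangle $[-R,R]\times[a,b]$ for $a<b<0$ and letting $R\to\infty$ along a sequence killing the two vertical sides — such a sequence exists because $\int_a^b\!\int_\R|F(\xi+it)\Psi(\xi+it)|\,\dx\,\dt\le(b-a)\,M^{1/2}\norm{\psi}_{\LR2}<\infty$ by Cauchy--Schwarz and Fubini — I would conclude that $I(y):=\int_\R F(\xi+iy)\Psi(\xi+iy)\,\dx$ is independent of $y\in(-\infty,0)$. Sending $y\to0-$ and using the $\LR2$-convergences $F(\cdot+iy)\to\ft f$ and $\Psi(\cdot+iy)\to\overline{\ft\psi}$ together with Plancherel identifies $I$ with $\int_\R f\,\overline{\psi}\,\dx$; sending $y\to-\infty$ and using $|I(y)|\le M^{1/2}\e^{2\pi\eps y}\norm{\psi}_{\LR2}\to0$ shows $I=0$. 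Hence $\int_\R f\,\overline{\psi}\,\dx=0$ for every $\psi\in\CRci((-\infty,0))$, so $f=0$ a.e. on $(-\infty,0)$, i.e. $\supp f\subset\R_+$.

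The first implication is routine; the technical heart is the contour shift in the converse. Since $F$ is controlled only in the $\LR2$-sense along each horizontal line and not pointwise, the decay of the vertical sides of the rectangle must be extracted from the averaged Fubini estimate above rather than from a pointwise bound — equivalently, one must see that $y\mapsto F(\cdot+iy)$ is a bounded holomorphic $\LR2(\R)$-valued function on $\C_-$ with $\LR2$-boundary datum $\ft f$, which is precisely what forces $I(y)$ to be constant. Everything else reduces to Plancherel and dominated convergence.
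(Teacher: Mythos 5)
Your proof is correct. The paper does not actually prove this proposition --- its ``proof'' is a citation of Yosida's textbook --- so there is no argument of the paper to compare against; what you have written is the standard complex-analytic proof of the $\Hardy^2$ Paley--Wiener theorem, and it is sound. The forward direction via Morera, Fubini, Plancherel and dominated convergence is routine, and you handle the two genuinely delicate points of the converse correctly: first, the entire function $\Psi(z)=\overline{\ft{\psi}(\bar z)}$ satisfies $\norm{\Psi(\cdot+iy)}_{\LR{2}}\leq\e^{2\pi\eps y}\norm{\psi}_{\LR{2}}$ precisely because $\supp\psi\subset[-A,-\eps]$ stays away from the origin, and this exponential decay is what forces $I(y)\to 0$ as $y\to-\infty$; second, since $F$ is controlled only in the $\LR{2}$ sense on horizontal lines, the vertical sides of the rectangle can only be killed along a subsequence $R_k\to\infty$ extracted from the integrated (Fubini) bound, which is exactly what you do. The reduction of the second equivalence to the first via $x\mapsto -x$ is also consistent with the paper's (slightly unusual-looking) convention that $\Hardy_+^2$ functions extend to the \emph{lower} half-plane: with the kernel $\e^{-ix\xi}$, support in $\R_+$ indeed yields decay for $\mathop{\mathrm{Im}}z<0$. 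The only cosmetic discrepancy is your normalization $\e^{-2\pi ix\xi}$ versus the paper's $\e^{-ix\xi}$; this affects nothing essential.
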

\begin{proof}
See for example \cite[Theorems VI.4.1 and VI.4.2]{Yos80}.
\end{proof}

\subsection{Time-periodic function spaces}

Let $\Omega\subset\wspace$ be a domain and
\begin{align*}
\CRciper(\R\times\Omega) := \setc{f\in\CRi(\R\times\Omega)}{f(t+\per,x)=f(t,x),\ f\in\CRci\bp{[0,\per]\times\Omega}}
\end{align*}
the space of smooth time-period functions with compact support in the spatial variable. Clearly,
\begin{align}
&\norm{f}_p := \Bp{\iper\int_0^\per\int_{\Omega}\snorm{f(t,x)}^p\,\dx\dt}^{\frac{1}{p}},\label{intro_defofLpNorm}\\ 
&\norm{f}_{1,2m,p} :=
\Bp{\norm{\partial_t f}_{p}^p +\sum_{0\leq \snorm{\alpha}\leq 2m} \norm{\partial_x^\alpha f}_{p}^p }^{\frac{1}{p}}\label{intro_defofSobNorm}
\end{align}
are norms on $\CRciper(\R\times\Omega)$. We define 
Lebesgue and anisotropic Sobolev spaces of time-periodic functions as completions
\begin{align*}
&\LRper{p}(\R\times\Omega):= \closure{\CRciper(\R\times\Omega)}{\norm{\cdot}_{p}},\quad
\WSRper{1,2m}{p}\bp{\R\times\Omega} := \closure{\CRciper(\R\times\overline{\Omega})}{\norm{\cdot}_{1,2m,p}}.
\end{align*}
One may identify 
\begin{align*}
\LRper{p}(\R\times\Omega) = \setc{\f\in\LRloc{p}(\R\times\overline{\Omega})}{\f(t+\per,x)=\f(t,x) \text{ for almost every } (x,t)}.
\end{align*}
On a similar note, one readily verifies that 
\begin{align*}
\WSRper{1,2m}{p}\np{\R\times\Omega}=\setc{\f\in\WSRloc{1,2m}{p}\np{\R\times\overline{\Omega}}}{\f(t+\per,x)=\f(t,x) \text{ for almost every } (x,t)},
\end{align*} 
provided $\Omega$ satisfies the segment condition.

We introduce anisotropic fractional order Sobolev spaces (Sobolev-Slobodecki\u{\i} spa\-ces) by real interpolation:
\begin{align*}
\WSRper{s,2ms}{p}(\R\times\Omega) = \bp{\LRper{p}(\R\times\Omega),\WSRper{1,2m}{p}\np{\R\times\Omega}}_{s,p}, \qquad s\in(0,1).
\end{align*}
For a $\CR{2m}$-smooth manifold $\Gamma\subset\R^n$, anisotropic Sobolev spaces $\WSRper{s,2ms}{p}(\R\times\Gamma)$ are defined in a similar manner. 
We can identify (see also Section \ref{gr} below) the trace space of $\WSRper{1,2m}{p}(\R\times\Omega)$ 
as $\WSRper{1-1/2mp,2m-1/p}{p}(\R\times\partial\Omega)$ in the sense that the trace operator maps the former continuously onto the latter.

\subsection{Function Spaces and the Torus Group Setting}\label{gr}

We shall further introduce a setting of function spaces in which the time axis $\R$ in the underlying domains is replaced with the torus $\torus:=\R/\per\Z$.
In such a setting, all functions are inherently $\per$-time-periodic. We shall therefore never have to verify
periodicity of functions \textit{a posteriori}, and it will always be clear in which sense the functions are periodic.

The setting of $\torus$-defined functions is formalized in terms of the canonical quotient mapping
$\quotientmap :\R\times\R^n \ra \torus \times \R^n,\ \quotientmap(t,x):=([t],x)$.
A differentiable structure on $\torus \times \R^n$ is inherited via the quotient mapping form $\R\times\R^n$. More specifically, 
for any domain $\Omega\subset\R^n$ we let
\begin{align*}
\CRi(\torus\times\Omega) := \setc{\uf:\torus\times\Omega\ra\C}{\uf\circ\quotientmap\in\CRi(\R\times\Omega)}
\end{align*}
and define for $\uf\in\CRi(\torus\times\Omega)$ derivatives by $\partial^\alpha\uf := \bp{\partial^\alpha \nb{\uf\circ\quotientmap}}\circ\quotientmap_{|[0,\per)\times\Omega}^{-1}$. We let
\begin{align*}
\CRci\np{\torus\times\Omega} := \setc{\uf\in\CRi(\torus\times\Omega)}{\supp\uf\text{ is compact}}
\end{align*}
denote the space of compactly supported smooth functions. Introducing the normalized Haar measure on $\torus$, we define norms $\norm{\cdot}_p$ and $\norm{\cdot}_{1,2m,p}$ on $\CRci\np{\torus\times\Omega}$ as in \eqref{intro_defofLpNorm}--\eqref{intro_defofSobNorm}.
The quotient mapping trivially respects derivatives and is isometric with respect to $\norm{\cdot}_p$ and $\norm{\cdot}_{1,2m,p}$. Letting
\begin{align*}
&\LR{p}(\torus\times\Omega):= \closure{\CRci(\torus\times\Omega)}{\norm{\cdot}_{p}},\quad
\WSR{1,2m}{p}\bp{\torus\times\Omega} := \closure{\CRci(\torus\times\overline{\Omega})}{\norm{\cdot}_{1,2m,p}},
\end{align*}
we thus obtain Lebesgue and Sobolev spaces that are isometrically isomorphic to the spaces 
$\LRper{p}(\R\times\Omega)$ and $\WSRper{1,2m}{p}\bp{\R\times\Omega}$, respectively.
Defining weak derivatives with respect to test functions $\CRci(\torus\times\Omega)$, one readily verifies that
\begin{align*}
\WSR{1,2m}{p}\bp{\torus\times\Omega} = \setc{\uf\in\LR{p}(\torus\times\Omega)}{\uf,\partial_t\uf,\partial_x^\alpha\uf\in\LR{p}(\torus\times\Omega)\text{ for all }\snorm{\alpha}\leq 2},
\end{align*}
provided $\Omega$ satisfies the segment property.

For $s\in(0,1)$, we define fractional ordered Sobolev spaces by real interpolation 
\begin{align*}
\WSR{s,2ms}{p}(\torus\times\Omega) = \bp{\LR{p}(\torus\times\Omega),\WSR{1,2m}{p}\np{\torus\times\Omega}}_{s,p},
\end{align*}
and thereby obtain spaces isometrically isomorphic to $\WSRper{s,2ms}{p}(\R\times\Omega)$.
In the half-space case, we clearly have 
\begin{align*}
\WSR{1,2m}{p}(\torus\times\R^n_+) = 
\LR{p}\bp{\R_+;\WSR{1,2m}{p}\np{\torus\times\R^{n-1}}}\cap 
\WSR{2m}{p}\bp{\R_+;\LR{p}\np{\torus\times\R^{n-1}}}.
\end{align*} 
Hence, for $l\in\N, l\leq 2m$ the trace operator
\begin{align}\label{DefOfTraceOpr}
\begin{aligned}
&\trace_l:\CRci(\torus\times\overline{\R^n_+})\ra\CRci(\torus\times{\R^{n-1}})^l,\\
&\trace_l(\uf)(t,x'):=\bp{\uf\np{t,x',0}, \partial_n\uf\np{t,x',0},\ldots,\partial_n^{l-1}\uf\np{t,x',0}}
\end{aligned}
\end{align} 
extends to a bounded operator 
\begin{align}\label{TraceOprMappingproperties}
\trace_l: \WSR{1,2m}{p}(\torus\times\R^n_+) \ra \prod_{j=0}^{l-1}\WSR{1-\frac{j}{2m}-\frac{1}{2mp},2m-j-\frac{1}{p}}{p}(\torus\times\R^{n-1}) 
\end{align}
that is onto; see for example \cite[Theorem 1.8.3]{Triebel_InterpolationTheory1978}. The existence of a bounded right inverse to 
$\trace_l$ can be shown by applying \cite[Theorem 2.9.1]{Triebel_InterpolationTheory1978}. 

We further introduce the operators
\begin{align}\label{intro_DefOfProjections}
\proj,\projcompl:\CRci(\torus\times\Omega)\ra\CRci(\torus\times\Omega),\quad \proj f := \int_\torus f(t,x)\,\dt,\quad \projcompl := \id - \proj,
\end{align}
which are clearly complementary projections. 
Since $\proj f$ is independent of the time variable $t\in\R$, we may at times treat $\proj f$ as a function of the space variable $x\in\Omega$ only. 
Both $\proj$ and $\projcompl$ extend to bounded operators on the Lebesgue space $\LR{p}(\torus\times\Omega)$ and Sobolev space
$\WSR{1,2m}{p}\bp{\torus\times\Omega}$. We employ the notation $\LRcompl{p}(\torus\times\Omega):=\projcompl\LR{p}(\torus\times\Omega)$
and $\WSRcompl{1,2m}{p}\bp{\torus\times\Omega}:=\projcompl\WSR{1,2m}{p}\bp{\torus\times\Omega}$ for the subspaces of $\projcompl$-invariant functions.
This notation is canonical extended to other spaces such interpolation spaces of Lebesgue and Sobolev spaces.  We sometimes refer to functions with $\f=\projcompl\f$
as \emph{purely oscillatory}.

Finally, we let 
\begin{align*}
\Dtracekonst_j  :=  1-\frac{j-1}{2m}-\frac{1}{2mp},\quad 
\Btracekonst_j  :=  1-\frac{m_j}{2m}-\frac{1}{2mp},\quad (j=1,\ldots,m)
\end{align*}
and put
\begin{align*}
&\tracespacecompl{\Dtracekonst}{p}(\torus\times\Omega):=\prod_{j=1}^m\WSRcompl{\Dtracekonst_j,2m\Dtracekonst_j}{p}(\torus\times\Omega),\quad \tracespacecompl{\Btracekonst}{p}(\torus\times\Omega):=\prod_{j=1}^m\WSRcompl{\Btracekonst_j,2m\Btracekonst_j}{p}(\torus\times\Omega).
\end{align*}

\subsection{Schwartz-Bruhat Spaces and Distributions}

When the spatial domain is the whole-space $\R^n$, we employ the notation ${\grp:=\torus\times\R^n}$. Equipped with the quotient topology via $\quotientmap$, $\grp$ becomes a locally compact abelian group. Clearly, the $\LR{p}(\grp)$ space corresponding to the Haar measure on $\grp$, appropriately normalized, coincides with the $\LR{p}\np{\torus\times\R^n}$ space introduced in the previous section.

We identify $\grp$'s dual group by $\dualgrp=\perf\Z\times\R^n$ by associating
$(k,\xi)\in\perf\Z\times\R^n$ with the character 
$\chi:\grp\ra\CNumbers,\ \chi(x,t):=\e^{ix\cdot\xi+ik t}$.
By default, $\dualgrp$ is equipped with the compact-open topology, which in this case coincides with the product of the 
discrete topology on $\perf\Z$ and the
Euclidean topology on $\R^n$.
The Haar measure on $\dualgrp$ is simply the product of the Lebesgue measure on $\R^n$ and the counting measure on $\perf\Z$. 

The Schwartz-Bruhat
space $\SR(\grp) $ of generalized Schwartz functions 
(originally introduced in \cite{Bruhat61}) can be described in terms of the semi-norms
\begin{align*}
\forall(\alpha,\beta,\gamma)\in\N_0\times\N_0^n\times\N_0^n:\quad 
\rho_{\alpha,\beta,\gamma}(\uf):=\sup_{(t,x)\in\grp} \snorm{x^\gamma\partial_t^\alpha\partial_x^\beta\uf(x,t)}
\end{align*}
as
\begin{align*}
\SR(\grp):=\setc{\uf\in\CRi(\grp)}{\forall(\alpha,\beta,\gamma)\in\N_0\times\N_0^n\times\N_0^n:\ \rho_{\alpha,\beta,\gamma}(\uf)<\infty}.
\end{align*}
The vector space $\SR(\grp)$ is endowed with the semi-norm 
topology.

The topological dual space $\TDR(\grp)$ of $\SR(\grp)$ is referred to as the space of tempered distributions on $\grp$. 
Observe that both $\SR(\grp)$ and $\TDR(\grp)$ remain closed under multiplication 
by smooth functions that have at most polynomial growth with respect to the spatial variables.
For a tempered distribution $\uf\in\TDR(\grp)$, distributional derivatives 
$\partial_t^\alpha\partial_x^\beta\uf\in\TDR(\grp)$ are defined by duality in the usual manner.
Also the support $\supp\uf$ is defined in the classical way. Moreover, we may restrict the distribution $\uf$ to a subdomain $\torus\times\Omega$ by
considering it as a functional defined only on the test functions from $\SR(\grp)$ supported in $\torus\times\Omega$.

A differentiable structure on $\dualgrp$ is obtained by introduction of the space 
\begin{align*}
\CRi(\dualgrp):=\setc{\wf\in\CR{}(\dualgrp)}{\forall k\in\perf\Z:\ \wf(k,\cdot)\in\CRi(\R^n)}.
\end{align*}
The Schwartz-Bruhat space on the dual group $\dualgrp$ is defined in terms of the semi-norms
\begin{align*}
\forall (\alpha,\beta,\gamma)\in\N_0\times\N_0^n\times\N_0^n:\ \dualrho_{\alpha,\beta,\gamma}(\wf):= 
\sup_{(\xi,k)\in\dualgrp} \snorm{k^\alpha \xi^\gamma \partial_\xi^\beta \wf(k,\xi)} 
\end{align*}
as
\begin{align*}
\begin{aligned} 
\SR(\dualgrp)&:=\setc{\wf\in\CRi(\dualgrp)}{\forall (\alpha,\beta,\gamma)\in\N_0\times\N_0^n\times\N_0^n:\ \dualrho_{\alpha,\beta,\gamma}(\wf)<\infty}.
\end{aligned}
\end{align*}
We also endow $\SR(\dualgrp)$ with the corresponding semi-norm topology and denote by $\TDR(\dualgrp)$ the topological dual space. 

\subsection{Fourier Transform}

As a locally compact abelian group, $\grp$ has a Fourier transform 
$\FT_{\grp}$ associated to it. The ability to utilize a Fourier transform that acts simultaneously in time $t\in\torus$ and space $x\in\R^n$ shall play a key role in the following.

The Fourier transform $\FT_\grp$ on $\grp$ is given by
\begin{align*}
\FT_\grp:\LR{1}(\grp)\ra\CR{}(\dualgrp),\quad \FT_\grp(\uf)(k,\xi):=\ft{\uf}(k,\xi):=
\int_\torus\int_{\R^n} \uf(t,x)\,\e^{-ix\cdot\xi-ik t}\,\dx\dt.
\end{align*}
If no confusion can arise, we simply write $\FT$ instead of $\FT_\grp$.
The inverse Fourier transform is formally defined by 
\begin{align*}
\iFT:\LR{1}(\dualgrp)\ra\CR{}(\grp),\quad \iFT(\wf)(t,x):=\ift{\wf}(t,x):=
\sum_{k\in\perf\Z}\,\int_{\R^n} \wf(k,\xi)\,\e^{ix\cdot\xi+ik t}\,\dxi.
\end{align*}
It is standard to verify that $\FT:\SR(\grp)\ra\SR(\dualgrp)$ is a homeomorphism with $\iFT$ as the actual inverse, provided the Lebesgue measure $\dxi$ is normalized appropriately. 
By duality, $\FT$ extends to a bijective mapping $\FT:\TDR(\grp)\ra\TDR(\dualgrp)$. 
The Fourier transform provides us with a calculus between the differential operators on $\grp$ and the 
polynomials on $\dualgrp$. As one easily verifies, for $\uf\in\TDR(\grp)$ and $(\alpha,\beta)\in\N_0\times\N_0^n$ we have
$\FT\bp{\partial_t^\alpha\partial_x^\beta\uf}=i^{\snorm{\alpha}+\snorm{\beta}}\,k^\alpha\,\xi^\beta\,\FT(\uf)$
as an identity in $\TDR(\dualgrp)$.

The projections introduced in \eqref{intro_DefOfProjections} can be extended trivially to projections on the Schwartz-Bruhat space 
$\proj,\projcompl:\SR(\grp)\ra\SR(\grp)$. 
Introducing the delta distribution $\delta_\Z$ on $\perf\Z$, that is, $\delta_\Z(k):=1$ if $k=0$ and $\delta_\Z(k):=0$ for $k\neq 0$,
we observe that 
$\proj\uf = \iFT_\grp\bb{\delta_\Z \FT_\grp\nb{\uf}}$ and 
$\projcompl\uf = \iFT_\grp\bb{(1-\delta_\Z) \FT_\grp\nb{\uf}}$.
Using these representations for $\proj$ and $\projcompl$, we naturally extend the projections to 
operators $\proj,\projcompl:\TDR(\grp)\ra\TDR(\grp)$. In accordance with the notation introduced above, we put
$\TDRcompl(\grp):=\projcompl\TDR(\grp)$.

In general, we shall utilize smooth functions $\multper\in\CRi(\dualgrp)$ with at most polynomial growth as Fourier multipliers by introducing 
the corresponding operator
\begin{align*}
\op[\multper]:\SR(\grp)\ra\TDR(\grp),\quad \op[\multper]\uf:=\FT^{-1}_\grp \bb{\multper\FT_\grp\nb{\uf}}.
\end{align*}  
We call $\multper$ an $\LR{p}(\grp)$-multiplier if $\op[\multper]$ extends to a bounded operator on $\LR{p}(\grp)$ for any $p\in(1,\infty)$.
The following lemmas provide us with criteria to determine if $\multper$ is an $\LR{p}(\grp)$-multiplier.

\begin{lem}\label{MultiplierLem_ZeroHomoMultipliers}
Let $\multper\in\CRi(\dualgrp)$. If $\multper=\multrestriction{\mult}{\dualgrp}$ for some parabolically $0$-homogeneous $\mult:\R\times\R^n\ra\C$, then $\op\nb{\multper}$ extends to a bounded operator $\op\nb{\multper}:\LR{p}(\grp)\ra\LR{p}(\grp)$. 
\end{lem}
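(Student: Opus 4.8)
The plan is to reduce the claim to a classical multiplier theorem on $\R\times\R^n$ by means of the Transference Principle for Fourier multipliers. The key point is that $\dualgrp=\frac{2\pi}{\per}\Z\times\R^n$ embeds in $\R\times\R^n$, and a sufficiently regular multiplier on the larger group restricts to an $\LR{p}$-multiplier on the subgroup. First I would verify that the function $\mult:\R\times\R^n\ra\C$, being parabolically $0$-homogeneous and smooth away from the origin (which I take to be part of what ``parabolically $0$-homogeneous'' tacitly requires, since otherwise the statement is false), satisfies the anisotropic Mikhlin--H\"ormander condition
\begin{align*}
\sup_{(\eta,\xi)\in\R\times\R^n\setminus\set{(0,0)}} \parnorm{\eta}{\xi}^{2m\snorm{\alpha_0}+\snorm{\alpha}}\,\snorm{\partial_\eta^{\alpha_0}\partial_\xi^{\alpha}\mult(\eta,\xi)}<\infty
\end{align*}
for all multi-indices $(\alpha_0,\alpha)$ up to some fixed finite order. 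This follows because differentiating a parabolically $0$-homogeneous function $\alpha_0$ times in $\eta$ and $\alpha$ times in $\xi$ produces a parabolically $(-2m\snorm{\alpha_0}-\snorm{\alpha})$-homogeneous function, which is bounded on the compact ``parabolic sphere'' $\set{\parnorm{\eta}{\xi}=1}$ by smoothness, and the homogeneity then forces the stated decay.

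Next I would invoke the anisotropic version of the Mikhlin multiplier theorem (the parabolic dilations $(\eta,\xi)\mapsto(\lambda^{2m}\eta,\lambda\xi)$ play the role of the usual dilations) to conclude that $\op\nb{\mult}$ extends to a bounded operator on $\LR{p}(\R\times\R^n)$ for every $p\in(1,\infty)$; alternatively, one may first use the ordinary (isotropic) Mikhlin theorem after noting that $0$-homogeneity with respect to the parabolic scaling still yields the isotropic H\"ormander bounds in the region $\snorm{\xi}\gtrsim 1$ and a separate elementary estimate near $\eta$-axis, but the anisotropic formulation is cleaner. Then I would apply the Transference Principle for Fourier multipliers between a group and a closed subgroup --- in the form relating multipliers on $\R\times\R^n$ to multipliers on the subgroup $\frac{2\pi}{\per}\Z\times\R^n$, equivalently on the quotient group $\torus\times\R^n=\grp$ --- to transfer the $\LR{p}(\R\times\R^n)$ boundedness of $\op\nb{\mult}$ to $\LR{p}(\grp)$ boundedness of $\op\nb{\multper}$, where $\multper=\multrestriction{\mult}{\dualgrp}$. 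This is exactly the strategy announced in the introduction (``we use the so-called \emph{Transference Principle} for Fourier multipliers in a group setting'').

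The main obstacle is a technical subtlety in the transference step: the restriction $\multrestriction{\mult}{\dualgrp}$ must be well-defined and the transference theorem applicable, which requires either continuity of $\mult$ on all of $\R\times\R^n\setminus\set{(0,0)}$ together with a consistent choice of value at the single lattice point $(0,0)\in\dualgrp$, or --- more to the point --- the observation that $\op\nb{\multper}$ only ever acts after the projection $\projcompl$ in the intended applications, so the value of $\multper$ at $k=0$ is immaterial. I would handle this by remarking that the hypothesis implicitly provides $\multper\in\CRi(\dualgrp)$ as the given object, so $\multper$ agrees with $\mult$ at every point of $\dualgrp$, including $(0,0)$; since $\mult$ is parabolically $0$-homogeneous it is in particular constant along parabolic rays, and one checks that the classical transference principle (e.g.\ in the formulation of de Leeuw, extended to general LCA groups) applies to the regularized multiplier $\mult\ast\varphi_\eps$ and passes to the limit, using the uniform Mikhlin bounds established in the first step to keep the operator norms controlled. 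The remaining details --- approximation, the precise citation for transference in the $\torus\times\R^n$ setting, and the passage $\eps\to 0$ --- are routine and I would relegate them to a reference.
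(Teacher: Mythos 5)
Your proposal follows essentially the same route as the paper: establish that the parabolically $0$-homogeneous symbol $\mult$ satisfies a classical Euclidean multiplier criterion (the paper cites Marcinkiewicz's theorem where you invoke the anisotropic Mikhlin condition, but the homogeneity argument delivering the derivative bounds is the same), and then transfer the $\LR{p}(\R\times\R^n)$ bound to $\LR{p}(\grp)$ via the de Leeuw/Edwards--Gaudry Transference Principle. The additional technical remarks you make about the value at the origin and regularization are not needed in the paper's argument but do not detract from the correctness of the approach.
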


\begin{proof}
The Transference Principle (established originally by de Leeuw \cite{dLe65} and later extended to a general setting of locally compact abelian groups
by Edwards and Gaudry \cite[Theorem B.2.1]{EdG77}),
makes it possible to ``transfer'' the investigation of Fourier multipliers from one group setting into another.
In our case, \cite[Theorem B.2.1]{EdG77} yields that $\multper$ is an $\LR{p}(\grp)$-multiplier, provided $\mult$ is an
$\LR{p}(\R\times\R^n)$-multiplier. To show the latter, we can employ one of the classical multiplier theorems available in 
the Euclidean setting. Since $\mult$ is parabolically $0$-homogeneous, it is easy to verify that $\mult$ meets for instance the conditions of the
Marcinkiewicz's multiplier theorem (\cite[Chapter IV, \S 6]{Stein70}). Thus, $\mult$ is an
$\LR{p}(\R\times\R^n)$-multiplier and by \cite[Theorem B.2.1]{EdG77} $\multper$ therefore an $\LR{p}(\grp)$-multiplier.  
\end{proof}

\begin{lem}\label{MultiplierLem_NegHomoMultipliers}
Let $\multper\in\CRi(\dualgrp\setminus\set{(0,0)})$ and $\alpha\leq 0$. If $\multper=\multrestriction{\mult}{\dualgrp}$ for some parabolically $\alpha$-homogeneous function $\mult:\R\times\R^n\setminus\set{(0,0)}\ra\C$, then $\op\nb{\multper}$ extends to a bounded operator $\op\nb{\multper}:\LRcompl{p}(\grp)\ra\LRcompl{p}(\grp)$. 
\end{lem}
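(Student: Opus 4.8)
The plan is to remove the singularity of $\mult$ at the origin by a cut-off and then argue essentially as in the proof of Lemma~\ref{MultiplierLem_ZeroHomoMultipliers}. This localization is harmless on $\LRcompl{p}(\grp)$ because the dual group $\dualgrp=\perf\Z\times\R^n$ has a gap in the time-frequency variable: every $(k,\xi)\in\dualgrp$ with $k\neq0$ satisfies $\snorm{k}\geq\perf$, whence $\parnorm{k}{\xi}\geq\rho_0:=(\perf)^{\frac1{2m}}>0$, while the Fourier transform of every purely oscillatory function is supported in $\setc{(k,\xi)\in\dualgrp}{k\neq0}$. The main obstacle is precisely to make this compatibility rigorous; the underlying multiplier estimates are routine and run parallel to those in Lemma~\ref{MultiplierLem_ZeroHomoMultipliers}.

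Concretely, I would fix $\cutoff\in\CRci(\R\times\R^n;[0,1])$ with $\cutoff\equiv1$ on $\setc{(\eta,\xi)}{\parnorm{\eta}{\xi}\leq\rho_0/2}$ and $\supp\cutoff\subset\setc{(\eta,\xi)}{\parnorm{\eta}{\xi}<\rho_0}$, and set $\tmult:=(1-\cutoff)\,\mult$, extended by $0$ at the origin. Since $\mult$ is smooth away from the origin and $\cutoff\equiv1$ near it, $\tmult\in\CRi(\R\times\R^n)$, and $\tmult=\mult$ on $\setc{(\eta,\xi)}{\parnorm{\eta}{\xi}\geq\rho_0}$. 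On $\supp(1-\cutoff)\subset\setc{(\eta,\xi)}{\parnorm{\eta}{\xi}\geq\rho_0/2}$ the derivative $\partial_\eta^a\partial_\xi^b\mult$ is parabolically $(\alpha-2ma-\snorm{b})$-homogeneous, hence $\snorm{\partial_\eta^a\partial_\xi^b\mult(\eta,\xi)}\leq C\parnorm{\eta}{\xi}^{\alpha-2ma-\snorm{b}}$ there; combining this with $\snorm{\eta}\leq\parnorm{\eta}{\xi}^{2m}$ and $\snorm{\xi_i}\leq\parnorm{\eta}{\xi}$ gives, for $a\in\set{0,1}$ and $b\in\set{0,1}^n$, the bound $\snorm{\eta^a\xi^b\,\partial_\eta^a\partial_\xi^b\mult}\leq C\parnorm{\eta}{\xi}^{\alpha}\leq C(\rho_0/2)^{\alpha}$ because $\alpha\leq0$, while the product-rule terms in which a derivative lands on $\cutoff$ are smooth with compact support, hence bounded. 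Thus $\tmult$ satisfies the hypotheses of Marcinkiewicz's multiplier theorem (\cite[Chapter~IV, \S6]{Stein70}) and is an $\LR{p}(\R\times\R^n)$-multiplier for every $p\in(1,\infty)$; by the Transference Principle \cite[Theorem~B.2.1]{EdG77}, $\multrestriction{\tmult}{\dualgrp}$ is an $\LR{p}(\grp)$-multiplier. Let $\OprT:=\op\nb{\multrestriction{\tmult}{\dualgrp}}$ denote the resulting bounded operator on $\LR{p}(\grp)$; since multiplying $\ft\f$ by $\multrestriction{\tmult}{\dualgrp}$ preserves the vanishing of $\ft\f$ on $\setc{(k,\xi)\in\dualgrp}{k=0}$, the operator $\OprT$ maps $\LRcompl{p}(\grp)$ into itself.

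It then remains to identify $\op\nb{\multper}$ with $\OprT$ on purely oscillatory functions. For $\f\in\projcompl\SR(\grp)$ the transform $\ft\f$ vanishes on $\setc{(k,\xi)\in\dualgrp}{k=0}$, so $\op\nb{\multper}\f:=\iFT\bb{\multper\ft\f}$ is well-defined, and since $\supp\ft\f\subset\setc{(k,\xi)\in\dualgrp}{\parnorm{k}{\xi}\geq\rho_0}$, on which $\multper=\mult=\tmult$, we obtain $\multper\ft\f=\multrestriction{\tmult}{\dualgrp}\ft\f$ and hence $\op\nb{\multper}\f=\OprT\f$. As $\projcompl\SR(\grp)$ is dense in $\LRcompl{p}(\grp)$ and $\OprT$ is bounded with $\OprT\bp{\LRcompl{p}(\grp)}\subset\LRcompl{p}(\grp)$, the operator $\op\nb{\multper}$ extends uniquely to a bounded operator $\LRcompl{p}(\grp)\to\LRcompl{p}(\grp)$, which is the assertion. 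The delicate point throughout — and the reason the statement is restricted to purely oscillatory functions — is that the cut-off alters the symbol only inside the gap $\setc{(k,\xi)\in\dualgrp}{k=0}$, where it is invisible to functions in $\LRcompl{p}(\grp)$.
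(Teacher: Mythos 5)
Your proof is correct and follows essentially the same route as the paper: truncate the symbol near the origin so that Marcinkiewicz's theorem applies, transfer to $\grp$ via \cite[Theorem B.2.1]{EdG77}, and observe that the modification is invisible to purely oscillatory functions because their Fourier transforms are supported in $\setc{(k,\xi)\in\dualgrp}{k\neq 0}$, where $\parnorm{k}{\xi}$ is bounded away from zero. The only (immaterial) difference is that the paper cuts off in the slab $\snorm{\eta}<\perf$ using a function of $\eta$ alone, so that the truncated symbol restricted to $\dualgrp$ is exactly $\multper(1-\delta_\Z)$, whereas you cut off a compact parabolic ball and instead argue via the support of $\ft{\f}$.
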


\begin{proof}
Let $\cutoff\in\CRi(\R)$ be a ``cut-off'' function with 
$\cutoff(\eta)=0$ for $\snorm{\eta}<\frac{\pi}{\per}$ and 
$\cutoff(\eta)=1$ for $\snorm{\eta}\geq \perf$. 
Put $\Mult(\eta,\xi):=\cutoff(\eta)\mult(\eta,\xi)$.
Utilizing that $\mult$ is $\alpha$-homogeneous and $\alpha\leq 0$, one readily verifies that $\Mult$ satisfies
the conditions of Marcinkiewicz's multiplier theorem (\cite[Chapter IV, \S 6]{Stein70}). Consequently, $\Mult$ is an $\LR{p}(\R\times\R^n)$-multiplier.
For $\uf\in\LRcompl{p}(\grp)$, we have $\uf=\projcompl\uf$ and thus
\begin{align*}
\op\nb{\multper}\np{\uf} 
= \FT^{-1}_\grp \bb{\multper\FT_\grp\nb{\projcompl\uf}}
= \FT^{-1}_\grp \bb{\multper(1-\delta_\Z)\FT_\grp\nb{\projcompl\uf}}.
\end{align*}
Since $\multper(1-\delta_\Z)=\multrestriction{\Mult}{\dualgrp}$, we obtain from \cite[Theorem B.2.1]{EdG77} that 
$\multper(1-\delta_\Z)$ is an $\LR{p}\np{\grp}$-multiplier. Consequently, $\op\nb{\multper}\np{\uf}\leq\Ccn{C}\norm{\uf}_p$ for all $\uf\in\LRcompl{p}(\grp)$.  
\end{proof}

\begin{cor}\label{MultiplierLem}
Let $p\in(1,\infty)$ and $\beta\in[0,1]$. If $\Mmultiplier\in\CRi(\perf\Z\setminus\set{0}\times\R^{n})$ is parabolically $0$-homogeneous, then   
$\op[M]$ extends to a bounded operator $\op[M]:\WSRcompl{\beta,2m\beta}{p}(\grp)\ra\WSRcompl{\beta,2m\beta}{p}(\grp)$.
\end{cor}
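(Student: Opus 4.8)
The plan is to prove the estimate at the two endpoints $\beta=0$ and $\beta=1$ directly, and then obtain the intermediate range $\beta\in(0,1)$ by real interpolation. For $\beta=0$ one has $\WSRcompl{0,0}{p}(\grp)=\LRcompl{p}(\grp)$, and I would deduce the assertion straight from Lemma~\ref{MultiplierLem_NegHomoMultipliers} applied with $\alpha=0$ (admissible since $0\leq 0$). The only bookkeeping point is that $M$ is prescribed solely on $\perf\Z\setminus\set{0}\times\R^{n}$, whereas the lemma wants a multiplier on $\dualgrp\setminus\set{(0,0)}$; but by hypothesis $M$ is the restriction of a parabolically $0$-homogeneous function, which may be taken smooth on all of $\R\times\R^n\setminus\set{(0,0)}$, and since $\op[M]$ will only ever be applied to purely oscillatory functions its values on $\set{0}\times\R^n$ do not affect the operator under consideration. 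Lemma~\ref{MultiplierLem_NegHomoMultipliers} thus yields a bounded operator $\op[M]:\LRcompl{p}(\grp)\ra\LRcompl{p}(\grp)$.

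For $\beta=1$ I would exploit that $\op[M]$ commutes with differentiation. Indeed, for $\uf\in\WSRcompl{1,2m}{p}(\grp)$ one has $\partial_t\op[M]\uf=\op[M]\partial_t\uf$ and $\partial_x^\alpha\op[M]\uf=\op[M]\partial_x^\alpha\uf$ for $\snorm{\alpha}\leq 2m$, both being immediate from the Fourier representation since multiplication by $M$ commutes with multiplication by $k$ and by $\xi^\alpha$. The derivatives appearing on the right are again purely oscillatory, because $\proj\partial_t\uf=\int_\torus\partial_t\uf\,\dt=0$ and $\proj\partial_x^\alpha\uf=\partial_x^\alpha\proj\uf=0$, so that $\partial_t\uf,\partial_x^\alpha\uf\in\LRcompl{p}(\grp)$. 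Applying the case $\beta=0$ to each of them and summing gives
\begin{align*}
\norm{\op[M]\uf}_{1,2m,p}^{p}
&=\norm{\op[M]\partial_t\uf}_{p}^{p}+\sum_{0\leq\snorm{\alpha}\leq 2m}\norm{\op[M]\partial_x^\alpha\uf}_{p}^{p}\\
&\leq\Ccn{C}^{p}\,\norm{\uf}_{1,2m,p}^{p},
\end{align*}
where $\Ccn{C}$ denotes the operator norm furnished by the case $\beta=0$. Since $\op[M]\uf=\projcompl\op[M]\uf$, this shows that $\op[M]$ maps $\WSRcompl{1,2m}{p}(\grp)$ boundedly into itself.

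For $\beta\in(0,1)$ I would invoke real interpolation. By definition $\WSR{\beta,2m\beta}{p}(\grp)=\bp{\LR{p}(\grp),\WSR{1,2m}{p}(\grp)}_{\beta,p}$, and since $\projcompl$ is a bounded projection on both endpoint spaces, it is compatible with the interpolation functor, whence $\WSRcompl{\beta,2m\beta}{p}(\grp)=\bp{\LRcompl{p}(\grp),\WSRcompl{1,2m}{p}(\grp)}_{\beta,p}$. As $\op[M]$ is bounded on each of $\LRcompl{p}(\grp)$ and $\WSRcompl{1,2m}{p}(\grp)$ by the two previous steps, the interpolation property delivers boundedness of $\op[M]$ on $\WSRcompl{\beta,2m\beta}{p}(\grp)$, which is the claim.

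I do not anticipate a genuine obstacle here: the corollary is in essence a bootstrap of Lemma~\ref{MultiplierLem_NegHomoMultipliers} up the anisotropic Sobolev scale. The two points deserving a line of care are the harmless indeterminacy of $M$ across $k=0$, which disappears once attention is restricted to purely oscillatory functions, and the compatibility of the complementary projection $\projcompl$ with real interpolation.
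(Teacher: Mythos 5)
Your argument is correct and is precisely the deduction the paper intends (the corollary is stated without proof there): boundedness on $\LRcompl{p}(\grp)$ via Lemma~\ref{MultiplierLem_NegHomoMultipliers} with $\alpha=0$, boundedness on $\WSRcompl{1,2m}{p}(\grp)$ by commuting $\op[M]$ with $\partial_t$ and $\partial_x^\alpha$, and the intermediate range by real interpolation, which is exactly how the spaces $\WSRcompl{\beta,2m\beta}{p}$ are defined. Your two side remarks --- that the indeterminacy of $M$ at $k=0$ is irrelevant on purely oscillatory functions, and that $\projcompl$ being a bounded projection on both endpoints makes the complemented subspaces interpolate --- are the right points to flag and are handled correctly.
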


\subsection{Time-Periodic Bessel Potential Spaces}

Time-periodic Bessel Potential spaces can be defined via the Fourier transform $\FT_\grp$. We shall only introduce Bessel Potential spaces of purely oscillatory distributions:
\begin{align*}
\HSRcompl{s}{p}:=\setc{\f\in\TDRcompl(\grp)}{\op\nb{\parnorm{k}{\xi}^s}{\f}\in\LR{p}\np{\grp}}\quad\text{for } s\in\R,\,p\in(1,\infty). 
\end{align*}
Utilizing Lemma \ref{MultiplierLem_NegHomoMultipliers}, one readily verifies that $\HSRcompl{s}{p}$ is a Banach space with respect to the norm 
\begin{align*}
\norm{\f}_{s,p}:= \norm{\op\nb{\parnorm{k}{\xi}^s}{\f}}_p.
\end{align*}
Time-periodic Bessel Potential spaces on the half-space are defined via restriction of distributions in the time-periodic Bessel Potential spaces defined above:
\begin{align*}
&\HSRcompl{s}{p}(\halfgrp):=\setc{\funcrestriction{f}{\halfgrp}}{f\in\HSRcompl{s}{p}},\\
&\norm{f}_{s,p,\halfgrp}:=
\inf\setc{\norm{F}_{s,p}}{F\in\HSRcompl{s}{p},\ \funcrestriction{F}{\halfgrp}=f}.
\end{align*}
Identifying $\HSRcompl{s}{p}(\halfgrp)$ as a factor space of $\HSRcompl{s}{p}$ in the canonical way, we see that $\HSRcompl{s}{p}(\halfgrp)$ is a Banach
space in the norm $\norm{\cdot}_{s,p,\halfgrp}$.

\begin{prop}\label{TPBesselEqualsTPSobWholespacecaseProp}
Let $p\in(1,\infty)$. Then $\HSRcompl{2m}{p}(\grp)=\WSRcompl{1,2m}{p}(\grp)$ and $\HSRcompl{2m}{p}(\halfgrp)=\WSRcompl{1,2m}{p}(\halfgrp)$
with equivalent norms. 
\end{prop}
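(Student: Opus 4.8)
The plan is to deduce everything from the Fourier-multiplier lemmas of the preceding subsection, beginning with the whole-space identity. Since $\HSRcompl{2m}{p}(\grp)$ and $\WSRcompl{1,2m}{p}(\grp)$ both sit inside $\TDRcompl(\grp)$, it suffices to compare $\norm{\cdot}_{2m,p}$ and $\norm{\cdot}_{1,2m,p}$ on the common domain. For the bound $\norm{f}_{1,2m,p}\le\Ccn{C}\norm{f}_{2m,p}$ I would, for each $\snorm{\alpha}\le 2m$, write $\partial_x^\alpha f=\op\bb{i^{\snorm{\alpha}}\xi^\alpha\parnorm{k}{\xi}^{-2m}}\op\bb{\parnorm{k}{\xi}^{2m}}f$, and likewise $\partial_t f=\op\bb{ik\,\parnorm{k}{\xi}^{-2m}}\op\bb{\parnorm{k}{\xi}^{2m}}f$ and $f=\op\bb{\parnorm{k}{\xi}^{-2m}}\op\bb{\parnorm{k}{\xi}^{2m}}f$. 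The symbols $\xi^\alpha\parnorm{\eta}{\xi}^{-2m}$, $\eta\,\parnorm{\eta}{\xi}^{-2m}$ and $\parnorm{\eta}{\xi}^{-2m}$ are parabolically $(\snorm{\alpha}-2m)$-, $0$- and $(-2m)$-homogeneous on $(\R\times\R^n)\setminus\set{(0,0)}$, all of nonpositive degree; hence by Lemma \ref{MultiplierLem_NegHomoMultipliers} each of them (restricted to $\dualgrp\setminus\set{(0,0)}$) is an $\LRcompl{p}(\grp)$-multiplier, and taking $\LR{p}$-norms and summing gives the claim.

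For the converse bound $\norm{f}_{2m,p}\le\Ccn{C}\norm{f}_{1,2m,p}$ the idea is to use $\parnorm{k}{\xi}^{2m}=(\snorm{k}^2+\snorm{\xi}^{4m})\,\parnorm{k}{\xi}^{-2m}$ together with the multinomial identity $\snorm{\xi}^{4m}=\sum_{\snorm{\alpha}=2m}\binom{2m}{\alpha}\xi^{2\alpha}$ and the calculus $\FT[\partial_t f]=ik\FT f$, $\FT[\partial_x^\alpha f]=i^{\snorm{\alpha}}\xi^\alpha\FT f$ to obtain, as an identity in $\TDRcompl(\grp)$,
\begin{align*}
\op\bb{\parnorm{k}{\xi}^{2m}}f=-i\,\op\bb{\tfrac{k}{\parnorm{k}{\xi}^{2m}}}\partial_t f+(-1)^m\sum_{\snorm{\alpha}=2m}\binom{2m}{\alpha}\op\bb{\tfrac{\xi^\alpha}{\parnorm{k}{\xi}^{2m}}}\partial_x^\alpha f .
\end{align*}
Now $k\,\parnorm{k}{\xi}^{-2m}$ and the $\xi^\alpha\parnorm{k}{\xi}^{-2m}$ with $\snorm{\alpha}=2m$ are parabolically $0$-homogeneous, hence $\LRcompl{p}(\grp)$-multipliers by Lemma \ref{MultiplierLem_NegHomoMultipliers} (equivalently, by Corollary \ref{MultiplierLem} with $\beta=0$), and the desired estimate follows. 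This establishes $\HSRcompl{2m}{p}(\grp)=\WSRcompl{1,2m}{p}(\grp)$ with equivalent norms.

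For the half-space case I would invoke a bounded linear extension operator $E:\WSR{1,2m}{p}(\torus\times\R^n_+)\to\WSR{1,2m}{p}(\grp)$ with $(Ef)_{|\halfgrp}=f$, furnished by the classical higher-order reflection in $x_n$ (see e.g.\ \cite[Section 2.9]{Triebel_InterpolationTheory1978}); because this reflection touches only $x_n$, $E$ commutes with $\partial_t$ and with $\projcompl$, and therefore restricts to a bounded operator $\WSRcompl{1,2m}{p}(\halfgrp)\to\WSRcompl{1,2m}{p}(\grp)$. Then, given $f\in\WSRcompl{1,2m}{p}(\halfgrp)$, the whole-space part gives $Ef\in\WSRcompl{1,2m}{p}(\grp)=\HSRcompl{2m}{p}(\grp)$ with $\norm{Ef}_{2m,p}\le\Ccn{C}\norm{f}_{1,2m,p}$, whence $f=(Ef)_{|\halfgrp}\in\HSRcompl{2m}{p}(\halfgrp)$ with the corresponding norm bound. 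Conversely, given $f\in\HSRcompl{2m}{p}(\halfgrp)$, I would choose $F\in\HSRcompl{2m}{p}(\grp)$ with $F_{|\halfgrp}=f$ and $\norm{F}_{2m,p}\le 2\norm{f}_{2m,p,\halfgrp}$; by the whole-space part $F\in\WSRcompl{1,2m}{p}(\grp)$, and since $\R^n_+$ satisfies the segment condition the restriction $F_{|\halfgrp}$ lies in $\WSR{1,2m}{p}(\torus\times\R^n_+)$ and is purely oscillatory (restriction in $x$ commutes with $\projcompl$), with $\norm{f}_{1,2m,p,\halfgrp}\le\norm{F}_{1,2m,p}\le\Ccn{C}\norm{f}_{2m,p,\halfgrp}$.

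The multiplier manipulations are routine once the homogeneity degrees are tracked carefully; the single point deserving genuine attention is the extension operator for the \emph{anisotropic} time-periodic spaces, namely that the reflection extension preserves spatial regularity of order $2m$ and temporal regularity of order $1$ simultaneously. This turns out to be immediate, since the reflection leaves the time variable untouched so that $\partial_t(Ef)=E(\partial_t f)$, and the spatial part is exactly the classical $\WSR{2m}{p}$-reflection applied fibrewise; thus the only substantive ingredients are the whole-space multiplier argument and the standard reflection construction.
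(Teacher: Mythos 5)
Your argument is correct and follows essentially the same route as the paper: the whole-space identity is obtained from Lemma \ref{MultiplierLem_NegHomoMultipliers} applied to the parabolically homogeneous symbols $\xi^\alpha\parnorm{k}{\xi}^{-2m}$, $k\,\parnorm{k}{\xi}^{-2m}$, and their counterparts in the converse direction, and the half-space identity is then deduced via a bounded extension operator for $\WSRcompl{1,2m}{p}(\halfgrp)$ (the paper cites the standard construction in Adams rather than Triebel, but it is the same reflection argument). Your write-up is in fact more detailed than the paper's, which only sketches the norm equivalence.
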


\begin{proof}
It follows from Lemma \ref{MultiplierLem_NegHomoMultipliers} that $\op\bb{\parnorm{\eta}{\xi}^{-m}}$ extends to a bounded operator on $\LRcompl{p}(\grp)$, which implies that $\norm{\f}_{m,p}$ is equivalent to the norm $\norm{f}_p+\norm{\f}_{m,p}$. From this, we infer that 
$\HSRcompl{2m}{p}=\WSRcompl{1,2m}{p}(\grp)$. A standard method (see for example 
\cite[Theorem 4.26]{adams:sobolevspaces}) can be used to construct an extension operator $\Ext:\WSRcompl{1,2m}{p}(\halfgrp)\ra\WSRcompl{1,2m}{p}(\grp)$.
The existence of an extension operator combined with the fact that $\HSRcompl{2m}{p}=\WSRcompl{1,2m}{p}(\grp)$ implies 
$\HSRcompl{2m}{p}(\halfgrp)=\WSRcompl{1,2m}{p}(\halfgrp)$.   
\end{proof}

\begin{prop}\label{TPBesselEqualsTPSobHalfspacecaseProp}
Let $s\in\R$. Then
\begin{align}
&\norm{\uf}_{s+1,p,\halfgrp}\leq \Ccn{C} \bp{ \norm{\partial_n\uf}_{s,p,\halfgrp} + \norm{\op\bb{{\parnorm{k}{\xip}}}\uf}_{s,p,\halfgrp}},\label{TPBesselEqualsTPSobHalfspacecasePropEst}\\
&\norm{\partial_j\uf}_{s,p,\halfgrp}\leq  { \norm{\uf}_{s+1,p,\halfgrp}}\quad(j=1,\ldots,n)\label{TPBesselEqualsTPSobHalfspacecasePropEst2}\\
&\norm{\partial_t\uf}_{s,p,\halfgrp}\leq  { \norm{\uf}_{s+2m,p,\halfgrp}}\label{TPBesselEqualsTPSobHalfspacecasePropEst3}
\end{align}
for all $\uf\in\TDR(\grp)$.
\end{prop}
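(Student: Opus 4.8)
The plan is to reduce every estimate to a multiplier statement on the whole group $\grp$, use the definition of the half-space norms as infima of whole-space norms over extensions, and then apply Lemma \ref{MultiplierLem_NegHomoMultipliers} (and Lemma \ref{MultiplierLem_ZeroHomoMultipliers} where the symbol is $0$-homogeneous) to the relevant symbols. First I would treat \eqref{TPBesselEqualsTPSobHalfspacecasePropEst2} and \eqref{TPBesselEqualsTPSobHalfspacecasePropEst3}, which are the easy ones. For a fixed $\uf\in\TDR(\grp)$ and any extension $F\in\HSRcompl{s+1}{p}$ of $\funcrestriction{\uf}{\halfgrp}$, the distribution $\partial_j F$ is an extension of $\partial_j\uf$, so $\norm{\partial_j\uf}_{s,p,\halfgrp}\leq\norm{\partial_j F}_{s,p}=\norm{\op\nb{\xi_j\parnorm{k}{\xi}^{s}}F}_p$. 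Since $\xi_j\parnorm{k}{\xi}^{-1}$ is the restriction to $\dualgrp\setminus\{(0,0)\}$ of a parabolically $0$-homogeneous function on $\R\times\R^n\setminus\{(0,0)\}$, Lemma \ref{MultiplierLem_NegHomoMultipliers} gives that $\op\nb{\xi_j\parnorm{k}{\xi}^{-1}}$ is bounded on $\LRcompl{p}(\grp)$; hence $\norm{\op\nb{\xi_j\parnorm{k}{\xi}^{s}}F}_p=\norm{\op\nb{\xi_j\parnorm{k}{\xi}^{-1}}\op\nb{\parnorm{k}{\xi}^{s+1}}F}_p\leq \Ccn{C}\norm{F}_{s+1,p}$. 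Taking the infimum over all extensions $F$ yields \eqref{TPBesselEqualsTPSobHalfspacecasePropEst2}. Estimate \eqref{TPBesselEqualsTPSobHalfspacecasePropEst3} is identical with $\xi_j$ replaced by $k$ and exponent shift $2m$ in place of $1$, using that $k\parnorm{k}{\xi}^{-2m}$ is parabolically $0$-homogeneous.

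For \eqref{TPBesselEqualsTPSobHalfspacecasePropEst} the direction is reversed: I want to bound the full norm $\norm{\uf}_{s+1,p,\halfgrp}$ by the two terms on the right. The natural approach is to split the identity on $\dualgrp\setminus\{(0,0)\}$ as
\begin{align*}
\parnorm{k}{\xi}^{s+1}=\frac{\xi_n\parnorm{k}{\xi}^{s+1}}{\snorm{\xi_n}+\parnorm{k}{\xip}}\cdot\frac{\xi_n}{\snorm{\xi_n}}\cdot\parnorm{k}{\xi}^{?}+\cdots,
\end{align*}
but this is awkward because $\xi_n/\snorm{\xi_n}$ is not smooth. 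Instead, the cleaner route is to write, for any whole-space extension $F$ of $\funcrestriction{\uf}{\halfgrp}$,
\begin{align*}
\parnorm{k}{\xi}^{s+1}\widehat{F}
=\underbrace{\frac{\xi_n}{\snorm{(\xi_n,\parnorm{k}{\xip})}}\,\parnorm{k}{\xi}^{s}}_{m_1}\cdot\big(i\xi_n\widehat{F}\big)\Big/(i\xi_n)\ \text{— avoid —}
\end{align*}
so in fact I would argue at the level of the operators directly: set $m_1(k,\xi):=\xi_n\parnorm{k}{\xi}^{s}\big(\snorm{\xi_n}+\parnorm{k}{\xip}\big)^{-1}$ and $m_2(k,\xi):=\parnorm{k}{\xip}\parnorm{k}{\xi}^{s}\big(\snorm{\xi_n}+\parnorm{k}{\xip}\big)^{-1}$; both are smooth on $\dualgrp\setminus\{(0,0)\}$ away from $\xi_n=0$, which is the catch. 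The correct and standard fix — the one actually used in such Bessel-potential arguments — is to avoid the non-smooth symbol by using the Fourier representation in the normal variable: write $\op\nb{\parnorm{k}{\xi}^{s+1}}F$ as a sum of a term involving $\partial_n F$ and a term involving $\op\nb{\parnorm{k}{\xip}}F$ through the elementary operator identity $\parnorm{k}{\xi}^{2} \lesssim \xi_n^2+\parnorm{k}{\xip}^{2}$ composed with $0$-homogeneous factors, i.e. bound
\begin{align*}
\norm{\op\nb{\parnorm{k}{\xi}^{s+1}}F}_p
\leq \Ccn{C}\Big(\norm{\op\nb{\xi_n\parnorm{k}{\xi}^{s}}F}_p+\norm{\op\nb{\parnorm{k}{\xip}\parnorm{k}{\xi}^{s}}F}_p\Big),
\end{align*}
since $\parnorm{k}{\xi}^{s+1}\big(\xi_n\parnorm{k}{\xi}^{s}\,,\,\parnorm{k}{\xip}\parnorm{k}{\xi}^{s}\big)^{-1}$-type ratios are $0$-homogeneous and bounded (apply Lemma \ref{MultiplierLem_ZeroHomoMultipliers}/\ref{MultiplierLem_NegHomoMultipliers}). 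The first term equals $\norm{\op\nb{\parnorm{k}{\xi}^{s}}(\partial_n F)}_p=\norm{\partial_n F}_{s,p}$, an extension norm for $\partial_n\uf$; the second equals $\norm{\op\nb{\parnorm{k}{\xip}}F}_{s,p}$, an extension norm for $\op\nb{\parnorm{k}{\xip}}\uf$ once one checks $\op\nb{\parnorm{k}{\xip}}$ commutes with restriction to $\halfgrp$ (it does, being a Fourier multiplier in the tangential variables only). Taking the infimum over extensions $F$ of $\funcrestriction{\uf}{\halfgrp}$ on the left, and observing that any extension $F$ simultaneously serves as an extension for both $\partial_n\uf$ and $\op\nb{\parnorm{k}{\xip}}\uf$, gives \eqref{TPBesselEqualsTPSobHalfspacecasePropEst}.

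The main obstacle I anticipate is exactly the bookkeeping around the infima over extensions in \eqref{TPBesselEqualsTPSobHalfspacecasePropEst}: one must be careful that a single extension $F$ of $\funcrestriction{\uf}{\halfgrp}$ controls the right-hand side, rather than taking separate (possibly incompatible) extensions for $\partial_n\uf$ and for $\op\nb{\parnorm{k}{\xip}}\uf$. This is resolved by noting the restriction map is a quotient map and that the two operators on the right are obtained from $F$ by applying fixed Fourier multipliers that commute with restriction to $\halfgrp$, so the infimum over extensions of $\funcrestriction{\uf}{\halfgrp}$ dominates. A secondary technical point is justifying that all the ratio-symbols appearing are genuinely restrictions of parabolically $0$-homogeneous (or non-positively homogeneous) functions smooth away from the origin, so that Lemmas \ref{MultiplierLem_ZeroHomoMultipliers} and \ref{MultiplierLem_NegHomoMultipliers} apply; this is routine once one writes them out, using $\parnorm{k}{\xip}\leq\parnorm{k}{\xi}$ and the equivalence $\parnorm{k}{\xi}\sim\snorm{\xi_n}+\parnorm{k}{\xip}$ up to parabolically $0$-homogeneous factors.
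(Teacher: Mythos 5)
Your treatment of \eqref{TPBesselEqualsTPSobHalfspacecasePropEst2} and \eqref{TPBesselEqualsTPSobHalfspacecasePropEst3} is correct and coincides with the paper's argument. The problem lies in \eqref{TPBesselEqualsTPSobHalfspacecasePropEst}, and it is exactly the bookkeeping issue you flag at the end --- but your proposed resolution does not resolve it, because it establishes a domination in the wrong direction. Starting from an extension $F$ of $\funcrestriction{\uf}{\halfgrp}$ and observing that $\partial_n F$ and $\op\nb{\pkxp}F$ are extensions of $\partial_n\uf$ and $\op\nb{\pkxp}\uf$ only yields
\begin{align*}
\norm{\uf}_{s+1,p,\halfgrp}\leq \Ccn{C}\,\inf_F\bp{\norm{\partial_n F}_{s,p}+\norm{\op\nb{\pkxp}F}_{s,p}},
\end{align*}
and this coupled infimum is \emph{at least as large as} $\norm{\partial_n\uf}_{s,p,\halfgrp}+\norm{\op\nb{\pkxp}\uf}_{s,p,\halfgrp}$, since each $\partial_n F$ is merely one competitor in the infimum defining $\norm{\partial_n\uf}_{s,p,\halfgrp}$, and likewise for the other term. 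So the chain of inequalities terminates in a quantity that the right-hand side of \eqref{TPBesselEqualsTPSobHalfspacecasePropEst} does not control. What the estimate actually requires is the converse construction: given \emph{arbitrary, independent} extensions $G$ of $\funcrestriction{\partial_n\uf}{\halfgrp}$ and $H$ of $\funcrestriction{\op\nb{\pkxp}\uf}{\halfgrp}$, one must manufacture a single extension $F$ of $\funcrestriction{\uf}{\halfgrp}$ with $\norm{F}_{s+1,p}\leq \Ccn{C}\np{\norm{G}_{s,p}+\norm{H}_{s,p}}$. Nothing in your argument produces such an $F$; the remark that the multipliers commute with restriction gives only the easy inclusion of extension sets, not the surjectivity you need.

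The missing ingredient is the Paley--Wiener theorem. The paper works with the single symbol $i\xi_n+\pkxp$ (note that $\op\bb{i\xi_n+\pkxp}\uf=\partial_n\uf+\op\bb{\pkxp}\uf$) and shows, via Proposition \ref{paley_wiener_prop} applied to the holomorphic and bounded extension of $z\mapsto(iz+\pkxp)^{-1}$ to the lower complex half-plane, that $\op\bb{(i\xi_n+\pkxp)^{-1}}$ preserves supports in $\lhalfgrpclosed$. Hence $U\mapsto\op\bb{i\xi_n+\pkxp}U$ is a \emph{bijection} between the set of extensions of $\funcrestriction{\uf}{\halfgrp}$ and the set of extensions of $\funcrestriction{\op\bb{i\xi_n+\pkxp}\uf}{\halfgrp}$, so the coupled infimum collapses to $\norm{\op\bb{i\xi_n+\pkxp}\uf}_{s,p,\halfgrp}$, and subadditivity of the quotient norm then yields \eqref{TPBesselEqualsTPSobHalfspacecasePropEst}. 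In your notation the repair reads: put $F:=\op\bb{(i\xi_n+\pkxp)^{-1}}(G+H)$; the support-preservation property is precisely what guarantees $\funcrestriction{F}{\halfgrp}=\funcrestriction{\uf}{\halfgrp}$, and the parabolically $0$-homogeneous multiplier $\parnorm{k}{\xi}(i\xi_n+\pkxp)^{-1}$ gives $\norm{F}_{s+1,p}\leq\Ccn{C}\norm{G+H}_{s,p}$. Without this step the estimate is not proved.
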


\begin{proof}
The complex function $z\mapsto(iz+\pkxp)^{-1}$ is holomorphic in the lower complex plane.
Due to Lemma \ref{MultiplierLem_ZeroHomoMultipliers}, we can employ Proposition \ref{paley_wiener_prop} to
conclude
\begin{align}\label{TPBesselEqualsTPSobHalfspacecaseProp_SuppProperty}
\supp\psi\subset\lhalfgrpclosed \quad \Ra \quad \supp\Bp{\op\bb{(i\xi_n+\pkxp)^{-1}}\psi}\subset\lhalfgrpclosed 
\end{align} 
for all $\psi\in\SRcompl(\grp)$. By duality, the same is true for all $\psi\in\TDRcompl(\grp)$. 
We employ Lemma \ref{MultiplierLem_ZeroHomoMultipliers} to estimate
\begin{align*}
\norm{\uf}_{s+1,p,\halfgrp} 
&\leq \Ccn{C}\,\inf\setcl{\norml{\op\bb{i\xi_n+\pkxp}U}_{s,p}}{U\in\HSRcompl{s+1}{p},\ \funcrestriction{U}{\halfgrp}=\funcrestriction{\uf}{\halfgrp}}.
\end{align*} 
It follows from \eqref{TPBesselEqualsTPSobHalfspacecaseProp_SuppProperty} that
\begin{align*}
\funcrestriction{U}{\halfgrp}=\funcrestriction{\uf}{\halfgrp} \quad \iff\quad 
\funcrestriction{\op\bb{i\xi_n+\pkxp}U}{\halfgrp}=\funcrestriction{\op\bb{i\xi_n+\pkxp}\uf}{\halfgrp}. 
\end{align*}
We thus conclude 
\begin{align*}
\norm{\uf}_{s+1,p,\halfgrp} 
&\leq \Ccn{C}\, \norm{\op\bb{i\xi_n+\pkxp}\uf}_{s,p,\halfgrp}
\end{align*} 
and thereby \eqref{TPBesselEqualsTPSobHalfspacecasePropEst}. Furthermore,
\begin{align*}
\norm{\partial_n\uf}_{s,p,\halfgrp}  
&=\inf\setc{\norm{U}_{s,p}}{U\in\HSRcompl{s}{p},\ U=\partial_n\uf\ \tin\halfgrp}\\
&\leq\inf\setc{\norm{\partial_n V}_{s,p}}{V \in\HSRcompl{s+1}{p},\ V=\uf\ \tin\halfgrp}\\
&\leq\inf\setc{\norm{V}_{s+1,p}}{V \in\HSRcompl{s+1}{p},\ V=\uf\ \tin\halfgrp} = \norm{\uf}_{s+1,p,\halfgrp},
\end{align*}
where the last inequality follows by an application of Lemma \ref{MultiplierLem_NegHomoMultipliers}.
We have thus shown \eqref{TPBesselEqualsTPSobHalfspacecasePropEst2}. One may verify \eqref{TPBesselEqualsTPSobHalfspacecasePropEst3} in 
a similar manner. 
\end{proof}

\begin{lem}\label{BS_SobSloboMultiplierLem}
Let $\beta\in(0,1)$ and $\alpha\in\bp{2m(\beta-1),2m\beta}$. Then $\op\bb{\pkx^\alpha}$ extends to a bounded operator
$\op\bb{\pkx^\alpha}:\WSRcompl{\beta,2m\beta}{p}(\torus\times\R^n)\ra\WSRcompl{\beta-\frac{\alpha}{2m}}{2m\beta-\alpha}(\torus\times\R^n)$.
\end{lem}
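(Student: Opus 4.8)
The plan is to route the claim through the time-periodic Bessel potential spaces $\HSRcompl{s}{p}$, on which $\op[\pkx^\alpha]$ acts transparently as a lifting operator, and then to transfer the mapping property to the Sobolev--Slobodecki\u{\i} scale by real interpolation. First I would record that every $\uf\in\TDRcompl(\grp)$ has Fourier transform supported in $\set{k\neq 0}$, where $\pkx\geq\bp{\perf}^{\frac{1}{2m}}>0$; hence $\op[\pkx^\sigma]\uf\in\TDRcompl(\grp)$ is well defined for every $\sigma\in\R$, and $\op[\pkx^{\sigma_1}]\op[\pkx^{\sigma_2}]=\op[\pkx^{\sigma_1+\sigma_2}]$ on $\TDRcompl(\grp)$. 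Directly from the definition of the norm $\norm{\cdot}_{s,p}$ this shows that $\op[\pkx^\sigma]\colon\HSRcompl{s}{p}\to\HSRcompl{s-\sigma}{p}$ is an isometric isomorphism with inverse $\op[\pkx^{-\sigma}]$, for all $s,\sigma\in\R$. Since $\op[\pkx^0]=\id$ we have $\HSRcompl{0}{p}=\LRcompl{p}(\grp)$, and Proposition~\ref{TPBesselEqualsTPSobWholespacecaseProp} gives $\HSRcompl{2m}{p}=\WSRcompl{1,2m}{p}(\grp)$ with equivalent norms; consequently $\WSRcompl{\theta,2m\theta}{p}(\grp)=\bp{\HSRcompl{0}{p},\HSRcompl{2m}{p}}_{\theta,p}$ for every $\theta\in(0,1)$. (Note that, in contrast to the Propositions above, no Fourier multiplier theorem is needed for this step.)

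Next, set $\gamma:=\beta-\frac{\alpha}{2m}$ and observe that the hypothesis $\alpha\in\bp{2m(\beta-1),2m\beta}$ is exactly equivalent to $2m\gamma=2m\beta-\alpha\in(0,2m)$, i.e.\ $\gamma\in(0,1)$, so that $\WSRcompl{\gamma,2m\gamma}{p}(\grp)$ is defined as a fractional order Sobolev--Slobodecki\u{\i} space. By the first step, $\op[\pkx^\alpha]$ maps $\HSRcompl{0}{p}$ boundedly into $\HSRcompl{-\alpha}{p}$ and $\HSRcompl{2m}{p}$ boundedly into $\HSRcompl{2m-\alpha}{p}$; hence by the interpolation property of the real method together with the identity above,
\begin{equation*}
\op[\pkx^\alpha]\colon\WSRcompl{\beta,2m\beta}{p}(\grp)\longrightarrow\bp{\HSRcompl{-\alpha}{p},\HSRcompl{2m-\alpha}{p}}_{\beta,p}
\end{equation*}
is bounded. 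It then remains only to identify the target space on the right with $\WSRcompl{\gamma,2m\gamma}{p}(\grp)$.

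This last identification is the heart of the matter and, I expect, the main obstacle, since the shift isomorphisms $\op[\pkx^\sigma]$ only reparametrize the couple, not the interpolation parameter. It should follow from the reiteration theorem for interpolation scales (see \cite{Triebel_InterpolationTheory1978}): since complex interpolation reproduces the Bessel scale, reiteration yields that for real $s_0\neq s_1$ and $\theta\in(0,1)$ the real interpolation space $\bp{\HSRcompl{s_0}{p},\HSRcompl{s_1}{p}}_{\theta,p}$ depends only on $s_\theta:=(1-\theta)s_0+\theta s_1$ and on $p$, and in particular equals $\bp{\HSRcompl{0}{p},\HSRcompl{2m}{p}}_{s_\theta/(2m),p}$ whenever $s_\theta\in(0,2m)$. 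Applying this with $s_0=-\alpha$, $s_1=2m-\alpha$, $\theta=\beta$, so that $s_\theta=2m\gamma\in(0,2m)$, gives
\begin{equation*}
\bp{\HSRcompl{-\alpha}{p},\HSRcompl{2m-\alpha}{p}}_{\beta,p}=\bp{\HSRcompl{0}{p},\HSRcompl{2m}{p}}_{\gamma,p}=\WSRcompl{\gamma,2m\gamma}{p}(\grp),
\end{equation*}
which completes the argument, the passage from $\SRcompl(\grp)$ to all of $\WSRcompl{\beta,2m\beta}{p}(\grp)$ being a routine density step. Should a directly citable reference for this interpolation-scale identity in the present anisotropic torus setting be lacking, I would prove it by hand, e.g.\ via an intrinsic Littlewood--Paley characterization of both $\WSRcompl{\gamma,2m\gamma}{p}(\grp)$ and $\bp{\HSRcompl{s_0}{p},\HSRcompl{s_1}{p}}_{\theta,p}$; everything else in the proof is routine.
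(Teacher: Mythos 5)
Your proposal is correct and follows essentially the same route as the paper: both reduce the claim to boundedness of $\op\bb{\pkx^\alpha}$ between time-periodic Bessel potential spaces and then identify the relevant real interpolation spaces of the scale $\HSRcompl{s}{p}$ with the Sobolev--Slobodecki\u{\i} spaces via a reiteration argument, justified (as you anticipate in your fallback) by a Littlewood--Paley/retract argument in the spirit of Bergh--L\"ofstr\"om, with Mikhlin's theorem supplied through the Transference Principle. The only cosmetic difference is the choice of endpoints: you shift on the target side and interpolate with parameter $\beta$, so the identification is needed only for the target couple $\bp{\HSRcompl{-\alpha}{p},\HSRcompl{2m-\alpha}{p}}_{\beta,p}$, whereas the paper shifts on the source side, interpolates with $\theta=\frac{2m\beta-\alpha}{2m-\alpha}$, and applies the identification to both couples.
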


\begin{proof}
By interpolation, we directly obtain that $\op\bb{\pkx^\alpha}$ extends to a bounded operator
\begin{align*}
\op\bb{\pkx^\alpha}:\bp{\HSRcompl{\alpha}{p},\HSRcompl{2m}{p}}_{\theta,p}\ra
\bp{\LRcompl{p}(\torus\times\R^n),\HSRcompl{2m-\alpha}{p}}_{\theta,p}
\end{align*}
for any $\theta\in(0,1)$. Choose $\theta :=\frac{2m\beta-\alpha}{2m-\alpha}$. Using a dyadic decomposition of the Fourier space with respect to the parabolic length, $\HSRcompl{s}{p}$ can be identified as the complex interpolation space $[\HSRcompl{s_0}{p},\HSRcompl{s_1}{p}]_{\omega}$ with $\frac{1}{s}=\frac{1-\omega}{s_0}+\frac{\omega}{s_1}$ by verifying that it is a retract of $L^p(l^{s,2})$ as in \cite[Theorem 6.4.3]{BergLoefstroemBook}. In fact, relying on the Transference Principle, we have a Mikhlin's multiplier theorem at our disposal, which is the key ingredient in \cite[Theorem 6.4.3]{BergLoefstroemBook}. Hence, by reiteration and Proposition \ref{TPBesselEqualsTPSobWholespacecaseProp}
\begin{align*}
\bp{\HSRcompl{\alpha}{p},\HSRcompl{2m}{p}}_{\theta,p} = 
\bp{\LRcompl{p}(\torus\times\R^n),\HSRcompl{2m}{p}}_{\beta,p}
=\WSRcompl{\beta,2m\beta}{p}(\torus\times\R^n).
\end{align*}
Similarly, 
$\bp{\LRcompl{p}(\torus\times\R^n),\HSRcompl{2m-\alpha}{p}}_{\theta,p}=\WSRcompl{\beta-\frac{\alpha}{2m}}{2m\beta-\alpha}(\torus\times\R^n)$. 
\end{proof}

Finally, we 
characterize the trace spaces of the time-periodic Bessel Potential spaces.

\begin{lem}\label{BS_TraceSpaceBesselPotentialLem}
Let $p\in(1,\infty)$.
The trace operator $\trace_m$ defined in \eqref{DefOfTraceOpr} extends to a bounded operator
\begin{align}\label{BS_TraceSpaceBesselPotentialLemTraceOpr}
\trace_m:\HSRcompl{m}{p}(\halfgrp)\ra
\prod_{j=0}^{m-1}\WSR{\half-\frac{j}{2m}-\frac{1}{2mp},m-j-\frac{1}{p}}{p}(\torus\times\R^{n-1})
\end{align}
that is onto and has a bounded right inverse. If $\uf\in\HSRcompl{m}{p}(\grp)$ with $\supp(\uf)\subset\halfgrpclosed$, then 
$\trace_m\bp{\funcrestriction{\uf}{\halfgrp}}=0$. If $\uf\in\HSRcompl{m}{p}(\halfgrp)$ with $\trace_m\bp{\funcrestriction{\uf}{\halfgrp}}=0$, then 
$\uf$ is the restriction of a function $U\in\HSRcompl{m}{p}(\grp)$ with $\supp U\subset\halfgrpclosed$.
\end{lem}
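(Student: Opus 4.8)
\emph{Strategy.} The plan is to reduce all three assertions to the trace theory \eqref{TraceOprMappingproperties} for $\WSR{1,2m}{p}(\torus\times\R^n_+)$, which by Proposition~\ref{TPBesselEqualsTPSobWholespacecaseProp} coincides with $\HSRcompl{2m}{p}(\halfgrp)$. The key algebraic observation is that the $\ell$-th factor of the target space in \eqref{BS_TraceSpaceBesselPotentialLemTraceOpr} equals the $(m+\ell)$-th factor of the target space in \eqref{TraceOprMappingproperties} (taken with $l=2m$), and that the two scales are bridged by the one-dimensional lifting $\op\bb{(i\xi_n+\pkxp)^{-m}}$. Indeed, $\snorm{i\xi_n+\pkxp}=(\xi_n^2+\pkxp^2)^{\frac12}$ is comparable to $\pkx$ and bounded away from $0$ wherever $k\neq0$, so the symbols $\pkx^m(i\xi_n+\pkxp)^{-m}$ and its reciprocal are, after the usual cut-off near $k=0$, parabolically $0$-homogeneous; by the cut-off/transference argument behind Lemma~\ref{MultiplierLem_NegHomoMultipliers} the operator $\op\bb{(i\xi_n+\pkxp)^{-m}}$ is then an isomorphism of $\HSRcompl{s}{p}(\grp)$ onto $\HSRcompl{s+m}{p}(\grp)$ for every $s$. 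The decisive feature is that $\op\bb{(i\xi_n+\pkxp)^{m}}=(\partial_n+\op\bb{\pkxp})^{m}$ and that $\op\bb{\pkxp}$ acts only in the variables $(t,x')$, hence commutes with restriction to $\set{x_n=0}$; this is exactly why one lifts with $\op\bb{(i\xi_n+\pkxp)^{-m}}$ rather than with $\op\bb{\pkx^{-m}}$.

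\emph{Boundedness of the trace.} Given $\uf\in\HSRcompl{m}{p}(\halfgrp)$, choose $U\in\HSRcompl{m}{p}(\grp)$ extending it, set $V:=\op\bb{(i\xi_n+\pkxp)^{-m}}U\in\HSRcompl{2m}{p}(\grp)=\WSRcompl{1,2m}{p}(\grp)$, and expand $U=\sum_{k=0}^{m}\binom mk\op\bb{\pkxp^{m-k}}\partial_n^{k}V$. Applying $\partial_n^{\ell}$ for $0\le\ell\le m-1$ and restricting to $x_n=0$ gives
\begin{align*}
\partial_n^{\ell}\uf\big|_{x_n=0}=\sum_{k=0}^{m}\binom mk\,\op\bb{\pkxp^{m-k}}\,\trace_1\!\bp{\partial_n^{k+\ell}V}.
\end{align*}
Here $\trace_1(\partial_n^{k+\ell}V)$ lies, by \eqref{TraceOprMappingproperties} applied with $\trace_{2m}$, in the anisotropic Sobolev space over $\torus\times\R^{n-1}$ of parabolic order $2m-(k+\ell)-\tfrac1p$, while $\op\bb{\pkxp^{m-k}}$ lowers that order by exactly $m-k$ (Lemma~\ref{BS_SobSloboMultiplierLem}, with $\pkxp$ over $\torus\times\R^{n-1}$ in the role of $\pkx$; its admissibility constraints hold because $p>1$ and $\ell\le m-1$). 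Every summand therefore lands in one and the same space, namely the $\ell$-th factor of \eqref{BS_TraceSpaceBesselPotentialLemTraceOpr}, with norm bounded by $\norm V_{2m,p}\lesssim\norm{\uf}_{m,p,\halfgrp}$. The auxiliary fact that $\partial_n^{\ell}$ maps $\HSRcompl{m}{p}$ boundedly into $\HSRcompl{m-\ell}{p}$, used here, follows from the parabolically $0$-homogeneous multiplier $\xi_n^{\ell}\pkx^{-\ell}$ and Lemma~\ref{MultiplierLem_NegHomoMultipliers}.

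\emph{Surjectivity with bounded right inverse, and the support statements.} For surjectivity, pad a datum $(h_0,\dots,h_{m-1})$ from the codomain of \eqref{BS_TraceSpaceBesselPotentialLemTraceOpr} by $m$ leading zeros; by the index match this tuple belongs to the codomain of $\trace_{2m}$ in \eqref{TraceOprMappingproperties}, and the bounded right inverse there --- composed with $\projcompl$, which all the operators involved respect --- produces $W\in\WSRcompl{1,2m}{p}(\halfgrp)$ realizing those $2m$ traces. Then $\uf:=\partial_n^{m}W\in\HSRcompl{m}{p}(\halfgrp)$ (again via $\xi_n^m\pkx^{-m}$ and Lemma~\ref{MultiplierLem_NegHomoMultipliers}) satisfies $\trace_m\uf=(h_0,\dots,h_{m-1})$, and $(h_j)\mapsto\partial_n^{m}W$ is a bounded right inverse. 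For the first support statement, if $\supp U\subset\halfgrpclosed$ then $\partial_n^{j}U\in\HSRcompl{m-j}{p}(\grp)\hookrightarrow\LRcompl{p}(\grp)$ for $0\le j\le m$, so each $\partial_n^{j}U$ with $j\le m-1$ lies in $W^{1,p}\bp{\R_{x_n};\LRcompl{p}(\torus\times\R^{n-1})}\hookrightarrow C\bp{\R_{x_n};\LRcompl{p}(\torus\times\R^{n-1})}$ and vanishes for $x_n<0$, hence at $x_n=0$; this gives $\trace_m(\funcrestriction{U}{\halfgrp})=0$. For the converse, one uses the identification
\begin{align*}
\HSRcompl{m}{p}(\grp)=W^{m,p}\bp{\R_{x_n};\LRcompl{p}(\torus\times\R^{n-1})}\cap\LR{p}\bp{\R_{x_n};\HSRcompl{m}{p}(\torus\times\R^{n-1})}
\end{align*}
with equivalent norms --- its nontrivial inclusion splitting $\op\bb{\pkx^{m}}$ into a tangential and a normal part through the parabolically $0$-homogeneous symbols $\pkx^{m}\pkxp^{m}(\pkxp^{2m}+\xi_n^{2m})^{-1}$ and $\pkx^{m}\xi_n^{m}(\pkxp^{2m}+\xi_n^{2m})^{-1}$, handled once more by transference --- so that any $\uf\in\HSRcompl{m}{p}(\halfgrp)$ with vanishing trace, extended by zero, lies in both factors on the right (the $W^{m,p}$-membership is the standard truncation statement for Sobolev functions whose boundary derivatives up to order $m-1$ vanish), hence in $\HSRcompl{m}{p}(\grp)$, with support in $\halfgrpclosed$.

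\emph{Main obstacle.} The real work is the Fourier-multiplier bookkeeping: checking that $\op\bb{(i\xi_n+\pkxp)^{\pm m}}$ and the two splitting symbols are bounded on $\LRcompl{p}(\grp)$ by the cut-off/transference device of Lemma~\ref{MultiplierLem_NegHomoMultipliers} (the symbol $\pkxp$ is smooth and bounded below only because $k$ stays away from $0$ on the oscillatory part of $\dualgrp$), and keeping strict track of the fact that restriction to $\set{x_n=0}$ commutes with the tangential multipliers $\op\bb{\pkxp^{\,\cdot}}$ but not with $\op\bb{\pkx^{\,\cdot}}$ --- which is what forces the particular lifting operator and makes the binomial expansion of $(\partial_n+\op\bb{\pkxp})^{m}$ the correct tool. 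Alternatively, the support statements follow directly from the Paley-Wiener Theorem, just as in Proposition~\ref{TPBesselEqualsTPSobHalfspacecaseProp}, using that $(\mp i\xi_n+\pkxp)^{-m}$ is holomorphic in a half-plane of the $\xi_n$-variable.
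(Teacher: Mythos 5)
Your proof is correct, but for the main assertion (boundedness and surjectivity of $\trace_m$) it takes a genuinely different route from the paper's. The paper identifies $\HSRcompl{m}{p}(\halfgrp)$ with the intersection space $\LR{p}\bp{\R_+;\HSRcompl{m}{p}(\torus\times\R^{n-1})}\cap\HSR{m}{p}\bp{\R_+;\LRcompl{p}(\torus\times\R^{n-1})}$ via a multiplier of the same type as your two ``splitting symbols'', then invokes the abstract real-interpolation trace method \cite[Theorem 1.8.3]{Triebel_InterpolationTheory1978} to obtain surjectivity onto $\prod_{j}\bp{\LRcompl{p},\HSRcompl{m}{p}}_{1-\frac jm-\frac1{mp},p}$, and finally identifies these interpolation spaces with the stated Sobolev--Slobodecki\u{\i} spaces by reiteration. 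You instead perform an order reduction: the lifting $\op\bb{(i\xi_n+\pkxp)^{-m}}=(\partial_n+\op\bb{\pkxp})^{-m}$ is an isomorphism onto $\HSRcompl{2m}{p}=\WSRcompl{1,2m}{p}$, its tangential part commutes with restriction to $\set{x_n=0}$, and the binomial expansion reduces everything to the integer-order trace theorem \eqref{TraceOprMappingproperties}, with Lemma \ref{BS_SobSloboMultiplierLem} doing the boundary bookkeeping; surjectivity then comes from padding the datum with $m$ leading zeros and applying $\partial_n^m$ to the lift. The index arithmetic checks out: with $\beta=1-\frac{k+\ell}{2m}-\frac1{2mp}$ and $\alpha=m-k$ every summand lands in $\WSR{\frac12-\frac{\ell}{2m}-\frac1{2mp},m-\ell-\frac1p}{p}(\torus\times\R^{n-1})$, and the admissibility constraints of Lemma \ref{BS_SobSloboMultiplierLem} hold exactly because $p>1$ and $\ell\le m-1$, as you note. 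What your route buys is that it avoids re-running the trace method and the reiteration step on the fractional scale, reusing only tools the paper has already established; the cost is heavier multiplier bookkeeping. One caveat, applying equally to both proofs: symbols built from $\pkxp$ are not smooth on the ray $\set{\eta=0,\ \xi'=0,\ \xi_n\neq0}$, so Lemma \ref{MultiplierLem_NegHomoMultipliers} does not apply verbatim and one must run the cut-off-in-$\eta$/Marcinkiewicz/transference argument from its proof --- you flag this explicitly, and the paper's own multiplier $\pkx^{m}/(\pkxp^{m}+\xi_n^{m})$ has the same (indeed a slightly worse, since for odd $m$ its denominator can vanish) issue. Your treatment of the two support statements coincides in substance with the paper's: one-dimensional Sobolev embedding in $x_n$ for the first, and for the second the zero extension characterized through the intersection space, where the paper argues instead via density of $\CRci(\halfgrp)$.
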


\begin{proof}
For either $I=\R$ or $I=\R_+$, put
\begin{align*}
\VR(I):=\LR{p}\bp{I;\HSRcompl{m}{p}(\torus\times\R^{n-1})}\cap\HSR{m}{p}\bp{I;\LRcompl{p}(\torus\times\R^{n-1})}.
\end{align*}
We first verify that 
$\HSRcompl{m}{p}=\VR(\R)$ with equivalent norms.
It is straightforward to obtain the embedding $\HSRcompl{m}{p}\embeds \VR(\R)$. To show the reverse embedding, consider $\uf\in \VR(\R)$.
Then
$\norm{\op\nb{\parnorm{k}{\xip}^m}\uf}_p\leq \norm{\uf}_{\VR(\R)}$ and
$\norm{\op\nb{\xi_n^m}\uf}_p\leq \norm{\uf}_{\VR(\R)}$. By Lemma \ref{MultiplierLem_ZeroHomoMultipliers},
\begin{align*}
\multper:\grp\ra\C,\quad\multper(k,\xi):=\frac{\parnorm{k}{\xi}^m}{\parnorm{k}{\xip}^m+\xi_n^m}
\end{align*}
is an $\LR{p}(\grp)$ multiplier. It follows that 
$\norm{\op\nb{\parnorm{k}{\xi}^m}\uf}_p\leq \norm{\uf}_{\VR(\R)}$ and thus the embedding $\VR(\R)\embeds\HSRcompl{m}{p}$.
We conclude $\HSRcompl{m}{p}=\VR(\R)$. It is standard to show existence of an extension operator $\VR(\R_+)\ra\VR(\R)$; see for example 
\cite[Lemma 2.9.1]{Triebel_InterpolationTheory1978}. By restriction to $\halfgrp$, it thus follows that $\HSRcompl{m}{p}(\halfgrp)=\VR(\R_+)$.
The classical trace method now implies that trace operator extends to a bounded operator 
\begin{align*}
\trace_m: \HSRcompl{m}{p}(\halfgrp)\ra 
\prod_{j=0}^{m-1}\bp{\LRcompl{p}(\torus\times\R^{n-1}),\HSRcompl{m}{p}(\torus\times\R^{n-1})}_{1-\frac{j}{m}-\frac{1}{mp},p} 
\end{align*}  
that is onto; see for example \cite[Theorem 1.8.3]{Triebel_InterpolationTheory1978}. 
The existence of a bounded right inverse can be shown as in \cite[Theorem 2.9.1]{Triebel_InterpolationTheory1978}. Again by reiteration
we identify
\begin{align*}
&\bp{\LRcompl{p}(\torus\times\R^{n-1}),\HSRcompl{m}{p}(\torus\times\R^{n-1})}_{1-\frac{j}{m}-\frac{1}{mp},p} \\
&\ =\bp{\LRcompl{p}(\torus\times\R^{n-1}),\HSRcompl{2m}{p}(\torus\times\R^{n-1})}_{\half-\frac{j}{2m}-\frac{1}{2mp},p}
=\WSR{\half-\frac{j}{2m}-\frac{1}{2mp},m-j-\frac{1}{p}}{p}(\torus\times\R^{n-1}).
\end{align*} 
Thus, we conclude \eqref{BS_TraceSpaceBesselPotentialLemTraceOpr}.

Consider now 
$\uf\in\HSRcompl{m}{p}(\grp)$ with $\supp(\uf)\subset\halfgrpclosed$. As above we can identify $\uf$ as an element of $\VR(\R)$, which necessarily satisfies
$\uf(0)=0$. It follows that $\trace_m\bp{\funcrestriction{\uf}{\halfgrp}}=0$.
If on the other hand $\uf\in\HSRcompl{m}{p}(\halfgrp)$ with $\trace_m\bp{\funcrestriction{\uf}{\halfgrp}}=0$, then it is standard to show that
$\uf$ can be approximated by a sequence of functions from $\CRci\np{\halfgrp}$; see for example \cite[Theorem 2.9.1]{Triebel_InterpolationTheory1978}.
Clearly, this sequence also converge in $\HSRcompl{m}{p}(\grp)$. The limit function $U\in\HSRcompl{m}{p}(\grp)$ satisfies $\supp U\subset\halfgrpclosed$
and $\funcrestriction{U}{\halfgrp}=\uf$.
\end{proof}


\section{Constant Coefficients in the Whole- and Half-Space}

In this section, we establish the assertion of Theorem \ref{MainThm_HalfSpace}.
We first treat the whole-space case, and then show the theorem as stated in the half-space case.
Since we consider only the differential operators with constant coefficients in this section, we employ the simplified notation
$\elopfull(D)$ instead of $\elopfull(x,D)$. Replacing the differential operator $D$ with $\xi\in\R^n$, we refer to $\elopfull(\xi)$
as the symbol of $\elopfull(D)$. 

\subsection{The Whole Space}\label{WholeSpaceSection}

We consider first the case of the spatial domain being the whole-space $\R^n$. 
The time-space domain then coincides with the locally abelian group $\grp$, and we can thus employ the Fourier transform $\FT_\grp$
and base the proof on an investigation of the corresponding Fourier multipliers.
 
\begin{lem}\label{wholeop_bdd_lem}
Assume that $\elop$ is properly elliptic and satisfies 
Agmon's condition on the two rays $\e^{i\theta}$ with $\theta=\pm\frac{\pi}{2}$.
Let $p\in\np{1,\infty}$, $s\in\R$ and 
\begin{align}\label{wholeop_bdd_lem_FparopperDef}
&\Fparopper: \dualgrp\ra\C, \qquad \Fparopper(k,\xi):=ik+\elop(\xi).
\end{align}
Then the linear operators
\begin{align*}
\parop:=\op[\Fparopper]:\SR_\bot(\grp)\to \SR'_\bot(\grp),\quad 
\paropinv:=\op[\Fparopper^{-1}]:\SR_\bot(\grp)\to \SR'_\bot(\grp)
\end{align*} 
extend uniquely to bounded operators 
\begin{align}\label{wholeop_bdd_lem_Mappingproperties}
\parop:\HSR{s}{p}_\bot\np{\grp}\to\HSR{s-2m}{p}_\bot\np{\grp},\quad 
\paropinv:\HSR{s-2m}{p}_\bot\np{\grp}\to\HSR{s}{p}_\bot\np{\grp}.
\end{align}
In the setting \eqref{wholeop_bdd_lem_Mappingproperties}, $\paropinv$ is the actual inverse of $\parop$.
\end{lem}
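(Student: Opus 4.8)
The plan is to reduce the claimed mapping properties to the multiplier lemmas already established, the key point being that both $\Fparopper(k,\xi)=ik+\elop(\xi)$ and its reciprocal become parabolically homogeneous multipliers once suitably normalized by the parabolic length $\parnorm{k}{\xi}$. First I would observe that on the support of purely oscillatory distributions we have $k\neq 0$, so $(k,\xi)\in\dualgrp\setminus\set{(0,0)}$ and in particular $\parnorm{k}{\xi}>0$; hence the functions
\begin{align*}
\frac{\Fparopper(k,\xi)}{\parnorm{k}{\xi}^{2m}}=\frac{ik+\elop(\xi)}{\parnorm{k}{\xi}^{2m}},\qquad
\frac{\parnorm{k}{\xi}^{2m}}{\Fparopper(k,\xi)}=\frac{\parnorm{k}{\xi}^{2m}}{ik+\elop(\xi)}
\end{align*}
are well-defined and smooth on $\dualgrp\setminus\set{(0,0)}$, and each extends to a parabolically $0$-homogeneous function on $\R\times\R^n\setminus\set{(0,0)}$ (recall $\elop$ is $2m$-homogeneous in $\xi$ and $ik$ is $2m$-homogeneous under the parabolic scaling $(\eta,\xi)\mapsto(\lambda^{2m}\eta,\lambda\xi)$).

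The main obstacle is showing that the second of these, $\parnorm{k}{\xi}^{2m}/(ik+\elop(\xi))$, is actually smooth and bounded on the relevant set, i.e. that the denominator never vanishes. This is exactly where Agmon's condition enters: if $ik+\elop(\xi)=0$ for some $(k,\xi)$ with $(k,\xi)\neq(0,0)$, then either $\xi=0$, forcing $k=0$ (excluded), or $\xi\neq 0$, in which case $\elop(\xi)=-ik\in\setc{r\e^{i\theta}}{r\geq 0}$ with $\theta=\pm\frac{\pi}{2}$, contradicting Agmon's condition on the two imaginary rays. A compactness argument on the parabolic unit sphere $\setc{(\eta,\xi)}{\parnorm{\eta}{\xi}=1}$ then gives a uniform lower bound $\snorm{i\eta+\elop(\xi)}\geq c>0$, so by homogeneity $\snorm{\Fparopper(k,\xi)}\geq c\,\parnorm{k}{\xi}^{2m}$, and the quotient is bounded; smoothness is clear since the denominator is nonzero. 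Both quotients therefore satisfy the hypotheses of Lemma \ref{MultiplierLem_NegHomoMultipliers} (taking $\alpha=0$), so $\op[\Fparopper\parnorm{k}{\xi}^{-2m}]$ and $\op[\Fparopper^{-1}\parnorm{k}{\xi}^{2m}]$ extend to bounded operators on $\LRcompl{p}(\grp)$.

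With these two facts in hand I would conclude as follows. Write $\parop=\op\bb{\parnorm{k}{\xi}^{2m}\cdot\parnorm{k}{\xi}^{-2m}\Fparopper}$; by definition of the Bessel potential norm, $\op\bb{\parnorm{k}{\xi}^{2m}}$ is an isometry $\HSRcompl{s}{p}\to\HSRcompl{s-2m}{p}$ (more precisely, it maps $\HSRcompl{s}{p}$ onto $\HSRcompl{s-2m}{p}$ isometrically, as $\parnorm{k}{\xi}^{s}=\parnorm{k}{\xi}^{s-2m}\parnorm{k}{\xi}^{2m}$ and the multipliers commute), and composing with the bounded operator $\op\bb{\parnorm{k}{\xi}^{-2m}\Fparopper}$ on $\LRcompl{p}$ — which, again by the commutation of these Fourier multipliers, also acts boundedly $\HSRcompl{s-2m}{p}\to\HSRcompl{s-2m}{p}$ for every $s$ — yields the asserted bound $\parop:\HSRcompl{s}{p}(\grp)\to\HSRcompl{s-2m}{p}(\grp)$. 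The analogous factorization $\paropinv=\op\bb{\parnorm{k}{\xi}^{-2m}}\circ\op\bb{\parnorm{k}{\xi}^{2m}\Fparopper^{-1}}$ gives boundedness $\HSRcompl{s-2m}{p}(\grp)\to\HSRcompl{s}{p}(\grp)$. Uniqueness of the extensions follows from density of $\SRcompl(\grp)$ in $\HSRcompl{s}{p}(\grp)$. Finally, since $\Fparopper\cdot\Fparopper^{-1}=1$ pointwise on $\dualgrp\setminus\set{(0,0)}$ and both extended operators are bounded, $\paropinv\parop=\parop\paropinv=\id$ holds on $\SRcompl(\grp)$ and hence, by density and continuity, on all of $\HSRcompl{s}{p}(\grp)$; thus $\paropinv$ is the genuine inverse of $\parop$ in the setting \eqref{wholeop_bdd_lem_Mappingproperties}.
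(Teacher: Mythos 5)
Your proposal is correct and follows essentially the same route as the paper: normalize $\Fparopper$ and $\Fparopper^{-1}$ by $\parnorm{k}{\xi}^{\pm 2m}$ to obtain parabolically $0$-homogeneous multipliers, invoke Lemma \ref{MultiplierLem_NegHomoMultipliers}, and use Agmon's condition on the rays $\theta=\pm\frac{\pi}{2}$ to guarantee that $i\eta+\elop(\xi)$ does not vanish away from the origin. Your additional detail (the case distinction $\xi=0$ versus $\xi\neq0$ and the compactness argument on the parabolic unit sphere) merely fills in steps the paper leaves implicit.
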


\begin{proof}
Let
\begin{align*}
\mult:\R\times\R^n\minusnullset\ra\C,\quad \mult (\eta,\xi)&:=\frac{i\eta+\elop(\xi)}{\pex^{2m}}.
\end{align*}
Clearly, $\mult$ is parabolically $0$-homogeneous. By Lemma \ref{MultiplierLem_NegHomoMultipliers}, it follows that $\multper:=\funcrestriction{\mult}{\dualgrp}$ is an $\LR{p}(\grp)$-multiplier. 
Since $\Fparopper={\pkx^{2m}\multper(k,\xi)}$, we conclude that
\begin{align*}
\norm{\parop\f}_{s-2m,p}&=\norm{\op\bb{\pkx^{2m}\multper(k,\xi)\f}}_{s-2m,p} = \norm{\op\bb{\multper(k,\xi)}\f}_{s,p}
\leq \Ccn{C}\norm{\f}_{s,p}.
\end{align*}
Since $\elop$ 
satisfies Agmon's condition for $\theta=\pm\frac{\pi}{2}$, it follows that
$\elop(\xi)\notin i\R$ for all $\xi\in\R^n\setminus\set{0}$. Consequently,
$\mult^{-1}:\R\times\R^n\minusnullset\ra\C$ is well-defined and clearly parabolically $0$-homogeneous. We deduce as above that 
\begin{align*}
\norm{\paropinv\f}_{s,p}&=\norm{\op\bb{\pkx^{-2m}\multper(k,\xi)^{-1}\f}}_{s,p} = \norm{\op\bb{\multper(k,\xi)^{-1}}\f}_{s-2m,p}
\leq \Ccn{C}\norm{\f}_{s,p}.
\end{align*}
Consequently, $\parop$ and $\paropinv$ extend uniquely to bounded operators  $\parop:\HSR{s}{p}_\bot\to\HSR{s-2m}{p}_\bot$ 
and $\paropinv:\HSR{s-2m}{p}_\bot\to\HSR{s}{p}_\bot$, respectively. 
Clearly, $\paropinv$ is the actual inverse of $\parop$.
\end{proof}

\begin{thm}\label{MainThm_ws}
Assume  $\elop$ is properly elliptic and satisfies Agmon's condition on the two rays $\e^{i\theta}$ with $\theta=\pm\frac{\pi}{2}$.
Let $s\in\R$ and $p\in (1,\infty)$. There is a constant $\Ccn{C}>0$ such that 
\begin{align}\label{TPADNEst_ws}
\norm{\uf}_{s,p}\leq \Ccn{C} \Bp{\norm{\partial_t\uf+\elopfull\uf}_{s-2m,p}+ \norm{u}_{s-1,p}}.
\end{align}
for all $\uf\in\HSR{s}{p}_\bot(\grp)$. 
\end{thm}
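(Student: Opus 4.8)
The plan is to establish the a priori estimate \eqref{TPADNEst_ws} by using the full symbol $\Felop(\xi):=\elopfull(\xi)$ of the lower-order operator and reducing to the principal part $\elop$, for which Lemma \ref{wholeop_bdd_lem} already gives invertibility. First I would write $\partial_t\uf+\elopfull\uf = \parop\uf + \op[\Felop-\elop]\uf$, where $\parop = \op[ik+\elop(\xi)]$ is the operator from Lemma \ref{wholeop_bdd_lem}, and $\Felop(\xi)-\elop(\xi) = \sum_{|\alpha|<2m} a_\alpha \xi^\alpha$ is a polynomial of degree at most $2m-1$. Setting $\f := \partial_t\uf+\elopfull\uf$, we have $\parop\uf = \f - \op[\Felop-\elop]\uf$, and applying $\paropinv$ (bounded $\HSR{s-2m}{p}_\bot\to\HSR{s}{p}_\bot$ by Lemma \ref{wholeop_bdd_lem}) yields
\begin{align*}
\norm{\uf}_{s,p} \leq \Ccn{C}\norm{\f}_{s-2m,p} + \Ccn{C}\norm{\op[\Felop-\elop]\uf}_{s-2m,p}.
\end{align*}
So the task is to absorb the second term: I need $\norm{\op[\Felop-\elop]\uf}_{s-2m,p} \leq \Ccn{C}\norm{\uf}_{s-1,p} + \tfrac12\norm{\uf}_{s,p}$, or something that can be handled by an interpolation/Ehrling argument.

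The key estimate is therefore that the lower-order symbol $\Felop(\xi)-\elop(\xi)$, which has degree $\leq 2m-1$, is controlled by $\pkx^{2m-1}$ up to a constant depending only on the coefficients, and more precisely that the multiplier $(\Felop(\xi)-\elop(\xi))/\pkx^{2m-1}$ is a bounded $\LR{p}_\bot(\grp)$-multiplier. Each monomial $\xi^\alpha$ with $|\alpha|\le 2m-1$ satisfies $|\xi^\alpha|\le \pkx^{|\alpha|}\le C(1+\pkx^{2m-1})$ on the relevant range, but to get an actual multiplier bound I would instead observe that $\xi^\alpha \pkx^{-(2m-1)}$ restricted to $\dualgrp$ is (after multiplying by the cutoff $1-\delta_\Z$, which is harmless on purely oscillatory distributions) the restriction of a parabolically $(|\alpha|-(2m-1))$-homogeneous function with $|\alpha|-(2m-1)\le 0$, hence an $\LR{p}_\bot(\grp)$-multiplier by Lemma \ref{MultiplierLem_NegHomoMultipliers}. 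Consequently
\begin{align*}
\norm{\op[\Felop-\elop]\uf}_{s-2m,p} = \norm{\op\bb{\tfrac{\Felop-\elop}{\pkx^{2m-1}}}\,\op[\pkx^{2m-1}]\uf}_{s-2m,p}\leq \Ccn{C}\,\norm{\op[\pkx^{2m-1}]\uf}_{s-2m,p} = \Ccn{C}\,\norm{\uf}_{s-1,p},
\end{align*}
which is exactly the lower-order norm appearing on the right-hand side of \eqref{TPADNEst_ws}. Combining this with the previous display gives \eqref{TPADNEst_ws} directly, with no need for an interpolation trick to absorb anything into the left-hand side.

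The main obstacle — really the only subtle point — is justifying the multiplier property of $(\Felop(\xi)-\elop(\xi))\pkx^{-(2m-1)}$ on $\dualgrp$: one must check that near $\xi=0$ (but $k\neq 0$, which is where purely oscillatory distributions live) the function is smooth and that its parabolic homogenization is $\alpha$-homogeneous with $\alpha\le 0$, so Lemma \ref{MultiplierLem_NegHomoMultipliers} applies; for monomials $\xi^\alpha$ the homogenization is $\xi^\alpha\parnorm{\eta}{\xi}^{-(2m-1)}$, which is indeed parabolically $(|\alpha|-2m+1)$-homogeneous and smooth away from $(0,0)$, so the hypotheses are met. (One should also note that $\op[\pkx^{2m-1}]$ maps $\HSR{s}{p}_\bot\to\HSR{s-2m+1}{p}_\bot$ isometrically by definition of the Bessel potential norm, so $\norm{\op[\pkx^{2m-1}]\uf}_{s-2m,p}=\norm{\uf}_{s-2m+(2m-1),p}=\norm{\uf}_{s-1,p}$.) Everything else — the splitting of the operator, the boundedness of $\paropinv$, the identification of the Bessel potential norms — is supplied by Lemma \ref{wholeop_bdd_lem}, Lemma \ref{MultiplierLem_NegHomoMultipliers}, and the definition of $\HSR{s}{p}_\bot(\grp)$, so the proof is short.
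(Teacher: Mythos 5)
Your proposal is correct and follows essentially the same route as the paper: invoke Lemma \ref{wholeop_bdd_lem} to control $\norm{\uf}_{s,p}$ by $\norm{\parop\uf}_{s-2m,p}$, then bound the lower-order remainder $\norm{\op[\elopfull-\elop]\uf}_{s-2m,p}$ by $\norm{\uf}_{s-1,p}$ via the homogeneity-based multiplier lemma. The paper merely says ``a similar multiplier argument'' for that last step, whereas you spell out the factorization through $\op[\pkx^{2m-1}]$ and verify the non-positive parabolic homogeneity of $\xi^\alpha\pkx^{-(2m-1)}$ explicitly, which is exactly what is intended.
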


\begin{proof}
Since $\parop\uf= \partial_t\uf + \elop\uf$, we employ Lemma \ref{wholeop_bdd_lem} to estimate
\begin{align*}
\norm{\uf}_{s,p} \leq 
\Ccn{C}\norm{\partial_t\uf + \elop\uf}_{s-2m,p} 
\leq \Ccn{C}\norm{\partial_t\uf + \elopfull\uf}_{s-2m,p} + \norm{\bb{\elopfull-\elop}\uf}_{s-2m,p}. 
\end{align*}
Since the differential operator $\elopfull-\elop$ contains derivatives of at most of order $2m-1$, 
we conclude \eqref{TPADNEst_ws} by a similar multiplier argument as in the proof of Lemma \ref{wholeop_bdd_lem}.
\end{proof}

\subsection{The Half Space with Dirichlet Boundary Condition}\label{HalfSpaceDirichletBoundaryCondSection}

In the next step, we consider the case of the spatial domain being the half-space $\halfspace$ and boundary operators corresponding to Dirichlet boundary 
conditions. 
As in the whole-space case, we shall work with the symbol of $\partial_t+\elop$. In the following lemma, we collect its key properties.

\begin{lem}\label{SymbolPropertiesLem}
Assume  $\elop$ is properly elliptic and satisfies Agmon's condition on the two rays $\e^{i\theta}$ with $\theta=\pm\frac{\pi}{2}$. Let 
\begin{align*}
\Fparop:\R\times\R^n\ra\C,\quad \Fparop(\eta,\xi',\xi_n):=i\eta+\elop(\xi',\xi_n).
\end{align*}
\begin{enumerate}[(1)]
\item\label{SymbolPropertiesLem_PropNumberOfRoots} For every $(\eta,\xi')\in{\R\times\R^{n-1}\minusnullset}$ the complex polynomial 
$z\mapsto\Fparop(\eta,\xi',z)$ has exactly 
$m$ roots $\rho_j^+(\eta,\xi')\in\C_-$ in the upper complex plane, and 
$m$ roots $\rho_j^-(\eta,\xi')\in\C_+$ in the lower complex plane ($j\in\set{1,\ldots,m})$.
\item The functions
\begin{align}\label{FparopDef}
\begin{aligned}
&\Fparop_\pm:\R\times\R^n\setminus\setc{(\eta,\xi)}{(\eta,\xip)=(0,0)} \ra\C,\\
&\Fparop_\pm(\eta,\xi):=\prod_{j=1}^{m}\bp{\xi_n-\rho_j^\pm(\eta,\xi')}
\end{aligned}
\end{align}
are parabolically $m$-homogeneous.
\item The coefficients of the polynomials
$z\mapsto\Fparop_\pm(\eta,\xi',z)$, more specifically the functions
$\Fparopcoef^\pm_\alpha: \R\times\R^{n-1}\minusnullset\ra\C$ $(\alpha=0,\ldots,m)$ 
with the property that 
\begin{align}\label{SymbolPropertiesLem_FparopAsSumWithCoefficients}
\Fparop_\pm(\eta,\xi',\xin) = \sum_{\alpha=0}^m \Fparopcoef^\pm_\alpha(\eta,\xip)\xin^{m-\alpha},
\end{align}
are analytic. Moreover, $\Fparopcoef^\pm_\alpha$ is parabolically $\alpha$-homogeneous.
\end{enumerate}
\end{lem}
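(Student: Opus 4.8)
The plan is to verify the three claims in order, exploiting the fact that everything flows from the structure of $\Fparop$ as a polynomial in $\xi_n$ whose coefficients are parabolically homogeneous in $(\eta,\xi')$.

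\textbf{Claim (1).} Fix $(\eta,\xi')\in\R\times\R^{n-1}\setminus\set{(0,0)}$. First I would observe that the leading coefficient of $z\mapsto\Fparop(\eta,\xi',z)$ is the coefficient of $\xi_n^{2m}$ in $\elop(\xi',\xi_n)$, which by proper ellipticity (apply Definition \ref{TPADN_ProperlyElliptic} with $\xitan=\xi'$ extended trivially and $\xi=e_n$, or simply note $\elop(0,\ldots,0,1)\neq 0$) is nonzero; hence the polynomial has degree exactly $2m$ and $2m$ roots counted with multiplicity. Next I would show no root is real: if $z\in\R$ were a root, then $i\eta+\elop(\xi',z)=0$, i.e. $\elop(\xi',z)=-i\eta\in i\R$. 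Since $(\eta,\xi')\neq(0,0)$, the vector $(\xi',z)\in\R^n$ is nonzero (if $(\xi',z)=0$ then $\xi'=0$ and the equation forces $\eta=0$). But Agmon's condition on the rays $\theta=\pm\frac\pi2$ says precisely $\elop(\zeta)\notin i\R\setminus\set{0}$ for $\zeta\in\R^n\setminus\set 0$, and proper ellipticity gives $\elop(\zeta)\neq 0$; together $\elop(\zeta)\notin i\R$ for all $\zeta\neq 0$, a contradiction. So all $2m$ roots lie in $\C_+\cup\C_-$. Finally, to get exactly $m$ in each half-plane, I would use the homotopy argument sketched in the text preceding Definition \ref{TPADN_AgmonCondAB}: connect $(\eta,\xi')$ to a configuration $(0,\xi')$ with $\xi'\neq 0$ along a path in $\R\times\R^{n-1}\setminus\set{(0,0)}$; along this path no root crosses the real axis (by the argument just given), so the number of roots in $\C_+$ is constant; and at $(0,\xi')$ the polynomial is $z\mapsto\elop(\xi',z)$, which by proper ellipticity (Definition \ref{TPADN_ProperlyElliptic}, with $\xitan=(\xi',0)$ and $\xi=e_n$ linearly independent) has exactly $m$ roots in $\C_+$ and $m$ in $\C_-$. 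Note the sign/naming convention in the statement: $\rho_j^+\in\C_-$ and $\rho_j^-\in\C_+$, which I would simply adopt. The continuity of roots on coefficients (already invoked in the paper) justifies the constancy.

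\textbf{Claim (2).} Here I would use parabolic homogeneity of $\elop$-type symbols: $\elop$ is $2m$-homogeneous in the ordinary sense, so $\Fparop(\lambda^{2m}\eta,\lambda\xi)=i\lambda^{2m}\eta+\elop(\lambda\xi)=\lambda^{2m}\Fparop(\eta,\xi)$, i.e. $\Fparop$ is parabolically $2m$-homogeneous. Consequently the roots scale: if $\Fparop(\eta,\xi',z)=0$ then $\Fparop(\lambda^{2m}\eta,\lambda\xi',\lambda z)=\lambda^{2m}\Fparop(\eta,\xi',z)=0$, and since $\Im(\lambda z)$ has the same sign as $\Im z$ for $\lambda>0$, the labelled roots satisfy $\rho_j^\pm(\lambda^{2m}\eta,\lambda\xi')=\lambda\,\rho_j^\pm(\eta,\xi')$; this also shows the labelling can be chosen consistently (continuously) under scaling. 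Then $\Fparop_\pm(\lambda^{2m}\eta,\lambda\xi)=\prod_{j=1}^m(\lambda\xi_n-\lambda\rho_j^\pm(\eta,\xi'))=\lambda^m\Fparop_\pm(\eta,\xi)$, which is parabolic $m$-homogeneity. One should also note the domain is scale-invariant: $(\eta,\xi')\neq(0,0)$ iff $(\lambda^{2m}\eta,\lambda\xi')\neq(0,0)$.

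\textbf{Claim (3).} The coefficients $\Fparopcoef^\pm_\alpha$ are, up to sign, the elementary symmetric polynomials of degree $\alpha$ in $\rho_1^\pm(\eta,\xi'),\ldots,\rho_m^\pm(\eta,\xi')$. For the homogeneity I would just combine \eqref{SymbolPropertiesLem_FparopAsSumWithCoefficients} with part (2): substituting $(\lambda^{2m}\eta,\lambda\xi',\lambda\xin)$ and using $\Fparop_\pm(\lambda^{2m}\eta,\lambda\xi)=\lambda^m\Fparop_\pm(\eta,\xi)$, matching powers of $\xin$ gives $\Fparopcoef^\pm_\alpha(\lambda^{2m}\eta,\lambda\xi')\,\lambda^{m-\alpha}=\lambda^m\Fparopcoef^\pm_\alpha(\eta,\xi')$, hence $\Fparopcoef^\pm_\alpha$ is parabolically $\alpha$-homogeneous. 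For analyticity (real-analyticity in $(\eta,\xi')$ on the punctured domain), I would argue that $\Fparop_+$ and $\Fparop_-$ arise from a factorization of $\Fparop$ induced by the spectral separation of the roots into $\C_+$ and $\C_-$: fix a point and choose a smooth simple closed contour $\Gamma_+$ in $\C_-$ enclosing exactly the $m$ roots $\rho_j^+$ and no others (possible by Claim (1), and it can be held fixed on a neighborhood by continuity of the roots); then the coefficients of $\Fparop_+$ are given by contour integrals
\begin{align*}
\Fparopcoef^+_\alpha(\eta,\xi')=\frac{(-1)^\alpha}{2\pi i}\oint_{\Gamma_+}\ e_\alpha\text{-type integrand involving }\frac{\partial_z\Fparop(\eta,\xi',z)}{\Fparop(\eta,\xi',z)}\dz,
\end{align*}
or more cleanly, the monic polynomial $\Fparop_+(\eta,\xi',\cdot)$ equals $\exp$ of a contour integral of $\log$ against $\partial_z\Fparop/\Fparop$ — in any case each $\Fparopcoef^+_\alpha$ is a contour integral of a function jointly real-analytic in $(\eta,\xi')$ and holomorphic in $z$, and differentiation under the integral sign yields real-analyticity; the analogous statement holds for $\Fparopcoef^-_\alpha$ with a contour $\Gamma_-$ in $\C_+$. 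Equivalently one could cite that the roots, being simple in each half-plane away from coincidences, depend real-analytically on the coefficients via the implicit function theorem, and symmetric functions of analytic functions are analytic — but the contour-integral/spectral-projection route is cleanest because it handles possible multiplicities without case distinctions.

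\textbf{Main obstacle.} The genuinely delicate point is Claim (1)'s count of $m$ roots in each half-plane: it is not immediate from Agmon's condition alone but requires combining it with proper ellipticity to rule out real roots and then running the connectedness/homotopy argument in $\R\times\R^{n-1}\setminus\set{(0,0)}$ back to the purely spatial symbol where proper ellipticity directly supplies the split. A secondary subtlety is ensuring the root labels $\rho_j^\pm$ can be chosen so that $\Fparop_\pm$ and its coefficients inherit genuine (real-)analyticity across points where roots collide; this is why I would phrase the factorization via spectral projections / contour integrals rather than via individual roots, so that only the two \emph{groups} of roots — which never collide with each other since they live in disjoint half-planes — need to be tracked.
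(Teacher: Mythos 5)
Your proposal is correct and follows essentially the same route as the paper: part (1) via continuity of the roots, Agmon's condition to exclude real roots, and proper ellipticity at $\eta=0$ to fix the count; part (2) via parabolic $1$-homogeneity of the roots; and part (3) via the classical contour-integral/symmetric-function argument, which is exactly the ``classical argument'' the paper delegates to Tanabe. You merely supply more detail (the nonvanishing leading coefficient, the explicit homotopy in $\R\times\R^{n-1}\setminus\set{(0,0)}$, and the spectral-projection formulation of analyticity), all of which is sound.
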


\begin{proof}\mbox{}
\begin{enumerate}[(1)]
\item Since $\elop$ is properly elliptic, the polynomial $z\mapsto\Fparop(0,\xi',z)$ has exactly m roots in the upper and lower complex plane, respectively. 
Recall that $\elop(x,\xi)\notin i\R$ for all $\xi\in\R^n\setminus\set{0}$. Since the roots of a polynomial depend continuously on the polynomial's coefficients, we deduce part (\ref{SymbolPropertiesLem_PropNumberOfRoots}) of the lemma.
\item Since $\Fparop$ is parabolically $2m$-homogeneous, the roots $\rho_j^\pm$ are parabolically $1$-homo\-geneous. It follows that $\Fparop_\pm$
is parabolically $m$-homogeneous. 
\item The analyticity of the coefficients $\Fparopcoef^\pm_\alpha$ follows by a classical argument; see for example \cite[Chapter 4.4]{TanabeBook}.
The coefficient $\Fparopcoef^\pm_\alpha$ being parabolically $\alpha$-homogeneous is a direct consequence of $\Fparop_\pm$ being $m$-homogeneous. \qedhere
\end{enumerate}
\end{proof}

\begin{lem}\label{halfop_bdd_lem}
Assume  $\elop$ is properly elliptic and satisfies Agmon's condition on the two rays $\e^{i\theta}$ with $\theta=\pm\frac{\pi}{2}$.
Put $\Fparopper_\pm:=\Fparop_\pm|_{\dualgrp}$, where $\Fparop_\pm$ is defined by \eqref{FparopDef}. 
 Let $p\in\np{1,\infty}$ and $s\in\R$. Then the linear operators
\begin{align*}
\paroppm:=\op[\Fparopper_\pm]:\SR_\bot(\grp)\to \SR'_\bot(\grp),\quad
\paroppminv:=\op[\Fparopper_\pm^{-1}]:\SR_\bot(\grp)\to \SR'_\bot(\grp)
\end{align*}
 extend uniquely to bounded and mutually inverse operators $\paroppm:\HSR{s}{p}_\bot(\grp)\to\HSR{s-m}{p}_\bot(\grp)$ and 
$\paroppminv:\HSR{s-m}{p}_\bot(\grp)\to\HSR{s}{p}_\bot(\grp)$, respectively.
\end{lem}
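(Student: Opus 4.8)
The plan is to follow the proof of Lemma \ref{wholeop_bdd_lem} almost verbatim; the only genuinely new issue is that the symbol $\Fparop_\pm$ is defined only off the set $\setc{(\eta,\xi)}{(\eta,\xi')=(0,0)}$ rather than off the origin, which prevents a direct appeal to Lemma \ref{MultiplierLem_NegHomoMultipliers} and forces a small detour through the cut-off construction used in its proof. First I would record two facts about $\Fparop_\pm$, both supplied by Lemma \ref{SymbolPropertiesLem}. By part~(\ref{SymbolPropertiesLem_PropNumberOfRoots}) the roots $\rho_j^\pm(\eta,\xi')$ have nonzero imaginary part, so $\Fparop_\pm(\eta,\xi',\xi_n)=\prod_{j=1}^{m}\bp{\xi_n-\rho_j^\pm(\eta,\xi')}$ never vanishes for real $(\eta,\xi)$ with $(\eta,\xi')\neq(0,0)$. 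By parts~(2)--(3), $\Fparop_\pm$ is parabolically $m$-homogeneous and, being a polynomial in $\xi_n$ whose coefficients $\Fparopcoef^\pm_\alpha$ are analytic on $\R\times\R^{n-1}\minusnullset$, it is smooth on $\setc{(\eta,\xi)}{(\eta,\xi')\neq(0,0)}$. Hence $\mult_\pm:=\Fparop_\pm/\pex^{m}$ and its reciprocal $\mult_\pm^{-1}=\pex^{m}/\Fparop_\pm$ are smooth and parabolically $0$-homogeneous on $\setc{(\eta,\xi)}{(\eta,\xi')\neq(0,0)}$.

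Next, choose $\cutoff\in\CRi(\R)$ with $\cutoff(\eta)=0$ for $\snorm{\eta}<\frac{\pi}{\per}$ and $\cutoff(\eta)=1$ for $\snorm{\eta}\geq\perf$, and put $\Mmultiplier_\pm:=\cutoff\,\mult_\pm$ and $\widetilde\Mmultiplier_\pm:=\cutoff\,\mult_\pm^{-1}$. Since $\cutoff$ vanishes identically in a neighborhood of $\eta=0$, both functions are well-defined, smooth and of at most polynomial growth on all of $\R\times\R^{n}$, and exactly the estimate carried out in the proof of Lemma \ref{MultiplierLem_NegHomoMultipliers} --- parabolic $0$-homogeneity of $\mult_\pm^{\pm1}$ bounds the relevant products of variables and derivatives --- shows that $\Mmultiplier_\pm$ and $\widetilde\Mmultiplier_\pm$ satisfy the hypotheses of Marcinkiewicz's multiplier theorem and are therefore $\LR{p}(\R\times\R^{n})$-multipliers. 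By the Transference Principle \cite[Theorem B.2.1]{EdG77}, $\multper_\pm$ and $\widetilde\multper_\pm$, the restrictions of $\Mmultiplier_\pm$ and $\widetilde\Mmultiplier_\pm$ to $\dualgrp$, are $\LR{p}(\grp)$-multipliers.

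Finally I would assemble the estimates exactly as in Lemma \ref{wholeop_bdd_lem}. Because $\cutoff(k)=1$ for every $k\in\perf\Z\setminus\set{0}$ while all the functions involved vanish at $k=0$, we have on $\dualgrp$ the identities $\Fparopper_\pm(1-\delta_\Z)=\pkx^{m}\,\multper_\pm$ and $\Fparopper_\pm^{-1}(1-\delta_\Z)=\pkx^{-m}\,\widetilde\multper_\pm$. For $\f\in\HSR{s}{p}_\bot(\grp)$ one has $\f=\projcompl\f$, hence $\FT_\grp\f$ is supported in $\setc{(k,\xi)}{k\neq 0}$, so that
\begin{align*}
\norm{\paroppm\f}_{s-m,p}=\norm{\op\bb{\multper_\pm}\op\bb{\pkx^{s}}\f}_p\leq\Ccn{C}\,\norm{\f}_{s,p},\qquad
\norm{\paroppminv\f}_{s,p}=\norm{\op\bb{\widetilde\multper_\pm}\op\bb{\pkx^{s-m}}\f}_p\leq\Ccn{C}\,\norm{\f}_{s-m,p}.
\end{align*}
Since $\SR_\bot(\grp)$ is dense in $\HSR{s}{p}_\bot(\grp)$, the operators extend uniquely to bounded operators with the stated mapping properties, and because $\Fparopper_\pm\,\Fparopper_\pm^{-1}\equiv 1$ on $\setc{(k,\xi)\in\dualgrp}{k\neq 0}$ the extensions are mutually inverse on $\SR_\bot(\grp)$, hence on all of $\HSR{s}{p}_\bot(\grp)$ by density. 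The only obstacle worth flagging is the one mentioned at the start: since $\Fparop_\pm$ is defined only away from $\setc{(\eta,\xi)}{(\eta,\xi')=(0,0)}$, one must run the cut-off argument of Lemma \ref{MultiplierLem_NegHomoMultipliers} by hand rather than quote that lemma; everything else is routine.
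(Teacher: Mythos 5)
Your overall architecture (cut off near $\eta=0$, verify Marcinkiewicz's conditions by hand, transfer to $\grp$, then argue as in Lemma \ref{wholeop_bdd_lem}) matches the paper's, but the step you compress into the clause ``parabolic $0$-homogeneity of $\mult_\pm^{\pm1}$ bounds the relevant products of variables and derivatives'' is exactly where the difficulty lies, and as stated it is not a valid argument. The function $\mult_\pm=\Fparop_\pm/\pex^{m}$ is singular along the entire line $\setc{(\eta,\xi)}{(\eta,\xip)=(0,0)}$, which meets the unit parabolic sphere; the usual implication ``$0$-homogeneous and smooth off the origin $\Rightarrow$ Marcinkiewicz bounds'' rests on compactness of the unit sphere and therefore does not apply here. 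Your cut-off in $\eta$ alone does not remove the problem: points with $\snorm{\eta}\geq\pi/\per$, $\xip\to0$ and $\snorm{\xi_n}\to\infty$ rescale to points arbitrarily close to the singular line, so neither the derivative bounds for $\mult_\pm$ nor --- more seriously --- the boundedness of $\cutoff\,\mult_\pm^{-1}$ is automatic. What actually makes the verification work is the polynomial structure \eqref{SymbolPropertiesLem_FparopAsSumWithCoefficients}: the paper writes $\mult$ as the sum \eqref{halfop_bdd_lem_FparopSeparationTrick} of products $\mult_1^\alpha\cdot\mult_2^\alpha$, where $\mult_1^\alpha=\Fparopcoef^\pm_\alpha(\eta,\xip)/\pexp^{\alpha}$ is $0$-homogeneous and smooth off the origin of the $(\eta,\xip)$-space and $\mult_2^\alpha=\xi_n^{m-\alpha}\pexp^{\alpha}/\pex^{m}$ is $0$-homogeneous and smooth off the origin of the full $(\eta,\xi)$-space, so that Lemma \ref{MultiplierLem_NegHomoMultipliers} applies to each factor separately. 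You record this structure in your first paragraph but never use it where it is needed.

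The gap is most acute for $\paroppminv$. To know that $\cutoff(\eta)\,\pex^{m}/\Fparop_\pm(\eta,\xi)$ is even bounded (let alone of ``polynomial growth'', as you assert) you need the uniform lower bound $\snorm{\Fparop_\pm(\eta,\xi)}\geq c\,\pex^{m}$, i.e.\ \eqref{halfop_bdd_lem_FparopBoundBelow}, and this does not follow from the pointwise non-vanishing you note at the outset: by homogeneity it amounts to positivity of the infimum of $\snorm{\Fparop_\pm}$ over the \emph{non-compact} set $\setc{\parnorm{\teta}{\txi}=1}{(\teta,\txip)\neq(0,0)}$. The paper supplies the missing ingredient: since $\rho_j^\pm(\eta,\xip)\to0$ as $(\eta,\xip)\to(0,0)$, one has $\Fparop_\pm(\teta,\txi)\to\txi_n^{m}$ with $\snorm{\txi_n}=1$ as the singular line is approached on the unit sphere, so the infimum is in effect taken over a compact set on which the continuously extended symbol does not vanish. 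You need to add both this lower bound and the factorized verification of the Marcinkiewicz conditions; the remainder of your argument (transference, density, mutual inverses on the purely oscillatory subspace) is fine and agrees with the paper.
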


\begin{proof}
The assertion of the lemma follows as in the proof of Lemma \ref{wholeop_bdd_lem}, provided we can show that the restriction to $\dualgrp$ of the
multiplier  
\begin{align*}
\mult:\R\times\R^n\setminus\setc{(\eta,\xi)}{(\eta,\xip)=(0,0)}\ra\C,\quad  \mult(\eta,\xi)&:=\frac{\Fparop_\pm(\eta,\xi)}{\pex^{m}}
\end{align*}
and its inverse are $\LRcompl{p}(\grp)$-multipliers. Although $\mult$ is parabolically $0$-homogeneous, we cannot apply 
Lemma \ref{MultiplierLem_NegHomoMultipliers} directly since $\mult$ is not defined on all of $\R\times\R^n\setminus\set{(0,0)}$.
Instead, we recall \eqref{SymbolPropertiesLem_FparopAsSumWithCoefficients} and observe that 
\begin{align}\label{halfop_bdd_lem_FparopSeparationTrick}
\mult(\eta,\xi) = \sum_{\alpha=0}^m \frac{ \Fparopcoef^\pm_\alpha(\eta,\xip)}{\pexp^\alpha} \cdot \frac{\xi_n^{m-\alpha}\pexp^\alpha}{\pex^m}
=:\sum_{\alpha=0}^m\mult_1^\alpha \cdot \mult_2^\alpha.
\end{align}
Owing to the $\alpha$-homogeneity of $\Fparopcoef^\pm_\alpha$, Lemma \ref{MultiplierLem_NegHomoMultipliers} yields that both 
$\funcrestriction{{\mult_1^\alpha}}{\dualgrp}$ and $\funcrestriction{{\mult_2^\alpha}}{\dualgrp}$ are  $\LR{p}(\grp)$-multipliers. Consequently, also $\mult$ 
is an $\LR{p}(\grp)$-multiplier, and we thus conclude as in the proof of Lemma \ref{wholeop_bdd_lem} that $\paroppm$
extends uniquely to a bounded operator $\paroppm:\HSR{s}{p}_\bot(\grp)\to\HSR{s-m}{p}_\bot(\grp)$.

To show the corresponding property for $\paroppminv$, we introduce a cut-off function $\cutoff\in\CRi(\R)$  with 
$\cutoff(\eta)=0$ for $\snorm{\eta}<\frac{\pi}{\per}$ and $\cutoff(\eta)=1$ for $\snorm{\eta}\geq \perf$.
We claim that 
\begin{align*}
\tmult:\R\times\R^n\ra\C,\quad  \tmult(\eta,\xi):= \cutoff(\eta) \frac{\pex^m}{\Fparop_\pm(\eta,\xi)}
\end{align*}
is an $\LR{p}\np{\R\times\R^n}$-multiplier. Indeed, utilizing that $\Fparop_\pm$ is $m$-homogeneous, we see 
that $\Fparop_\pm$ can be bounded below by
\begin{align}\label{halfop_bdd_lem_FparopBoundBelow}
\snorm{\Fparop_\pm(\eta,\xi)} \geq \pex^m\, \inf_{\parnorm{\teta}{\txi}=1,(\teta,\txip)\neq(0,0)} \snorm{\Fparop_\pm(\teta,\txi)},
\end{align}
where the infimum above is strictly positive due to the roots in definition \eqref{FparopDef} satisfying $\lim_{(\eta,\xip)\ra(0,0)}\rho_j^\pm(\eta,\xip)=0$. Using only \eqref{halfop_bdd_lem_FparopBoundBelow} and 
the $\alpha$-homogeneity of the coefficients $\Fparopcoef^\pm_\alpha$ as in \eqref{halfop_bdd_lem_FparopSeparationTrick}, it is now straightforward to verify that $\tmult$ satisfies the condition of the 
Marcinkiewicz's multiplier theorem (\cite[Chapter IV, \S 6]{Stein70}). Thus, $\tmult$ is an
$\LR{p}(\R\times\R^n)$-multiplier and by \cite[Theorem B.2.1]{EdG77} $\funcrestriction{{\tmult}}{{\dualgrp}}$ therefore an $\LR{p}(\grp)$-multiplier.
Since the restriction of the corresponding operator $\op\bb{\funcrestriction{{\tmult}}{{\dualgrp}}}:\LRcompl{p}(\grp)\ra\LRcompl{p}(\grp)$ to the subspace of purely periodic functions coincides with $\op\bb{\funcrestriction{{\mult^\inverse}}{{\dualgrp}}}:\LRcompl{p}(\grp)\ra\LRcompl{p}(\grp)$,
we deduce as in the proof of Lemma \ref{wholeop_bdd_lem} that $\paroppminv$ extends uniquely to a bounded operator
$\paroppminv:\HSR{s-m}{p}_\bot(\grp)\to\HSR{s}{p}_\bot(\grp)$.
\end{proof}

The lemma above provides us with at decomposition of the differentiable operators in \eqref{wholeop_bdd_lem_Mappingproperties}, that is, for 
$\parop:\HSR{s}{p}_\bot(\grp)\to\HSR{s-2m}{p}_\bot(\grp)$ and $\paropinv:\HSR{s-2m}{p}_\bot(\grp)\to\HSR{s}{p}_\bot(\grp)$ the decompositions $\parop=\paropp\paropm=\paropm\paropp$ and $\paropinv=\paroppinv\paropminv=\paropminv\paroppinv$
are valid provided $\parop$ is normalized accordingly.  Employing the Paley-Wiener Theorem, 
we shall now show that the operators $\parop_\pm$ and $\paropinv_\pm$ ``respect'' the support of a function in the upper (lower) half-space.

\begin{lem}\label{paley_wiener_lem}
Assume  $\elop$ is properly elliptic and satisfies Agmon's condition on the two rays $\e^{i\theta}$ with $\theta=\pm\frac{\pi}{2}$.
 Let $p\in\np{1,\infty}$, $s\in\R$ and consider $\uf\in \HSR{s}{p}_\bot(\grp)$.
 \begin{enumerate}
  \item\label{paley_wiener_lem_i} If $\supp\uf\subset\halfgrpclosed$, then $\supp\paropp\uf\subset\halfgrpclosed$ and $\supp\paroppinv\uf\subset\halfgrpclosed$.
  \item\label{paley_wiener_lem_ii} If $\supp\uf\subset\lhalfgrpclosed$, then $\supp\paropm\uf\subset\lhalfgrpclosed$ and $\supp\paropminv\uf\subset\lhalfgrpclosed$.
 \end{enumerate}
\end{lem}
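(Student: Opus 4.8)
The statement is a support-propagation property for the operators $\paropp, \paropm$ and their inverses, and the natural tool is the Paley–Wiener Theorem (Proposition \ref{paley_wiener_prop}) applied in the single variable $x_n$, exactly as in the proof of Proposition \ref{TPBesselEqualsTPSobHalfspacecaseProp}. The plan is to show statement \eqref{paley_wiener_lem_i}; statement \eqref{paley_wiener_lem_ii} then follows by the obvious symmetry $x_n\mapsto -x_n$ (which swaps $\C_+$ and $\C_-$, hence swaps $\rho_j^+$ and $\rho_j^-$, hence swaps $\paropp$ and $\paropm$). So I focus on $\paropp$ and $\paroppinv$ under the hypothesis $\supp\uf\subset\halfgrpclosed$.

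First I would reduce to a density statement. By Lemma \ref{halfop_bdd_lem}, $\paropp$ and $\paroppinv$ are bounded between the relevant Bessel potential spaces, and $\SRcompl(\grp)$ is dense there; moreover, the subspace of Schwartz–Bruhat functions supported in $\halfgrpclosed$ is dense in $\setc{\uf\in\HSR{s}{p}_\bot(\grp)}{\supp\uf\subset\halfgrpclosed}$ (one truncates and mollifies in the $x_n$-direction only, shifting slightly into the open half-space). Since taking supports behaves well under $\HSR{s}{p}$-limits, it suffices to prove: if $\psi\in\SRcompl(\grp)$ with $\supp\psi\subset\halfgrpclosed$, then $\supp\paropp\psi\subset\halfgrpclosed$ and $\supp\paroppinv\psi\subset\halfgrpclosed$.

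For the Schwartz case, the key observation is that $\paropp$ and $\paroppinv$ are, up to the already-established $\LR{p}(\grp)$-multiplier factors, Fourier multipliers in the $\xi_n$-variable whose symbol extends holomorphically and with the right growth to $\C_-$. Indeed, $\Fparopper_+(k,\xi',\xi_n)=\prod_{j=1}^m(\xi_n-\rho_j^+(k,\xi'))$ is for each fixed $(k,\xi')$ a polynomial in $\xi_n$ all of whose roots $\rho_j^+$ lie in $\C_-$; hence $\xi_n\mapsto\Fparopper_+(k,\xi',\xi_n)$ is holomorphic on all of $\C$ and $\xi_n\mapsto\Fparopper_+(k,\xi',\xi_n)^{-1}$ is holomorphic and bounded on the \emph{closed} upper half-plane $\overline{\C_+}$ — which is the relevant region by the conventions in Definition \ref{hardy_def} and Proposition \ref{paley_wiener_prop} (there "$\supp f\subset\R_+$" corresponds to $\ft f$ holomorphic in $\C_-$; here we want support in $x_n\ge 0$, so we need holomorphy of the symbol in $\C_-$). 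The correct pairing is: $\supp_{x_n}\psi\subset[0,\infty)$ iff the partial Fourier transform $\mathcal F_{x_n}\psi$, as a function of $\xi_n$, lies in a Hardy-type space with holomorphic extension to $\C_-$; multiplying by a symbol that is holomorphic and polynomially bounded on $\C_-$ (for $\Fparopper_+$) respectively holomorphic and bounded on $\C_-$ (for $\Fparopper_+^{-1}$, using the lower bound \eqref{halfop_bdd_lem_FparopBoundBelow}) preserves this property, hence preserves support in $x_n\ge 0$. I would carry this out by freezing $(k,\xi')$, writing $\paropp\psi$ and $\paroppinv\psi$ via the representation $\FT^{-1}_\grp[\Fparopper_+^{\pm1}\FT_\grp\psi]$, and noting that for each $(k,\xi')$ the inner operation in $\xi_n$ is covered by Proposition \ref{paley_wiener_prop} applied on the line (with the polynomial growth in $\Fparopper_+$ absorbed by the rapid decay of $\FT_\grp\psi$).

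**Main obstacle.** The delicate point is the uniformity in $(k,\xi')$: the symbol $\Fparopper_+^{-1}$ is holomorphic in $\C_-$ for each fixed $(k,\xi')$, but one must know that the resulting $\xi_n$-multiplier operator acts consistently across all $(k,\xi')$ so that the global object $\paroppinv\psi\in\SRcompl(\grp)$ (or at least in $\HSR{s}{p}_\bot$) genuinely has support in $\halfgrpclosed$. This is handled precisely by the mechanism already used in \eqref{TPBesselEqualsTPSobHalfspacecaseProp_SuppProperty}: Lemma \ref{MultiplierLem_ZeroHomoMultipliers} (or Lemma \ref{MultiplierLem_NegHomoMultipliers}) guarantees $\paroppminv$ is a bounded operator on the relevant $\LR{p}$-scale, and the Paley–Wiener support condition, being stated in terms of the full Fourier transform $\FT_\grp$ and holomorphy in the single variable $\xi_n$ on $\C_-$, transfers directly — one does not need a genuinely multi-variable Paley–Wiener theorem, only the one-dimensional Proposition \ref{paley_wiener_prop} combined with the fact that $(k,\xi')\mapsto\rho_j^+(k,\xi')\in\C_-$ has no boundary values on $i\R\cap(\text{imaginary }\xi_n\text{-axis})$ escaping to $\overline{\C_+}$. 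Thus the proof is essentially a line-by-line adaptation of the argument for \eqref{TPBesselEqualsTPSobHalfspacecaseProp_SuppProperty}, with $(i\xi_n+\pkxp)^{-1}$ replaced by $\Fparopper_+^{\pm1}$, and the only thing to check carefully is that the holomorphic extension of $\Fparopper_+$ (a polynomial in $\xi_n$, hence entire, with roots confined to $\C_-$) and of its reciprocal (bounded on $\overline{\C_+}$ by \eqref{halfop_bdd_lem_FparopBoundBelow}) land in the correct half-plane dictated by the sign conventions in Definition \ref{hardy_def}.
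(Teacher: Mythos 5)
Your proposal is correct and follows essentially the same route as the paper: reduce by density and Lemma \ref{halfop_bdd_lem} to Schwartz--Bruhat functions supported in $\halfgrpclosed$, then for fixed $(k,\xi')$ apply the one-dimensional Paley--Wiener theorem (Proposition \ref{paley_wiener_prop}) in the $\xi_n$-variable, using that the roots $\rho_j^+(k,\xi')$ are confined to one open half-plane so that $\Fparopper_+^{-1}(k,\xi',\cdot)$ extends holomorphically and boundedly to the opposite half-plane. The only cosmetic difference is that for $\paropp$ itself the paper observes it acts as a differential operator in $x_n$ after a partial Fourier transform (hence is trivially support-preserving), whereas you treat it as a polynomially bounded holomorphic multiplier; both work.
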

\begin{proof}
 We shall prove only part \eqref{paley_wiener_lem_i}, for part \eqref{paley_wiener_lem_ii} follows analogously. We employ the notation $\grpprime:=\torus\times\R^{n-1}$ and the canonical decomposition $\FT_\grp=\FT_\grpprime\FT_\R$ of the Fourier transform. In view of Lemma \ref{halfop_bdd_lem}, it suffices to consider only $\uf\in \SR(\grp)$ with $\supp\uf\subset\halfgrpclosed$.
 
 For fixed $\kt\in\perf\Z\setminus\set{0}$ and $\xi'\in\R^{n-1}$, we let $\mathsf{D}(\kt,\xi'):=\FT_{\R}^{-1}\Fparopper_+(\kt,\xi',\cdot)\FT_{\R}$. Since $\Fparopper_+$ is a polynomial with respect to the variable $\xi_n$, $\mathsf{D}(\kt,\xi')$ is a differential operator in $x_n$ and hence $\supp(\mathsf{D}(\kt,\xi')\f)\subset \overline{\R_+}$ for every $\f\in\SR'(\R)$ with $\supp\f\subset\overline{\R_+}$. 
Clearly, $\supp([\FT_\grpprime\uf](\kt,\xi',\cdot))\subset\overline{\R_+}$.
Since $\FT_\grpprime[\paropp\uf](\kt,\xi',\cdot)=[\mathsf{D}(\kt,\xi')\FT_\grpprime\uf](\kt,\xi',\cdot)$, we conclude $\supp\paropp\uf\subset\halfgrpclosed$.
 
 To show the same property for $\paroppinv\uf$, we employ the version of the Paley-Wiener Theorem presented in Proposition \ref{paley_wiener_prop}. Since $\uf\in \SR(\grp)\subset \LR{2}(\grp)$, we immediately obtain that for fixed $\kt\in\perf\Z$ and $\xi'\in\R^{n-1}$, the Fourier transform $[\FT_\grp\uf](\kt,\xi',\cdot)$ is in the Hardy space $\Hardy_+^2(\R)$. Let
 \begin{align*}
  \widetilde{\Fparopper_+^{-1}}(\kt,\xi',\cdot):\CNumbers_+\to\CNumbers,\quad  \widetilde{\Fparopper_+^{-1}}(\kt,\xi',z):=\prod_{j=1}^m(z-\rho_j^+(\kt,\xi'))^{-1}
 \end{align*}
denote the extension of $\Fparopper_+^{-1}(\kt,\xi',\cdot)$ to the lower complex plane. Since all the roots $\rho_j^+$ lie in the upper complex plane, 
this extension is holomorphic and bounded.
It follows that $[\Fparopper_+^{-1}\FT_\grp\uf](\kt,\xi',\cdot)\in \Hardy_+^2(\R)$. Hence, taking the inverse Fourier transform, Proposition \ref{paley_wiener_prop} yields $\supp\paroppinv\uf\subset\halfgrpclosed$.
\end{proof}

The above properties of $\parop_\pm$ and $\paropinv_\pm$ lead to a surprisingly simple representation formula, see \eqref{weak_sol_lem_repformula} below, for the solution $\uf$ to the problem $\partial_t\uf+\elop\uf=\f$ in the half-space $\torus\times\halfspace$ with Dirichlet boundary conditions.  
The problem itself can be formulated elegantly as \eqref{weak_sol_1}.

\begin{lem}\label{weak_sol_lem}
Assume  $\elop$ is properly elliptic and satisfies Agmon's condition on the two rays $\e^{i\theta}$ with $\theta=\pm\frac{\pi}{2}$.
Let $p\in\np{1,\infty}$ and $\f\in \HSR{-m}{p}_\bot(\grp)$. 
Let $\charplus$ denote the characteristic function on $\halfgrp$.
Then
\begin{align}\label{weak_sol_lem_repformula}
  \uf:=\paroppinv\charplus\paropminv\f,
\end{align}
 is the unique solution in $\HSRcompl{m}{p}(\grp)$ to
 \begin{align}\label{weak_sol_1}
  \supp\uf\subset \halfgrpclosed, \qquad \tand \qquad \supp\np{\parop\uf-\f}\subset \lhalfgrpclosed.
 \end{align}
 Moreover, there is a constant $c=c(n,p)>0$ such that
 \begin{align}\label{weak_sol_est}
  \norm{\uf}_{m,p,\halfgrp}\leq \Ccn{C} \norm{\f}_{-m,p,\halfgrp}.
 \end{align}
\end{lem}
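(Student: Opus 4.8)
The plan is to verify that the candidate $\uf := \paroppinv\charplus\paropminv\f$ solves \eqref{weak_sol_1}, is unique, and obeys \eqref{weak_sol_est}. First I would address the well-definedness and the support properties. Note $\paropminv\f\in\HSR{0}{p}_\bot(\grp)=\LRcompl{p}(\grp)$ by Lemma \ref{halfop_bdd_lem}; multiplying by $\charplus$ keeps us in $\LRcompl{p}(\grp)$ and produces a function supported in $\halfgrpclosed$; then $\paroppinv$ maps into $\HSR{m}{p}_\bot(\grp)$, and by Lemma \ref{paley_wiener_lem}\eqref{paley_wiener_lem_i} it preserves the support in $\halfgrpclosed$, so $\supp\uf\subset\halfgrpclosed$. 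For the second condition, observe $\parop=\paropp\paropm$, so
\begin{align*}
\parop\uf-\f = \paropp\paropm\paroppinv\charplus\paropminv\f - \f = \paropp\charplus\paropminv\f - \f = \paropp\np{\charplus-1}\paropminv\f + \np{\paropp-\parop}\dots
\end{align*}
— more cleanly: since $\paropm\paroppinv=\id$ on the relevant spaces, $\parop\uf = \paropp\charplus\paropminv\f$, hence $\parop\uf-\f = \paropp\bp{(\charplus-1)\paropminv\f}$. Now $(\charplus-1)\paropminv\f = -\charminus\paropminv\f$ is supported in $\lhalfgrpclosed$ (writing $\charminus$ for the characteristic function of the open lower half-space), and by Lemma \ref{paley_wiener_lem}\eqref{paley_wiener_lem_ii}, $\paropp$ must be replaced by $\paropm$ here — so I should instead write $\parop=\paropm\paropp$ and use that the factor acting last respects the lower half-space. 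The bookkeeping of which factorization order to use, matched against which half of Lemma \ref{paley_wiener_lem}, is the one genuinely delicate point and I would lay it out explicitly: $\parop\uf - \f$ should be expressed so that the operator applied to the $\lhalfgrpclosed$-supported remainder is of $\paropm$-type.

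Next I would prove uniqueness. Suppose $\vf\in\HSRcompl{m}{p}(\grp)$ also solves \eqref{weak_sol_1}; set $\wf:=\uf-\vf$. Then $\supp\wf\subset\halfgrpclosed$ and $\supp\parop\wf\subset\lhalfgrpclosed$. Apply $\paroppinv\circ(\cdot)$: since $\parop=\paroppm$ in the appropriate order, $\paropm\wf = \paroppinv\parop\wf$ has support in $\lhalfgrpclosed$ by Lemma \ref{paley_wiener_lem}\eqref{paley_wiener_lem_ii}. But also $\wf$ itself is supported in $\halfgrpclosed$, so $\paropp\wf$ is supported in $\halfgrpclosed$ by part \eqref{paley_wiener_lem_i}. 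Hmm — to close this I would argue: $\wf = \paropminv\paropm\wf$ with $\paropm\wf$ supported in $\lhalfgrpclosed$, so by Lemma \ref{paley_wiener_lem}\eqref{paley_wiener_lem_ii} applied to $\paropminv$, $\supp\wf\subset\lhalfgrpclosed$. Combined with $\supp\wf\subset\halfgrpclosed$, this forces $\supp\wf\subset\halfgrpboundary$, a set of measure zero in $\grp$; since $\wf\in\LR{p}(\grp)$ this gives $\wf=0$. (A small care point: one must phrase ``supported on the boundary hyperplane'' correctly for $\LR{p}$ distributions — an $\LR{p}$ function vanishing a.e. off a null set is zero.)

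Finally, the estimate \eqref{weak_sol_est} is essentially a concatenation of the bounds already in hand. Given any extension $F\in\HSR{-m}{p}_\bot(\grp)$ of $\f$ with $\funcrestriction{F}{\halfgrp}=\funcrestriction{\f}{\halfgrp}$, the function $\paropminv F$ need not agree with $\paropminv\f$ off the half-space, but by Lemma \ref{paley_wiener_lem}\eqref{paley_wiener_lem_ii} and the support-sensitivity of $\paropminv$ one checks $\funcrestriction{\charplus\paropminv F}{\halfgrp}$ is independent of the chosen extension, so $\uf$ is well-defined as an element of $\HSR{m}{p}(\halfgrp)$. Then
\begin{align*}
\norm{\uf}_{m,p,\halfgrp} \leq \norm{\paroppinv\charplus\paropminv F}_{m,p} \leq \Ccn{C}\norm{\charplus\paropminv F}_{0,p} \leq \Ccn{C}\norm{\paropminv F}_{0,p} \leq \Ccn{C}\norm{F}_{-m,p},
\end{align*}
using Lemma \ref{halfop_bdd_lem} for $\paroppinv$ (with $s=m$) and for $\paropminv$ (with $s=0$), and that multiplication by $\charplus$ is a contraction on $\LRcompl{p}(\grp)=\HSR{0}{p}_\bot(\grp)$. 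Taking the infimum over all admissible extensions $F$ yields $\norm{\uf}_{m,p,\halfgrp}\leq\Ccn{C}\norm{\f}_{-m,p,\halfgrp}$. I expect the main obstacle to be not any single estimate but the careful matching of the two factorizations $\parop=\paropp\paropm=\paropm\paropp$ against the two support-preservation statements of Lemma \ref{paley_wiener_lem}, together with the verification that $\uf$ as a restriction is independent of the extension used — this is where a sloppy argument would silently fail.
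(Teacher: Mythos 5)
Your treatment of existence, of the support properties (including your self-corrected choice of the factorization $\parop=\paropm\paropp$ so that $\parop\uf-\f=\paropm\bp{(\charplus-\id)\paropminv\f}$ is handled by part (ii) of the Paley--Wiener lemma), and of the estimate via the infimum over extensions $F$ with $\supp(F-\f)\subset\lhalfgrpclosed$ all coincide with the paper's argument.

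The uniqueness step, however, contains a genuine error. You write that $\paropm\wf=\paroppinv\parop\wf$ has support in $\lhalfgrpclosed$ ``by Lemma \ref{paley_wiener_lem}(ii)''. But part (ii) of that lemma concerns $\paropm$ and $\paropminv$; the operator $\paroppinv$ preserves support in the \emph{upper} closed half-space (part (i)), not the lower one --- indeed $\Fparopper_+^{-1}$ extends holomorphically to the half-plane whose Paley--Wiener counterpart is $\overline{\R_+}$, so there is no support-preservation statement for $\paroppinv$ on $\lhalfgrpclosed$-supported distributions. Consequently $\supp\paropm\wf\subset\lhalfgrpclosed$ is not established, and your closing step $\wf=\paropminv\paropm\wf$, which is meant to yield $\supp\wf\subset\lhalfgrpclosed$, cannot be carried out. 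The correct pairing is the opposite one: from $\supp\parop\wf\subset\lhalfgrpclosed$ and part (ii) applied to $\paropminv$ one gets $\supp\paropp\wf=\supp\bp{\paropminv\parop\wf}\subset\lhalfgrpclosed$, while from $\supp\wf\subset\halfgrpclosed$ and part (i) one gets $\supp\paropp\wf\subset\halfgrpclosed$ (a fact you do record). Hence it is $\paropp\wf$, not $\wf$, that is supported on the null set $\halfgrpboundary$; since $\paropp\wf\in\LRcompl{p}(\grp)$ by Lemma \ref{halfop_bdd_lem}, it vanishes, and then $\wf=\paroppinv\paropp\wf=0$. This is the paper's route, and your attempt to localize $\wf$ itself to the boundary hyperplane cannot be salvaged with the lemmas at hand.
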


\begin{proof}
By Lemma \ref{halfop_bdd_lem}, $\paropminv\f\in\LRcompl{p}(\grp)$. Clearly then $\charplus\paropminv\f\in\LRcompl{p}(\grp)$ and trivially
$\supp\np{\charplus\paropminv\f}\subset\halfgrpclosed$. 
Lemma \ref{paley_wiener_lem} now implies 
$\supp\np{\paroppinv\charplus\paropminv\f}\subset\halfgrpclosed$, which concludes the first part of  \eqref{weak_sol_1}.
Since 
$\supp\bp{\np{\charplus-\id}\paropminv\f}\subset\lhalfgrpclosed$, 
Lemma \ref{paley_wiener_lem} implies 
$\supp\bp{\paropm\np{\charplus-\id}\paropminv\f}\subset\lhalfgrpclosed$. However, 
$\parop\uf-\f=\paropm\np{\charplus-\id}\paropminv\f$, whence the second part of \eqref{weak_sol_1} follows.

To show uniqueness, let $\vf\in\HSRcompl{m}{p}$ be a solution to \eqref{weak_sol_1} with $f=0$. Since $\paropp\vf=\paropminv\parop\vf$, Lemma \ref{paley_wiener_lem} yields
$\supp\np{\paropp\vf}\subset\lhalfgrpclosed$. On the other hand, $\supp\vf\subset\halfgrpclosed$ by assumption, whence $\supp\paropp\vf\subset\halfgrpclosed$ by Lemma \ref{paley_wiener_lem}.
Recalling that $\paropp\vf\in\LRcompl{p}(\grp)$ by 
Lemma \ref{halfop_bdd_lem}, we thus deduce $\paropp\vf=0$ and consequently $\vf=0$.
This concludes the assertion of uniqueness.

It remains to show \eqref{weak_sol_est}. 
For $F\in\HSRcompl{-m}{p}(\grp)$ with $\supp\np{F-f}\subset\lhalfgrpclosed$, we can utilize Lemma \ref{halfop_bdd_lem} and Lemma \ref{paley_wiener_lem} as 
above to conclude that $\charplus\paropminv F = \charplus\paropminv f$, which in turn implies $\uf=\paroppinv\charplus\paropminv F$.
Recalling Lemma \ref{wholeop_bdd_lem}, we estimate
\begin{align*}
\norm{\uf}_{m,p,\halfgrp} &\leq \norm{\uf}_{m,p}\\
&= \inf\setc{\norm{\paroppinv\charplus\paropminv F}_{m,p}}{F\in\HSRcompl{-m}{p}(\grp),\ \supp\np{F-f}\subset\lhalfgrpclosed}\\
&\leq \Ccn{C}\, \inf\setc{\norm{F}_{-m,p}}{F\in\HSRcompl{-m}{p}(\grp),\ \supp\np{F-f}\subset\lhalfgrpclosed} \\ 
&=\Ccn{C}\, \norm{f}_{-m,p,\halfgrp}.
\end{align*}
\end{proof}

Finally, we can establish the main theorem in the case of the spatial domain being the half-space $\halfspace$ and boundary operators corresponding to Dirichlet boundary 
conditions. 

\begin{thm}\label{dirichlet_sol_thm}
Assume  $\elop$ is properly elliptic and satisfies Agmon's condition on the two rays $\e^{i\theta}$ with $\theta=\pm\frac{\pi}{2}$.
Let $p\in\np{1,\infty}$. For $\f\in \LRcompl{p}(\halfgrp)$ and 
$\g\in\tracespacecompl{\Dtracekonst}{p}(\torus\times\R^n_+)$ 
there is a unique $\uf\in \WSRcompl{1,2m}{p}(\halfgrp)$ subject to
 \begin{align}
 \begin{pdeq}\label{dirichlet_sol_prob}
  \partial_t\uf+\elop \uf&=\f && \tin \halfgrp, \\
  \trace_{m}\uf&=g && \ton \torus\times\partial\R^n_+.
 \end{pdeq}
 \end{align}
 Moreover, there is a constant $c=c(n,p)>0$ such that
 \begin{align}\label{dirichlet_sol_est}
  \norm{\uf}_{\WSR{1,2m}{p}(\halfgrp)}\leq c (\norm{\f}_{\LR{p}(\halfgrp)}+\norm{\g}_{\tracespacecompl{\Dtracekonst}{p}(\torus\times\R^n_+)}).
 \end{align}
\end{thm}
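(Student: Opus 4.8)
The strategy: deduce uniqueness directly from Lemma~\ref{weak_sol_lem}; reduce existence to the case $\g=0$; solve that case with the representation formula \eqref{weak_sol_lem_repformula}, which a priori only supplies an $\HSRcompl{m}{p}$\nobreakdash-solution; and then upgrade the regularity of this solution to $\WSRcompl{1,2m}{p}(\halfgrp)=\HSRcompl{2m}{p}(\halfgrp)$.

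\emph{Uniqueness.} If $\uf\in\WSRcompl{1,2m}{p}(\halfgrp)$ solves \eqref{dirichlet_sol_prob} with $\f=0$ and $\g=0$, then, since $\WSRcompl{1,2m}{p}(\halfgrp)=\HSRcompl{2m}{p}(\halfgrp)\embeds\HSRcompl{m}{p}(\halfgrp)$ (Proposition~\ref{TPBesselEqualsTPSobWholespacecaseProp} and Lemma~\ref{MultiplierLem_NegHomoMultipliers}) and $\trace_m\uf=0$, the last assertion of Lemma~\ref{BS_TraceSpaceBesselPotentialLem} provides $U\in\HSRcompl{m}{p}(\grp)$ with $\supp U\subset\halfgrpclosed$ and $\funcrestriction{U}{\halfgrp}=\uf$. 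Then $\funcrestriction{(\parop U)}{\halfgrp}=\parop\uf=0$, so $\supp\bp{\parop U}\subset\lhalfgrpclosed$, i.e.\ $(U,0)$ satisfies \eqref{weak_sol_1}; the uniqueness part of Lemma~\ref{weak_sol_lem} forces $U=0$, hence $\uf=0$.

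\emph{Reduction to $\g=0$.} By the trace theory recalled around \eqref{TraceOprMappingproperties}, and since $\projcompl$ commutes with $\trace_m$, the trace operator admits a bounded right inverse $\OprE\colon\tracespacecompl{\Dtracekonst}{p}(\torus\times\R^n_+)\to\WSRcompl{1,2m}{p}(\halfgrp)$. Put $\wf:=\OprE\g$ and $\hf:=\f-\parop\wf$; since $\parop\wf=\partial_t\wf+\elop\wf\in\LRcompl{p}(\halfgrp)$, we have $\hf\in\LRcompl{p}(\halfgrp)$ with $\norm{\hf}_{\LR{p}(\halfgrp)}\le c\bp{\norm{\f}_{\LR{p}(\halfgrp)}+\norm{\g}_{\tracespacecompl{\Dtracekonst}{p}(\torus\times\R^n_+)}}$. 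It therefore suffices to construct $\vf\in\WSRcompl{1,2m}{p}(\halfgrp)$ with $\parop\vf=\hf$ in $\halfgrp$, $\trace_m\vf=0$ and $\norm{\vf}_{\WSR{1,2m}{p}(\halfgrp)}\le c\norm{\hf}_{\LR{p}(\halfgrp)}$; then $\uf:=\wf+\vf$ solves \eqref{dirichlet_sol_prob} and \eqref{dirichlet_sol_est} follows.

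\emph{The candidate solution.} Extend $\hf$ by zero to $\ti\hf\in\LRcompl{p}(\grp)\embeds\HSRcompl{-m}{p}(\grp)$ and set $\vf_0:=\paroppinv\charplus\paropminv\ti\hf$. By Lemma~\ref{weak_sol_lem}, $\vf_0\in\HSRcompl{m}{p}(\grp)$, $\supp\vf_0\subset\halfgrpclosed$, $\parop\vf_0=\hf$ in $\halfgrp$, and $\norm{\vf_0}_{m,p}\le c\norm{\hf}_{\LR{p}(\halfgrp)}$; moreover $\trace_m\bp{\funcrestriction{\vf_0}{\halfgrp}}=0$ by Lemma~\ref{BS_TraceSpaceBesselPotentialLem}. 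Set $\vf:=\funcrestriction{\vf_0}{\halfgrp}$, which is purely oscillatory. Everything reduces to showing $\vf\in\WSRcompl{1,2m}{p}(\halfgrp)$ with the asserted bound; this is the heart of the matter, since Lemma~\ref{weak_sol_lem} supplies $\vf$ only in $\HSRcompl{m}{p}$, the loss of $m$ orders stemming from the fact that the cut-off $\charplus$ in \eqref{weak_sol_lem_repformula} is an $\LR{p}(\grp)$-multiplier but not an $\HSRcompl{s}{p}$-multiplier for $s\ge1/p$. I would recover the missing $2m$ orders in two steps.

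\emph{Step (i): extra tangential regularity is free.} The tangential Fourier multiplier $\op\bb{\pkxp^j}$ commutes with $\paroppinv$, $\paropminv$ and with multiplication by $\charplus$ (a function of $x_n$ alone), hence $\op\bb{\pkxp^j}\vf_0=\paroppinv\charplus\paropminv\bp{\op\bb{\pkxp^j}\ti\hf}$. For $0\le j\le m$ one has $\op\bb{\pkxp^j}\ti\hf\in\HSRcompl{-j}{p}(\grp)\embeds\HSRcompl{-m}{p}(\grp)$ with norm $\le c\norm{\hf}_{\LR{p}(\halfgrp)}$, so Lemma~\ref{weak_sol_lem} gives $\op\bb{\pkxp^j}\vf_0\in\HSRcompl{m}{p}(\grp)$ with comparable norm. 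In particular $\op\bb{\pkxp^{2m}}\vf_0=\op\bb{\pkxp^m}\op\bb{\pkxp^m}\vf_0\in\LRcompl{p}(\grp)$, and together with $\vf_0\in\HSRcompl{m}{p}(\grp)$ and the multiplier bounds of Lemma~\ref{MultiplierLem_NegHomoMultipliers}/Corollary~\ref{MultiplierLem} (applied to the bounded parabolically $0$-homogeneous symbols $\xi'^{\gamma}/\pkxp^{2m}$ and $k/\pkxp^{2m}$) this yields $\partial_t\vf_0,\ \partial_{x'}^{\gamma}\vf_0\in\LRcompl{p}(\grp)$ for all tangential multi-indices $\gamma$ with $\snorm{\gamma}\le2m$, i.e.\ $\vf\in\LR{p}\bp{\R_+;\WSR{1,2m}{p}(\torus\times\R^{n-1})}$ with the right bound.

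\emph{Step (ii): the equation trades tangential for normal regularity.} Since $\elop$ is properly elliptic, $a:=\elop(\en)\ne0$, and in $\halfgrp$
\begin{align*}
a\,D_n^{2m}\vf=\hf-\partial_t\vf-\sum_{\snorm{\alpha}=2m,\ \alpha_n<2m}a_\alpha D^\alpha\vf.
\end{align*}
Writing each $D^\alpha$ with $\alpha_n<2m$ as a bounded $0$-homogeneous tangential multiplier composed with $\partial_n^{\alpha_n}\op\bb{\pkxp^{2m-\alpha_n}}$, and using Proposition~\ref{TPBesselEqualsTPSobHalfspacecaseProp} in the iterated form $\norm{u}_{2m,p,\halfgrp}\le c\sum_{j=0}^{2m}\norml{\partial_n^j\op\bb{\pkxp^{2m-j}}u}_{\LR{p}(\halfgrp)}$ together with Proposition~\ref{TPBesselEqualsTPSobWholespacecaseProp}, one bootstraps — in $m$ steps, each raising the normal order by one and consuming one tangential order from (i) — to conclude $\partial_n^j\vf\in\LR{p}(\halfgrp)$ for $0\le j\le2m$, i.e.\ $\vf\in\WSR{2m}{p}\bp{\R_+;\LRcompl{p}(\torus\times\R^{n-1})}$, again with norm $\le c\norm{\hf}_{\LR{p}(\halfgrp)}$. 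In view of $\WSR{1,2m}{p}(\halfgrp)=\LR{p}\bp{\R_+;\WSR{1,2m}{p}(\torus\times\R^{n-1})}\cap\WSR{2m}{p}\bp{\R_+;\LR{p}(\torus\times\R^{n-1})}$ this gives $\vf\in\WSRcompl{1,2m}{p}(\halfgrp)$ with the estimate, completing the proof.

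The delicate point is step (ii): keeping track of the anisotropic (parabolic) Sobolev scale through the bootstrap, and in particular checking that the mixed derivatives $\partial_n^j\op\bb{\pkxp^{2m-j}}\vf$ with $m<j<2m$ — precisely the terms that obstruct a one-shot argument — are controlled by the iteration and absorbed into the left-hand side. Step (i), the reduction to $\g=0$, and uniqueness are routine given the tools already developed.
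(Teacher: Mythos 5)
Your proposal is correct and follows essentially the same route as the paper: uniqueness via the zero-extension of Lemma~\ref{BS_TraceSpaceBesselPotentialLem} combined with the uniqueness part of Lemma~\ref{weak_sol_lem}, reduction to $\g=0$ by trace lifting, the candidate solution $\paroppinv\charplus\paropminv\ti\hf$ from \eqref{weak_sol_lem_repformula}, and the regularity upgrade by commuting tangential multipliers $\op\bb{\pkxp^j}$ with the solution operator and bootstrapping with Proposition~\ref{TPBesselEqualsTPSobHalfspacecaseProp} and the equation. The only (cosmetic) difference is that you front-load all the tangential regularity as a separate step before the $m$-step normal bootstrap, whereas the paper interleaves the two by running a single induction on the scale $\HSRcompl{m+l}{p}(\halfgrp)$, gaining one order per step.
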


\begin{proof}
We first assume $g=0$. Extending $\f$ by zero to the whole space $\torus\times\R^n$, we have $f\in \LR{p}_\bot(\grp)\subset \HSR{-m}{p}_\bot(\grp)$. 
Let $\uf\in\HSRcompl{m}{p}(\grp)$ be the solution to \eqref{weak_sol_1} from Lemma \ref{weak_sol_lem}. Lemma \ref{BS_TraceSpaceBesselPotentialLem} yields $\trace_{m}\uf=0$. Thus, $\uf$ is a solution to \eqref{dirichlet_sol_prob}. We shall establish
higher order regularity of $\uf$ iteratively. For this purpose, we employ Proposition \ref{TPBesselEqualsTPSobHalfspacecaseProp} to estimate
\begin{align*}
\norm{\uf}_{m+1,p,\halfgrp}
&\leq \Ccn{C} \Bp{\norm{\partial_n^{2m}\uf}_{-m+1,p,\halfgrp} +\sum_{j=0}^{2m-1}\norm{\partial_n^j\op\bb{{\parnorm{k}{\xip}^{2m-j}}}\uf}_{-m+1,p,\halfgrp}}\\
&\leq \Ccn{C} \bp{\norm{\partial_n^{2m}\uf}_{-m+1,p,\halfgrp} +\norm{\op\bb{{\parnorm{k}{\xip}}}\uf}_{m,p,\halfgrp}}.
\end{align*}
Since the symbol of $\parop$ reads $\Fparopper(\kt,\xi',\xi_n)=a\xi_n^{2m}+i\kt+\sum_{k=0}^{2m-1}\sum_{|\alpha|= 2m-k} a_{\alpha,k}(\xi')^\alpha \xi_n^k$ with $a\neq 0$, we deduce with the help of Lemma \ref{MultiplierLem_NegHomoMultipliers} that 
\begin{align*}
\norm{\partial_n^{2m}\uf}_{-m+1,p,\halfgrp}\leq \Ccn{C}\bp{\norm{A\uf}_{-m+1,p,\halfgrp}+\norm{\op\bb{{\parnorm{k}{\xip}}}\uf}_{m,p,\halfgrp}}.
\end{align*}
Consequently,
\begin{align*}
\norm{\uf}_{m+1,p,\halfgrp}
&\leq \Ccn{C} \bp{\norm{f}_{-m+1,p,\halfgrp} +\norm{\op\bb{{\parnorm{k}{\xip}}}\uf}_{m,p,\halfgrp}}.
\end{align*}
Clearly, $\op\bb{\parnorm{k}{\xip}}$ commutes with $\paroppinv\charplus\paropminv$, whence
\begin{align*}
\norm{\uf}_{m+1,p,\halfgrp}
&\leq \Ccn{C} \bp{\norm{f}_{-m+1,p,\halfgrp} +\norm{\paroppinv\charplus\paropminv\op\bb{{\parnorm{k}{\xip}}}\f}_{m,p,\halfgrp}}.
\end{align*}
By Lemma \ref{weak_sol_lem}, we finally obtain
\begin{align*}
\norm{\uf}_{m+1,p,\halfgrp}
&\leq \Ccn{C} \bp{\norm{f}_{-m+1,p,\halfgrp} +\norm{\op\bb{{\parnorm{k}{\xip}}}\f}_{-m,p,\halfgrp}}
\leq \Ccn{C} {\norm{f}_{-m+1,p,\halfgrp}}.
\end{align*}
Iterating this procedure, we obtain after $m$ steps the desired regularity $\uf\in\HSRcompl{2m}{p}(\halfgrp)$ together with the estimate 
$\norm{\uf}_{2m,p,\halfgrp}\leq\Ccn{C}\norm{\f}_p$. Recalling from Proposition \ref{TPBesselEqualsTPSobWholespacecaseProp} that $\HSRcompl{2m}{p}(\halfgrp)=\WSRcompl{1,2m}{p}(\halfgrp)$, we conclude \eqref{dirichlet_sol_est} in the case $g=0$. 
 
If $\g\neq 0$, we recall the properties \eqref{TraceOprMappingproperties} of the trace operator and
choose a function $\vf\in \WSR{1,2m}{p}(\torus\times\R^n_+)$ with $\trace_m\vf=g$ and 
$\norm{\vf}_{2m,p}\leq \Ccn{C} \norm{g}_{\tracespacecompl{\Dtracekonst}{p}(\torus\times\R^n_+)}$. 
 With $\wf:=\uf-\vf$, problem \eqref{dirichlet_sol_prob} is then reduced to
 \begin{align*}
 \begin{pdeq}
  \partial_t\wf+\elop\wf&=\f-(\partial_t\vf+\elop\vf) && \tin \halfgrp, \\
  \trace_{m}\wf&=0 && \ton \torus\times\partial\R^n_+,
 \end{pdeq}
 \end{align*}
 and the assertion readily follows from the homogeneous part already proven. 

To show uniqueness, assume $\uf\in \WSRcompl{1,2m}{p}(\halfgrp)$ is a solution the \eqref{dirichlet_sol_prob} with homogeneous data $f=g=0$.
By Lemma \ref{BS_TraceSpaceBesselPotentialLem} there is an extension $U\in\WSRcompl{1,2m}{p}(\grp)$ of $\uf$ with $\supp U\in\halfgrpclosed$.
By Lemma \ref{weak_sol_lem}, $U=0$.
\end{proof}

\subsection{The Half Space with General Boundary Conditions}\label{HalfSpaceGeneralBoundaryConditionsSection}

Let $\Omega:=\R^n_+$ and $(\elopfull,\bdopfull_1,\ldots,\bdopfull_m)$ be differential operators of the form \eqref{DefOfDiffOprfull} with constant
coefficients. 

\begin{lem}\label{CharmatrixLem}
Assume $\elop$ is properly elliptic 
and $(\elop,\bdop_1,\ldots,\bdop_m)$ satisfies Agmon's complementing condition on the two rays $\e^{i\theta}$ with $\theta=\pm\frac{\pi}{2}$. 
Let $(k,\xi')\in\perf\Z\times\R^{n-1}\minusnullset$. Consider as polynomials in $z$ the mappings $\xin\mapsto\Fparopper_+(k,\xi',\xin)$ and $\xin\mapsto\bdop_j(\xi',\xin)$, where $\Fparopper_+$ is defined as in Lemma \ref{halfop_bdd_lem}.
For $j=1,\ldots,m$ let 
$\charmatrix_{(j-1)l}(k,\xi')$, $l=0,\ldots,m-1$, denote the coefficients of the polynomial
$\bdop_j(\xi',\xin)\mod\Fparopper_+(k,\xi',\xin)$.
The corresponding matrix 
$\charmatrix(k,\xi')\in\C^{m\times m}$ is called \emph{characteristic matrix}. 
The characteristic matrix function 
$\charmatrix:\perf\Z\times\R^{n-1}\minusnullset\ra\C^{m\times m}$ 
has an extension 
$\charmatrix:\R\times\R^{n-1}\minusnullset\ra\C^{m\times m}$ that satisfies $(j,l=0,\ldots,m-1)$
\begin{align}
&\forall\lambda>0:\quad \charmatrix_{jl}(\eta,\xi') = \lambda^{l-m_{j+1}}\charmatrix_{jl}(\lambda^{2m}\eta,\lambda\xi'),\label{CharmatrixLem_Scaling}\\
&\charmatrix_{jl}\in\CRi\bp{\R\times\R^{n-1}\minusnullset}^{m\times m},\label{CharmatrixLem_Smooth}\\
&\snorm{\charmatrix_{jl}(\eta,\xip)}\leq \Ccn[CharmatrixLem_GrowthBoundConst]{C}\,{\parnorm{\eta}{\xip}}^{m_{j+1}-l}.\label{CharmatrixLem_GrowthBound}
\end{align}
Moreover, $\charmatrix(\eta,\xip)$ is invertible and the inverse matrix function $\invcharmatrix(\eta,\xip)$ satisfies 
\begin{align}
&\forall\lambda>0:\quad \invcharmatrix_{jl}(\eta,\xi') = \lambda^{m_{l+1}-j}\invcharmatrix_{jl}(\lambda^{2m}\eta,\lambda\xi'),\label{CharmatrixLem_InvScaling}\\
&\invcharmatrix_{jl}\in\CRi\bp{\R\times\R^{n-1}\minusnullset}^{m\times m},\label{CharmatrixLem_InvSmooth}\\
&\snorm{\invcharmatrix_{jl}(\eta,\xip)}\leq \Ccn[CharmatrixLem_InvGrowthBoundConst]{C}\,{\parnorm{\eta}{\xip}}^{j-m_{l+1}}.\label{CharmatrixLem_InvGrowthBound}
\end{align}
\end{lem}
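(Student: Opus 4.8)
The plan is to deduce every assertion from the properties of $\Fparop_+$ recorded in Lemma~\ref{SymbolPropertiesLem}, together with the elementary fact that Euclidean division by a \emph{monic} polynomial is itself a polynomial operation in the coefficients. By Lemma~\ref{SymbolPropertiesLem}, for each $(\eta,\xi')\in\R\times\R^{n-1}\minusnullset$ the map $\xin\mapsto\Fparop_+(\eta,\xi',\xin)$ is monic of degree $m$, its non-leading coefficients $\Fparopcoef^+_1(\eta,\xi'),\dots,\Fparopcoef^+_m(\eta,\xi')$ are analytic on $\R\times\R^{n-1}\minusnullset$, and $\Fparopcoef^+_\alpha$ is parabolically $\alpha$-homogeneous. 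First I would extend $\charmatrix$ by declaring, for all $(\eta,\xi')\in\R\times\R^{n-1}\minusnullset$ and $j,l\in\set{0,\dots,m-1}$, that $\charmatrix_{jl}(\eta,\xi')$ is the coefficient of $\xin^l$ in the remainder $r_{j+1}(\eta,\xi',\cdot)$ of the Euclidean division of $\bdop_{j+1}(\xi',\cdot)$ by $\Fparop_+(\eta,\xi',\cdot)$; on $\perf\Z\times\R^{n-1}\minusnullset$ this coincides with the characteristic matrix, since there $\Fparop_+=\Fparopper_+$. Running the division algorithm exhibits each $\charmatrix_{jl}(\eta,\xi')$ as a polynomial in the (constant) coefficients of $\bdop_{j+1}$ and in $\Fparopcoef^+_1(\eta,\xi'),\dots,\Fparopcoef^+_m(\eta,\xi')$, which yields \eqref{CharmatrixLem_Smooth} at once.

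For the scaling \eqref{CharmatrixLem_Scaling} I would invoke uniqueness of Euclidean division. Fix $\lambda>0$ and evaluate the identity $\bdop_{j+1}(\xi',\xin)=q_{j+1}(\eta,\xi',\xin)\,\Fparop_+(\eta,\xi',\xin)+r_{j+1}(\eta,\xi',\xin)$ at $(\lambda^{2m}\eta,\lambda\xi',\lambda\xin)$; since $\bdop_{j+1}$ is homogeneous of degree $m_{j+1}$ in $(\xi',\xin)$ and $\Fparop_+$ is parabolically $m$-homogeneous, this rearranges into a Euclidean division of $\bdop_{j+1}(\xi',\cdot)$ by the monic $\Fparop_+(\eta,\xi',\cdot)$ with remainder $\lambda^{-m_{j+1}}r_{j+1}(\lambda^{2m}\eta,\lambda\xi',\lambda\,\cdot)$, so by uniqueness $r_{j+1}(\lambda^{2m}\eta,\lambda\xi',\lambda\xin)=\lambda^{m_{j+1}}r_{j+1}(\eta,\xi',\xin)$. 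Comparing the coefficient of $\xin^l$ gives $\charmatrix_{jl}(\lambda^{2m}\eta,\lambda\xi')=\lambda^{m_{j+1}-l}\charmatrix_{jl}(\eta,\xi')$, which is \eqref{CharmatrixLem_Scaling}. The bound \eqref{CharmatrixLem_GrowthBound} then follows because the parabolic unit set $\setc{(\eta,\xi')}{\parnorm{\eta}{\xi'}=1}$ is compact and $\charmatrix_{jl}$ continuous on it, hence bounded there by some $\Ccn{C}$, and \eqref{CharmatrixLem_Scaling} propagates the bound to all $(\eta,\xi')\neq(0,0)$ via $(\eta,\xi')=(\lambda^{2m}\eta_0,\lambda\xi_0')$ with $\lambda=\parnorm{\eta}{\xi'}$ and $\parnorm{\eta_0}{\xi_0'}=1$.

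The essential step is invertibility, where I would unwind Agmon's complementing condition on the rays $\theta=\pm\tfrac{\pi}{2}$ in the half-space. At a point of $\partial\halfspace$ the tangent vectors are the $(\xi',0)$ and the normal is $\en$, and as $(\theta,r)$ ranges over $\set{\tfrac{\pi}{2}}\times[0,\infty)\cup\set{-\tfrac{\pi}{2}}\times[0,\infty)$ the value $-r\e^{i\theta}$ ranges over all of $i\R$. Setting $-r\e^{i\theta}=i\eta$, the polynomial $\tau\mapsto-r\e^{i\theta}+\elop(\xi',\tau)$ of Definition~\ref{TPADN_AgmonCondAB} becomes $\tau\mapsto\Fparop(\eta,\xi',\tau)$, its $m$ roots $\tau_h^+$ with positive imaginary part become $\rho_1^+(\eta,\xi'),\dots,\rho_m^+(\eta,\xi')$, and therefore $\prod_{h=1}^m(\tau-\tau_h^+)=\Fparop_+(\eta,\xi',\tau)$. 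Hence Definition~\ref{TPADN_AgmonCondAB}(ii) is precisely the statement that, for every $(\eta,\xi')\in\R\times\R^{n-1}\minusnullset$, the residues $\bdop_j(\xi',\cdot)\bmod\Fparop_+(\eta,\xi',\cdot)$ ($j=1,\dots,m$) are linearly independent in the $m$-dimensional quotient $\C[\xin]/(\Fparop_+(\eta,\xi',\cdot))$, i.e.\ that their coefficient matrix $\charmatrix(\eta,\xi')$ is invertible. (For $\eta\in\perf\Z$ this is the hypothesis used verbatim; the passage to all $\eta\in\R$ is covered precisely because the two admissible rays together sweep out $i\R$, including the axis $\xi'=0$, $\eta\neq0$.) In particular $\det\charmatrix(\eta,\xi')\neq0$ on all of $\R\times\R^{n-1}\minusnullset$.

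Finally, \eqref{CharmatrixLem_InvSmooth} follows from Cramer's rule, the cofactors of $\charmatrix$ being smooth and $\det\charmatrix$ being smooth and nowhere zero on $\R\times\R^{n-1}\minusnullset$. For \eqref{CharmatrixLem_InvScaling} I would rewrite \eqref{CharmatrixLem_Scaling} as the matrix identity $\charmatrix(\lambda^{2m}\eta,\lambda\xi')=\mathrm{diag}(\lambda^{m_1},\dots,\lambda^{m_m})\,\charmatrix(\eta,\xi')\,\mathrm{diag}(1,\lambda^{-1},\dots,\lambda^{-(m-1)})$, invert to get $\invcharmatrix(\lambda^{2m}\eta,\lambda\xi')=\mathrm{diag}(1,\lambda,\dots,\lambda^{m-1})\,\invcharmatrix(\eta,\xi')\,\mathrm{diag}(\lambda^{-m_1},\dots,\lambda^{-m_m})$, and read off $\invcharmatrix_{jl}(\lambda^{2m}\eta,\lambda\xi')=\lambda^{j-m_{l+1}}\invcharmatrix_{jl}(\eta,\xi')$, which is \eqref{CharmatrixLem_InvScaling}; then \eqref{CharmatrixLem_InvGrowthBound} follows exactly as \eqref{CharmatrixLem_GrowthBound}, using this homogeneity and the boundedness of the continuous $\invcharmatrix_{jl}$ on the compact parabolic unit set. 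I expect the only genuinely delicate points to be the reconciliation of Agmon's complementing condition --- phrased with rays, a nonnegative parameter $r$, and tangent/normal directions --- with the bare linear-algebra condition $\det\charmatrix\neq0$ on the whole punctured set, and keeping careful track of the homogeneity exponents $m_{j+1}-l$ and $j-m_{l+1}$ through the division; everything else is routine.
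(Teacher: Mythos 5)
Your proof is correct and follows essentially the same route as the paper's: extend $\charmatrix$ by Euclidean division against the monic $\Fparop_+$, obtain the homogeneity \eqref{CharmatrixLem_Scaling} from uniqueness of that division, smoothness from the analyticity of the coefficients $\Fparopcoef^+_\alpha$, the growth bounds by scaling to the compact parabolic unit sphere, and the inverse statements via Cramer's rule and the inverted scaling identity. The only (immaterial) divergence is the invertibility step: you get $\det\charmatrix(\eta,\xip)\neq 0$ for every real $\eta$ directly by observing that the two rays $\theta=\pm\frac{\pi}{2}$ sweep out all of $i\R$, whereas the paper first records invertibility on $\perf\Z\times\R^{n-1}\minusnullset$ and then extends to all real $\eta$ via \eqref{CharmatrixLem_Scaling}; both arguments are valid.
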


\begin{proof}
For $j=1,\ldots,m$ the coefficients $\charmatrix_{(j-1)l}(\eta,\xi')$, $l=0,\ldots,m-1$, of the polynomial 
$\xin\mapsto\bb{\bdop_j(\xi',\xin)\mod\Fparop_+(\eta,\xi',\xin)}$ is clearly an extension of the $j$'th row of the characteristic matrix $\charmatrix$ to $(\eta,\xip)\in\R\times\R^{n-1}\minusnullset$. By definition we have
\begin{align}\label{CharmatrixLem_DivWithRest}
\bdop_j(\xi',\xin) = Q_j(\eta,\xi',\xin)\Fparop_+(\eta,\xi',\xin) + \sum_{l=0}^{m-1} \charmatrix_{(j-1)l}(\eta,\xi')\,\xin^l
\end{align}
for some polynomial $\xin\mapsto Q_j(\eta,\xi',\xin)$. Recalling \eqref{FparopDef} and that the roots $\rho_j^+$ are parabolically $1$-homogeneous, we deduce for $\lambda>0$ that 
\begin{align*}
\bdop_j(\xi',\xin) = \lambda^{-m_j}\bdop_j(\lambda \xi',\lambda\xin) = \widetilde{Q_j}(\eta,\xi',\xin)\Fparop_+(\eta,\xi',\xin) + \sum_{l=0}^{m-1} \lambda^{l-m_j}\charmatrix_{(j-1)l}(\lambda^{2m }\eta,\lambda \xi')\,\xin^l
\end{align*}
for some polynomial $\xin\mapsto\widetilde{Q_j}(\eta,\xi',\xin)$.  Comparing coefficients in the two expressions for the polynomial $\xin\mapsto\bdop_j(\xi',\xin)$ above yields \eqref{CharmatrixLem_Scaling}. By \eqref{FparopDef} we have
$\Fparop_+(k,\xi',\xin) = \sum_{\alpha=0}^m \Fparopcoef_\alpha(\eta,\xip)\xin^{m-\alpha}$
with coefficients $\Fparopcoef_\alpha(\eta,\xip)\in\CRi\bp{\R\times\R^{n-1}\minusnullset}$ and $\Fparopcoef_0(\eta,\xip)=1$. 
Polynomial division of $\xin\mapsto\bdop_j(\xip,\xin)$ with $\xin\mapsto\Fparop_+(k,\xi',\xin)$ thus yields a polynomial with coefficients in $\CRi\bp{\R\times\R^{n-1}\minusnullset}$, which establishes \eqref{CharmatrixLem_Smooth}. Choosing $\lambda:=\parnorm{\eta}{\xip}^{-1}$ in \eqref{CharmatrixLem_Scaling} we obtain
$\snorm{\charmatrix_{jl}(\eta,\xip)}\leq {\parnorm{\eta}{\xip}}^{m_{j+1}-l} \sup_{\parnorm{s}{\gamma}=1}\snorm{\charmatrix_{jl}(s,\gamma)}$
and thus \eqref{CharmatrixLem_GrowthBound}. As a direct consequence of Definition \ref{TPADN_AgmonCondAB},  
the rows of $\charmatrix(k,\xi')$ are linearly independent and $\charmatrix(k,\xip)$ therefore invertible for  
$(k,\xip)\in\perf\Z\times\R^{n-1}\minusnullset$.
The scaling property \eqref{CharmatrixLem_Scaling} implies that $\charmatrix(\eta,\xip)$ is also invertible for 
$(\eta,\xip)\in\R\times\R^{n-1}\minusnullset$. The corresponding scaling property \eqref{CharmatrixLem_InvScaling} for the inverse $\invcharmatrix(\eta,\xip)$ follows by multiplying \eqref{CharmatrixLem_Scaling} with 
$\lambda^l\invcharmatrix_{l\alpha}(\eta,\xip)\charmatrix_{\beta j}(\lambda^{2m}\eta,\lambda\xip)$ and summing over $j$ and $l$. Since $\invcharmatrix(\eta,\xip)=\bp{\det F(\eta,\xip)}^{-1}\cof F(\eta,\xip)^\transpose$, \eqref{CharmatrixLem_InvSmooth} follows 
from \eqref{CharmatrixLem_Smooth}. Finally, \eqref{CharmatrixLem_InvGrowthBound} follows from \eqref{CharmatrixLem_InvScaling}
in the same way \eqref{CharmatrixLem_GrowthBound} was derived from \eqref{CharmatrixLem_Scaling}.
\end{proof}

\begin{lem}\label{CharmatrixMultiplierLem}
Let $p\in(1,\infty)$. Under the same assumptions as in Lemma \ref{CharmatrixLem}, the operators $\op\bb{\charmatrix}$ and $\op\bb{\invcharmatrix}$ extend to bounded operators 
\begin{align}
&\op\bb{\charmatrix}:
\tracespacecompl{\Dtracekonst}{p}(\hgrp)\to\tracespacecompl{\Btracekonst}{p}(\hgrp),
\label{CharmatrixMultiplier}\\
&\op\bb{\invcharmatrix}:
\tracespacecompl{\Btracekonst}{p}(\hgrp)\to\tracespacecompl{\Dtracekonst}{p}(\hgrp).\label{InvCharmatrixMultiplier}
\end{align}
\end{lem}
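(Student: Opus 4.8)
The plan is to reduce \eqref{CharmatrixMultiplier} and \eqref{InvCharmatrixMultiplier} to entrywise estimates and then invoke the multiplier results of Section~\ref{gr}. Recall $\tracespacecompl{\Dtracekonst}{p}(\hgrp)=\prod_{i=1}^{m}\WSRcompl{\Dtracekonst_i,2m\Dtracekonst_i}{p}(\hgrp)$ and $\tracespacecompl{\Btracekonst}{p}(\hgrp)=\prod_{i=1}^{m}\WSRcompl{\Btracekonst_i,2m\Btracekonst_i}{p}(\hgrp)$, and that $\op\bb{\charmatrix}$ acts on a vector $g=(g_1,\dots,g_m)$ by $(\op\bb{\charmatrix}g)_{j+1}=\sum_{l=0}^{m-1}\op\bb{\charmatrix_{jl}}g_{l+1}$ for $j=0,\dots,m-1$. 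As these sums are finite, it suffices to show that for all $j,l\in\set{0,\dots,m-1}$ the scalar operator $\op\bb{\charmatrix_{jl}}$ maps $\WSRcompl{\Dtracekonst_{l+1},2m\Dtracekonst_{l+1}}{p}(\hgrp)$ boundedly into $\WSRcompl{\Btracekonst_{j+1},2m\Btracekonst_{j+1}}{p}(\hgrp)$, and, for \eqref{InvCharmatrixMultiplier}, that $\op\bb{\invcharmatrix_{jl}}$ maps $\WSRcompl{\Btracekonst_{l+1},2m\Btracekonst_{l+1}}{p}(\hgrp)$ boundedly into $\WSRcompl{\Dtracekonst_{j+1},2m\Dtracekonst_{j+1}}{p}(\hgrp)$. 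Everything lives on $\hgrp=\torus\times\R^{n-1}$, so Corollary~\ref{MultiplierLem} and Lemma~\ref{BS_SobSloboMultiplierLem} are applied with $\R^{n-1}$ in place of $\R^n$.

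For $\charmatrix$, I would fix $j,l$, set $\alpha:=m_{j+1}-l$, and split the symbol as $\charmatrix_{jl}=\pexp^{\alpha}\cdot\mult_{jl}$, where $\mult_{jl}:=\charmatrix_{jl}\,\pexp^{-\alpha}$. By \eqref{CharmatrixLem_Smooth} and \eqref{CharmatrixLem_Scaling}, and because $\pexp$ is smooth, strictly positive and parabolically $1$-homogeneous on $\R\times\R^{n-1}\minusnullset$, the function $\mult_{jl}$ is smooth on $\R\times\R^{n-1}\minusnullset$ and parabolically $0$-homogeneous. Since Fourier multipliers compose by multiplication of their symbols, $\op\bb{\charmatrix_{jl}}=\op\bb{\pkxp^{\alpha}}\circ\op\bb{\mult_{jl}}$ on $\SR_\bot(\hgrp)$. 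By Corollary~\ref{MultiplierLem}, $\op\bb{\mult_{jl}}$ is bounded on $\WSRcompl{\Dtracekonst_{l+1},2m\Dtracekonst_{l+1}}{p}(\hgrp)$ (legitimate since $\Dtracekonst_{l+1}=1-\frac{l}{2m}-\frac{1}{2mp}\in(0,1)$ for $l\le m-1$ and $p>1$), and by Lemma~\ref{BS_SobSloboMultiplierLem} with $\beta:=\Dtracekonst_{l+1}$, $\op\bb{\pkxp^{\alpha}}$ is bounded from $\WSRcompl{\Dtracekonst_{l+1},2m\Dtracekonst_{l+1}}{p}(\hgrp)$ into $\WSRcompl{\Dtracekonst_{l+1}-\frac{\alpha}{2m},2m\Dtracekonst_{l+1}-\alpha}{p}(\hgrp)$. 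A one-line computation gives $\Dtracekonst_{l+1}-\frac{\alpha}{2m}=1-\frac{m_{j+1}}{2m}-\frac{1}{2mp}=\Btracekonst_{j+1}$ and $2m\Dtracekonst_{l+1}-\alpha=2m\Btracekonst_{j+1}$, so composing the two bounded maps proves the entrywise claim, hence \eqref{CharmatrixMultiplier}. The only hypothesis of Lemma~\ref{BS_SobSloboMultiplierLem} to be checked is $\alpha\in\bp{2m(\Dtracekonst_{l+1}-1),2m\Dtracekonst_{l+1}}$; since $2m(\Dtracekonst_{l+1}-1)=-l-\frac1p$ and $2m\Dtracekonst_{l+1}=2m-l-\frac1p$, this amounts to $-\frac1p<m_{j+1}<2m-\frac1p$, which holds because the order $m_{j+1}$ of $\bdop_{j+1}$ satisfies $0\le m_{j+1}\le 2m-1$.

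The argument for $\invcharmatrix$ is the mirror image. By \eqref{CharmatrixLem_InvSmooth} and \eqref{CharmatrixLem_InvScaling}, $\invcharmatrix_{jl}$ is smooth on $\R\times\R^{n-1}\minusnullset$ and parabolically $(j-m_{l+1})$-homogeneous, so I would write $\invcharmatrix_{jl}=\pexp^{\,j-m_{l+1}}\cdot\tmult_{jl}$ with $\tmult_{jl}:=\invcharmatrix_{jl}\,\pexp^{\,m_{l+1}-j}$ smooth and parabolically $0$-homogeneous, bound $\op\bb{\tmult_{jl}}$ on $\WSRcompl{\Btracekonst_{l+1},2m\Btracekonst_{l+1}}{p}(\hgrp)$ by Corollary~\ref{MultiplierLem} (here $\Btracekonst_{l+1}=1-\frac{m_{l+1}}{2m}-\frac{1}{2mp}\in(0,1)$ since $0\le m_{l+1}\le 2m-1$, $p>1$), and apply Lemma~\ref{BS_SobSloboMultiplierLem} with $\beta:=\Btracekonst_{l+1}$ and exponent $j-m_{l+1}$. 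Its range condition reduces to $-\frac1p<j<2m-\frac1p$, true as $0\le j\le m-1$, and the target exponents $\Btracekonst_{l+1}-\frac{j-m_{l+1}}{2m}$ and $2m\Btracekonst_{l+1}-(j-m_{l+1})$ equal $\Dtracekonst_{j+1}$ and $2m\Dtracekonst_{j+1}$. Summing over the finitely many $l$ gives \eqref{InvCharmatrixMultiplier}.

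I do not anticipate a genuine obstacle here: the analytic substance is contained entirely in Corollary~\ref{MultiplierLem}, Lemma~\ref{BS_SobSloboMultiplierLem}, and the scaling and smoothness properties established in Lemma~\ref{CharmatrixLem}. The one point that requires care is the index bookkeeping — matching the parabolic homogeneity of each matrix entry to the change of differentiability between the relevant factors of the trace spaces, here $\Dtracekonst_{l+1}-\Btracekonst_{j+1}=\frac{m_{j+1}-l}{2m}$ (and $\Dtracekonst_{j+1}-\Btracekonst_{l+1}=\frac{m_{l+1}-j}{2m}$ for $\invcharmatrix$) — together with the elementary verification that the range condition of Lemma~\ref{BS_SobSloboMultiplierLem} and the constraints $\Dtracekonst_i,\Btracekonst_i\in(0,1)$ all hold once $m_j\in\set{0,\dots,2m-1}$ is used.
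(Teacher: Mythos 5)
Your proposal is correct and follows essentially the same route as the paper: reduce to the matrix entries, factor each entry as a power of the parabolic length $\pkxp$ times a parabolically $0$-homogeneous symbol via the scaling properties of Lemma \ref{CharmatrixLem}, and then combine Corollary \ref{MultiplierLem} with Lemma \ref{BS_SobSloboMultiplierLem} (both applied on $\hgrp$). Your index bookkeeping and the verification of the range condition $\alpha\in\bp{2m(\beta-1),2m\beta}$ are accurate; the paper carries out the same computation (only for $\invcharmatrix$, stating that $\charmatrix$ is analogous) without spelling out that verification.
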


\begin{proof}
For $g\in\SRcompl(\hgrp)^m$ we recall Lemma \ref{BS_SobSloboMultiplierLem} and estimate
\begin{align*}
\norm{\op\bb{\invcharmatrix}g}_{\tracespacecompl{\Dtracekonst}{p}} 
&\leq \sum_{j=1}^m \sum_{l=1}^m
\norm{\iFT_{\hgrp}\bb{\invcharmatrix_{(j-1)(l-1)}\FT_\hgrp\np{g_{l-1}}}}_{\WSRcompl{\Dtracekonst_j,2m\Dtracekonst_j}{p}(\hgrp)}\\
&\leq \sum_{j=1}^m \sum_{l=1}^m
\norm{\iFT_{\hgrp}\bb{\parnorm{k}{\xip}^{m_l-(j-1)}\invcharmatrix_{(j-1)(l-1)}\FT_\hgrp\np{g_{l-1}}}}_{\WSRcompl{\Btracekonst_l,2m\Btracekonst_l}{p}}.
\end{align*}
By \eqref{CharmatrixLem_InvScaling}, $\parnorm{k}{\xi'}^{m_l-(j-1)}\invcharmatrix_{(j-1)(l-1)}(k,\xip)$ is parabolically $0$-homogeneous. 
Corollary \ref{MultiplierLem} thus implies 
\eqref{InvCharmatrixMultiplier}. Assertion \eqref{CharmatrixMultiplier} is shown in a similar manner.
\end{proof}

\begin{lem}\label{HalfspaceGeneralBrdCondsSolutionRepformularLem}
Let the assumptions be as in Lemma \ref{CharmatrixLem}.
Let $(k,\xi')\in\perf\Z\times\R^{n-1}\minusnullset$ and $\gamma$ be a rectifiable Jordan curve in $\C$ that encircles all the roots $\rho_j^+(k,\xi')$ $(j=0,\ldots,m-1)$ of 
$\xin\mapsto\Fparopper_+(k,\xi',\xin)$. Let $c_l^+(k,\xi')\in\C$, $l=0,\ldots,m$, denote coefficients such that 
$\Fparopper_+(k,\xi',\xin)=\sum_{l=0}^m c_l^+(k,\xi')\xin^{m-l}$.
Put
\begin{align*}
\magicvector(k,\xi',\cdot):\R^+\ra\C^m,\quad \magicvector_\alpha(k,\xi',x_n):=
\frac{1}{2\pi i}\int_{\gamma} \frac{\sum_{l=0}^{m-\alpha-1} c_l^+(k,\xi')\xin^{m-\alpha-l}}{\Fparopper_+(k,\xi',\xin)} \e^{ix_n\xin}\,\darg\xin.
\end{align*}
Then for any $g=(g_0,\ldots,g_{m-1})\in\C^m$ the unique solution $u\in\WSR{2m}{2}(\R^+)$ to
\begin{align}\label{HalfspaceDirichletBrdCondsSolutionRepformularLem_ODE}
\begin{pdeq}
& \parop(k,\xi',\partial_n) u = 0 && \tin\R^+,\\
& \trace_{m}u(0) = g && 
\end{pdeq}
\end{align}
is given by $\uf(k,\xi',x_n) =  \magicvector(k,\xi',x_n)\cdot g$.
Moreover,
\begin{align}\label{HalfspaceDirichletBrdCondsSolutionRepformularLem_FormulaDerivatives}
\bdop_j(\xi',\partial_n) \uf(k,\xi',0) = {\charmatrix_{(j-1)l}(k,\xi')g_l}\qquad (j=1,\ldots,m).
\end{align}
\end{lem}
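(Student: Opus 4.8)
The plan is to read \eqref{HalfspaceDirichletBrdCondsSolutionRepformularLem_ODE} as a constant-coefficient linear ordinary differential equation of order $2m$ in the variable $x_n$ on the half-line $\R^+$, and to identify $\magicvector(k,\xi',\cdot)$ as the fundamental system of decaying solutions normalised at the boundary, constructed by the classical contour-integral method; cf. \cite[Section 5.3]{Triebel_InterpolationTheory1978} or \cite[Chapter 4.4]{TanabeBook}. Fix $(k,\xi')$ with $(k,\xi')\neq(0,0)$. By part (1) of Lemma \ref{SymbolPropertiesLem}, the polynomial $z\mapsto\Fparopper(k,\xi',z)=ik+\elop(\xi',z)$ has exactly $m$ roots $\rho_j^+(k,\xi')$ in the open upper half-plane and $m$ in the open lower half-plane, hence factors as $\Fparopper(k,\xi',z)=a\,\Fparopper_+(k,\xi',z)\,\Fparopper_-(k,\xi',z)$ with $a=\elop(e_n)\neq0$ and $\Fparopper_\pm$ monic of degree $m$. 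Under the partial Fourier transform in $t$ and $x'$ the operators $\parop$ and $\bdop_j$ turn into ordinary differential operators $\parop(k,\xi',\partial_n)$ and $\bdop_j(\xi',\partial_n)$ in $x_n$ that act on the exponential $e^{ix_n\xi_n}$ by multiplication with their symbols $\Fparopper(k,\xi',\xi_n)$ and $\bdop_j(\xi',\xi_n)$, respectively.

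First I would verify that each component $\magicvector_\alpha(k,\xi',\cdot)$ solves $\parop(k,\xi',\partial_n)u=0$ and lies in $\WSR{2m}{2}(\R^+)$. Differentiating under the integral sign (legitimate since $\gamma$ is compact), applying $\parop(k,\xi',\partial_n)$ multiplies the integrand by $\Fparopper(k,\xi',\xi_n)=a\,\Fparopper_+(k,\xi',\xi_n)\,\Fparopper_-(k,\xi',\xi_n)$, which cancels the denominator $\Fparopper_+$ and leaves a polynomial times $e^{ix_n\xi_n}$, whose integral over the closed curve $\gamma$ vanishes; thus $\parop(k,\xi',\partial_n)\magicvector_\alpha=0$. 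Evaluating the contour integral by residues, the poles enclosed by $\gamma$ are precisely the roots $\rho_j^+$, each contributing a term $(\text{polynomial in }x_n)\,e^{i\rho_j^+x_n}$; since $\Im\rho_j^+>0$, $\magicvector_\alpha\in\CRi([0,\infty))$ and it decays exponentially together with all its $x_n$-derivatives, in particular $\magicvector_\alpha\in\WSR{2m}{2}(\R^+)$.

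The heart of the argument is the boundary normalisation: $\trace_m\bigl(\magicvector(k,\xi',\cdot)\bigr)(0)$ equals the identity matrix, i.e. $\partial_n^\beta\magicvector_\alpha(k,\xi',0)=\delta_{\alpha\beta}$ for $\alpha,\beta\in\{0,\dots,m-1\}$. Differentiating under the integral and putting $x_n=0$ reduces this to a residue identity of the form $\frac{1}{2\pi i}\int_\gamma \xi_n^\beta\,N_\alpha(\xi_n)\,\Fparopper_+(k,\xi',\xi_n)^{-1}\,\darg{\xi_n}=\delta_{\alpha\beta}$, where $N_\alpha$ is the polynomial in the numerator of $\magicvector_\alpha$; since $\gamma$ encircles all $m$ zeros of the monic degree-$m$ polynomial $\Fparopper_+$, the left-hand side is the coefficient of $\xi_n^{m-1}$ in $\xi_n^\beta N_\alpha$ reduced modulo $\Fparopper_+$. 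Using that $N_\alpha$ is built from the coefficients $c_l^+$ of $\Fparopper_+$ precisely so that $\xi_n^{\alpha+1}N_\alpha(\xi_n)$ coincides with $\Fparopper_+(k,\xi',\xi_n)$ up to a remainder polynomial of degree $\leq\alpha$, a short direct computation gives this coefficient as $1$ for $\beta=\alpha$ and $0$ otherwise. Hence $\uf:=\magicvector(k,\xi',\cdot)\cdot g$ solves \eqref{HalfspaceDirichletBrdCondsSolutionRepformularLem_ODE}. For uniqueness I would note that the $2m$-dimensional solution space of the homogeneous ODE meets $L^2(\R^+)$ in exactly the $m$-dimensional subspace $\bigl\{\frac{1}{2\pi i}\int_\gamma R(\xi_n)\,\Fparopper_+(k,\xi',\xi_n)^{-1}e^{ix_n\xi_n}\,\darg{\xi_n}:\deg R\leq m-1\bigr\}$ (decay at $+\infty$ selects exactly the roots $\rho_j^+$); on this subspace the same coefficient-extraction argument shows the map $u\mapsto\trace_m u(0)$ to be injective, hence bijective by dimension, so the $\WSR{2m}{2}(\R^+)$-solution is unique.

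Finally, \eqref{HalfspaceDirichletBrdCondsSolutionRepformularLem_FormulaDerivatives} follows by linearity from evaluating $\bdop_j(\xi',\partial_n)\magicvector_\alpha(k,\xi',0)$: the Euclidean division $\bdop_j(\xi',\xi_n)=Q_j(\xi_n)\Fparopper_+(k,\xi',\xi_n)+\sum_{l=0}^{m-1}\charmatrix_{(j-1)l}(k,\xi')\xi_n^l$ splits the integrand into a polynomial part (vanishing contour integral) and the sum $\sum_{l=0}^{m-1}\charmatrix_{(j-1)l}(k,\xi')\,\xi_n^l\,N_\alpha(\xi_n)\,\Fparopper_+(k,\xi',\xi_n)^{-1}$; applying the residue identity of the previous paragraph termwise yields $\bdop_j(\xi',\partial_n)\magicvector_\alpha(k,\xi',0)=\charmatrix_{(j-1)\alpha}(k,\xi')$, which is \eqref{HalfspaceDirichletBrdCondsSolutionRepformularLem_FormulaDerivatives}. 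The one genuinely delicate point is the residue/coefficient bookkeeping in the boundary normalisation; everything else — the equation being solved, the exponential decay, and the formula for $\bdop_j$ — reduces to the single fact that a polynomial integrates to zero over the closed curve $\gamma$, together with the factorisation $\Fparopper=a\,\Fparopper_+\,\Fparopper_-$.
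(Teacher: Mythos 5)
Your argument is correct and follows essentially the same route as the paper: Cauchy's theorem kills the integral once $\Fparopper(k,\xi',\xin)=\text{const}\cdot\Fparopper_+\Fparopper_-$ cancels the denominator, and the boundary values reduce to the residue identity $\frac{1}{2\pi i}\int_\gamma N_\alpha(\xin)\xin^\beta\,\Fparopper_+(k,\xi',\xin)^{-1}\,\darg{\xin}=\delta_{\alpha\beta}$, which the paper imports from Lions--Magenes and you obtain by extracting the coefficient of $\xin^{m-1}$ modulo $\Fparopper_+$; your extra verifications of exponential decay (hence $\magicvector_\alpha\in\WSR{2m}{2}(\R^+)$) and of uniqueness via the dimension count of decaying solutions are details the paper leaves implicit. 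One bookkeeping caveat at precisely the step you flag as delicate: your normalisation ``$\xin^{\alpha+1}N_\alpha=\Fparopper_+{}+{}(\text{degree}\le\alpha)$'' corresponds to the numerator $\sum_{l=0}^{m-\alpha-1}c_l^+\xin^{m-\alpha-l-1}$, whereas the lemma as printed has exponent $m-\alpha-l$, for which one only gets $\xin^{\alpha}N_\alpha=\Fparopper_+{}+{}(\text{degree}\le\alpha)$ and the contour integral does \emph{not} evaluate to $\delta_{\alpha\beta}$ (for $m=2$, $\alpha=\beta=0$ the printed numerator is $\Fparopper_+-c_2^+$ and the integral is $0$, not $1$). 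Your version is the one that makes the lemma true; the printed formula appears to carry an off-by-one in the exponent, so you should state explicitly which $N_\alpha$ you are using.
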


\begin{proof}
With $\uf$ as above, Cauchy's integral theorem yields
\begin{align*}
\parop(k,\xi',\partial_n) u = 
\Bp{\frac{1}{2\pi i}\int_{\gamma} {\sum_{l=0}^{m-\alpha-1} c_l^+(k,\xi')\xin^{m-\alpha-l}}\Fparopper_-(k,\xi',\xin) \e^{ix_n\xin}\,\darg\xin}
g_\alpha
= 0.
\end{align*}
On the same token
\begin{align*}
\bdop_j(\xi',\partial_n) u(0) 
&=  \sum_{\alpha,\beta=0}^{m-1}
\Bp{\frac{1}{2\pi i}\int_{\gamma}\frac{\sum_{l=0}^{m-\alpha-1} c_l^+(k,\xi')\xin^{m-\alpha-l+\beta}}{\sum_{l=0}^m c_l^+(k,\xi')\xin^{m-l}}\,\darg\xin}
\charmatrix_{(j-1)\beta}(k,\xi')g_\alpha.
\end{align*}
Since 
\begin{align*}
\Bp{\frac{1}{2\pi i}\int_{\gamma}\frac{\sum_{l=0}^{m-\alpha-1} c_l^+(k,\xi')\xin^{m-\alpha-l+\beta}}{\sum_{l=0}^m c_l^+(k,\xi')\xin^{m-l}}\,\darg\xin} = \delta_{\alpha\beta},
\end{align*}
which follows by choosing $\gamma$ to be a circle of sufficiently large radius $R$ and letting $R\ra\infty$ (see for example \cite[Chapter 2, Proposition 4.1]{LionsMagenes1}), \eqref{HalfspaceDirichletBrdCondsSolutionRepformularLem_FormulaDerivatives} follows.
\end{proof}

\begin{lem}\label{HalfspaceCharmatrixTransformationBrdValuesL2Setting}
Let the assumptions be as in Lemma \ref{CharmatrixLem}.
If $\uf\in\WSRcompl{1,2m}{2}(\halfgrp)$ satisfies $\parop\uf=0$, then $\bdop\uf= \op\nb{\charmatrix}\trace_m\uf$.
\end{lem}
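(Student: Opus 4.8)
The plan is to reduce this identity --- an equality of $m$-vectors of functions on the boundary group $\hgrp$ --- to a fibrewise statement by means of the partial Fourier transform in the time and tangential variables, with $\grpprime:=\torus\times\R^{n-1}$ and $\FT_\grp=\FT_\grpprime\FT_\R$ as in the proof of Lemma~\ref{paley_wiener_lem} (so $\FT_\grpprime$ is the Fourier transform on the boundary group $\hgrp$). Here $\bdop\uf$ is to be read as the $m$-vector of boundary traces $\bdop_j(D)\uf|_{x_n=0}$, which exist by the trace theory recalled in Section~\ref{pre} (recall $m_j\leq 2m-1$). First I would use the product representation $\WSR{1,2m}{2}(\halfgrp)=\LR{2}\bp{\R_+;\WSR{1,2m}{2}(\grpprime)}\cap\WSR{2m}{2}\bp{\R_+;\LR{2}(\grpprime)}$ together with Plancherel's theorem in $(t,x')$ to conclude that for almost every $(k,\xi')\in\perf\Z\times\R^{n-1}$ the fibre $v=v^{(k,\xi')}:=\nb{\FT_\grpprime\uf}(k,\xi',\cdot)$ lies in $\WSR{2m}{2}(\R^+)$. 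As $\uf=\projcompl\uf$ is purely oscillatory, $v^{(0,\xi')}=0$, so only the frequencies $k\neq 0$ --- for which $(k,\xi')\in\perf\Z\times\R^{n-1}\minusnullset$ --- matter, and there Lemmas~\ref{CharmatrixLem} and~\ref{HalfspaceGeneralBrdCondsSolutionRepformularLem} apply.

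Next I would transfer the equation and the boundary operators through $\FT_\grpprime$. By the Fourier calculus of Section~\ref{pre}, conjugation by $\FT_\grpprime$ carries $\parop=\partial_t+\elop$ and the $\bdop_j$ precisely into the ordinary differential operators $\parop(k,\xi',\partial_n)$ and $\bdop_j(\xi',\partial_n)$ of Lemma~\ref{HalfspaceGeneralBrdCondsSolutionRepformularLem}; in particular $\parop\uf=0$ turns into $\parop(k,\xi',\partial_n)v=0$ on $\R^+$. Since $\FT_\grpprime$ acts only in $(t,x')$, restriction to $\set{x_n=0}$ commutes with it, and hence fibrewise $\FT_\grpprime\nb{(\trace_m\uf)_l}(k,\xi')=\partial_n^l v(0)$ for $l=0,\ldots,m-1$ and $\FT_\grpprime\nb{\bdop_j(D)\uf|_{x_n=0}}(k,\xi')=\bdop_j(\xi',\partial_n)v(0)$ for $j=1,\ldots,m$.

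Then, for $k\neq 0$, I would apply Lemma~\ref{HalfspaceGeneralBrdCondsSolutionRepformularLem} with $g:=\trace_m v(0)$: the fibre $v\in\WSR{2m}{2}(\R^+)$ solves $\parop(k,\xi',\partial_n)v=0$ with these Dirichlet data, and since that lemma provides a \emph{unique} such solution, $v=\magicvector(k,\xi',\cdot)\cdot g$, so the formula \eqref{HalfspaceDirichletBrdCondsSolutionRepformularLem_FormulaDerivatives} applies to $v$ and yields $\bdop_j(\xi',\partial_n)v(0)=\sum_{l=0}^{m-1}\charmatrix_{(j-1)l}(k,\xi')\,\partial_n^l v(0)$. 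Combining this with the identifications of the previous step, and noting that for $k=0$ both sides vanish because $v^{(0,\xi')}=0$, we get $\FT_\grpprime\nb{\bdop\uf}=\charmatrix\,\FT_\grpprime\nb{\trace_m\uf}=\FT_\grpprime\nb{\op\nb{\charmatrix}\trace_m\uf}$ on all of $\perf\Z\times\R^{n-1}$; since $\FT_\grpprime$ is injective, $\bdop\uf=\op\nb{\charmatrix}\trace_m\uf$.

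The one genuinely technical point is the fibre decomposition: one has to verify carefully that for a.e. $(k,\xi')$ the fibre $v^{(k,\xi')}$ indeed lands in $\WSR{2m}{2}(\R^+)$ and satisfies the transferred ODE there, and that traces on $\set{x_n=0}$ interchange with $\FT_\grpprime$ --- both standard consequences of Plancherel's theorem and the product structure of $\WSR{1,2m}{2}(\halfgrp)$, but worth spelling out. Tracking the factors of $i$ that relate $\parop,\bdop_j$ (built from $D^\alpha=i^{|\alpha|}\partial^\alpha$) to the symbol operators $\parop(k,\xi',\partial_n),\bdop_j(\xi',\partial_n)$ is then purely bookkeeping under the conventions of Section~\ref{pre}.
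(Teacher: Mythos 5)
Your proposal is correct and follows essentially the same route as the paper: apply the partial Fourier transform on $\torus\times\R^{n-1}$, use Plancherel to reduce to the fibrewise ODE $\parop(k,\xi',\partial_n)v=0$ on $\R^+$ for almost every $(k,\xi')$, invoke the uniqueness and formula \eqref{HalfspaceDirichletBrdCondsSolutionRepformularLem_FormulaDerivatives} of Lemma \ref{HalfspaceGeneralBrdCondsSolutionRepformularLem}, and transform back. The paper's proof is terser (it does not spell out the fibre regularity, the commutation of traces with the transform, or the $k=0$ case), but these are exactly the routine verifications you correctly identify as standard.
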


\begin{proof}
Employ the partial Fourier transform $\FT_{\hgrp}$ to the equation $\parop\uf=0$, which in view of Plancherel's theorem implies
$\parop(k,\xi',\partial_n)\FT_{\hgrp}(\uf)=0$ for almost every $(k,\xi')$. 
By Lemma \ref{HalfspaceGeneralBrdCondsSolutionRepformularLem}, 
$\bdop_j(\xi',\partial_n)\FT_{\hgrp}(\uf)={\charmatrix_{(j-1)l}(k,\xi')\bp{\trace_m u(0)}_l}$. Employing $\iFT_{\hgrp}$, we obtain 
$\bdop\uf= \op\nb{\charmatrix}\trace_m\uf$.
\end{proof}

\begin{thm}\label{generalbrdvalues_sol_thm}
Assume $\elop$ is properly elliptic 
and $(\elop,\bdop_1,\ldots,\bdop_m)$ satisfies Agmon's complementing condition on the two rays $\theta=\pm\frac{\pi}{2}$. 
Let $p\in\np{1,\infty}$. For $\f\in \LR{p}_\bot(\halfgrp)$ and $\g\in\tracespacecompl{\Btracekonst}{p}(\torus\times\partial\R^n_+)$ there is a unique $\uf\in \WSRcompl{1,2m}{p}(\halfgrp)$ subject to
\begin{align}
\begin{pdeq}\label{generalbrdvalues_sol_prob}
 \partial_t\uf+\elop \uf&=\f && \tin \halfgrp, \\
 \bdop\uf&=g && \ton \torus\times\partial\R^n_+.
\end{pdeq}
\end{align}
Moreover, 
\begin{align}\label{generalbrdvalues_sol_est}
 \norm{\uf}_{\WSR{1,2m}{p}(\halfgrp)}\leq c (\norm{\f}_{\LR{p}(\halfgrp)}+\norm{\g}_{\tracespacecompl{\Btracekonst}{p}(\torus\times\partial\R^n_+)}),
\end{align}
where $c=c(n,p)>0$.
\end{thm}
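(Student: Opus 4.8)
The plan is to solve \eqref{generalbrdvalues_sol_prob} by reducing it, by means of the characteristic matrix from Lemma~\ref{CharmatrixLem}, to the Dirichlet half-space problem already settled in Theorem~\ref{dirichlet_sol_thm}. As a first step I would remove the interior datum: Theorem~\ref{dirichlet_sol_thm} with homogeneous boundary data yields $\uf_1\in\WSRcompl{1,2m}{p}(\halfgrp)$ with $\partial_t\uf_1+\elop\uf_1=\f$, $\trace_m\uf_1=0$, and $\norm{\uf_1}_{\WSR{1,2m}{p}(\halfgrp)}\leq\Ccn{C}\norm{\f}_{\LR{p}(\halfgrp)}$. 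Since each $\bdop_j$ has order $m_j\leq 2m-1$, the trace theorem \eqref{TraceOprMappingproperties} (combined with the standard mapping properties of tangential derivatives between the anisotropic Sobolev spaces) shows that $\bdop\uf_1$ is a well-defined element of $\tracespacecompl{\Btracekonst}{p}(\hgrp)$ with $\norm{\bdop\uf_1}_{\tracespacecompl{\Btracekonst}{p}(\hgrp)}\leq\Ccn{C}\norm{\uf_1}_{\WSR{1,2m}{p}(\halfgrp)}$. It therefore suffices to solve $\partial_t\wf+\elop\wf=0$ in $\halfgrp$ subject to $\bdop\wf=\g-\bdop\uf_1\in\tracespacecompl{\Btracekonst}{p}(\hgrp)$, and then set $\uf:=\uf_1+\wf$.

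For this reduced problem I would put $h:=\op\bb{\invcharmatrix}\np{\g-\bdop\uf_1}$, which by Lemma~\ref{CharmatrixMultiplierLem} belongs to $\tracespacecompl{\Dtracekonst}{p}(\hgrp)$ with $\norm{h}_{\tracespacecompl{\Dtracekonst}{p}(\hgrp)}\leq\Ccn{C}\norm{\g-\bdop\uf_1}_{\tracespacecompl{\Btracekonst}{p}(\hgrp)}$, and let $\wf\in\WSRcompl{1,2m}{p}(\halfgrp)$ be the solution of the Dirichlet problem $\partial_t\wf+\elop\wf=0$, $\trace_m\wf=h$ provided by Theorem~\ref{dirichlet_sol_thm}, so that $\norm{\wf}_{\WSR{1,2m}{p}(\halfgrp)}\leq\Ccn{C}\norm{h}_{\tracespacecompl{\Dtracekonst}{p}(\hgrp)}$. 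Granted the identity $\bdop\vf=\op\bb{\charmatrix}\trace_m\vf$ for every $\vf\in\WSRcompl{1,2m}{p}(\halfgrp)$ with $\parop\vf=0$ (established in the last paragraph), it follows that $\bdop\wf=\op\bb{\charmatrix}h=\op\bb{\charmatrix}\op\bb{\invcharmatrix}\np{\g-\bdop\uf_1}=\g-\bdop\uf_1$, because $\op\bb{\charmatrix}\op\bb{\invcharmatrix}=\op\bb{\charmatrix\invcharmatrix}=\id$; hence $\uf:=\uf_1+\wf$ satisfies $\partial_t\uf+\elop\uf=\f$ and $\bdop\uf=\bdop\uf_1+\bdop\wf=\g$, and chaining the four estimates above yields \eqref{generalbrdvalues_sol_est}. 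Uniqueness follows the same template: if $\uf\in\WSRcompl{1,2m}{p}(\halfgrp)$ solves \eqref{generalbrdvalues_sol_prob} with $\f=\g=0$, then $\parop\uf=0$, so $0=\bdop\uf=\op\bb{\charmatrix}\trace_m\uf$; applying $\op\bb{\invcharmatrix}$ gives $\trace_m\uf=0$, and the uniqueness part of Theorem~\ref{dirichlet_sol_thm} forces $\uf=0$.

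The one genuinely non-routine point is the identity $\bdop\vf=\op\bb{\charmatrix}\trace_m\vf$ for all $\vf\in\WSRcompl{1,2m}{p}(\halfgrp)$ with $\parop\vf=0$; this is the $\LR{p}$-counterpart of Lemma~\ref{HalfspaceCharmatrixTransformationBrdValuesL2Setting}, whose proof relies on Plancherel's theorem and is thus confined to $p=2$. I expect this to be the main obstacle, and I would overcome it by a density argument based on the explicit representation of Lemma~\ref{HalfspaceGeneralBrdCondsSolutionRepformularLem}. Observe that on the purely oscillatory frequency set $\perf\Z\setminus\set{0}\times\R^{n-1}$ the parabolic length is bounded below, $\parnorm{k}{\xi'}\geq\np{2\pi/\per}^{1/(2m)}$; hence, by the parabolic $1$-homogeneity of the roots $\rho_j^+$ and their continuity on the compact unit parabolic sphere (Lemma~\ref{SymbolPropertiesLem}), there is $c_0>0$ with $\Im\rho_j^+(k,\xi')\geq c_0\np{2\pi/\per}^{1/(2m)}>0$ uniformly in $(k,\xi')$ and $j$. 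Consequently, for boundary data $h\in\SRcompl(\hgrp)^m$, the function $\vf:=\iFT_{\hgrp}\bb{\magicvector(k,\xi',x_n)\cdot\FT_{\hgrp}h(k,\xi')}$ obtained by solving the ordinary differential equation of Lemma~\ref{HalfspaceGeneralBrdCondsSolutionRepformularLem} fibrewise is smooth and rapidly decreasing in the tangential variables and decays exponentially in $x_n$ (the kernel $\magicvector$ being built from the exponentials $\e^{ix_n\rho_j^+}$), so that $\vf\in\WSRcompl{1,2m}{2}(\halfgrp)\cap\WSRcompl{1,2m}{p}(\halfgrp)$; moreover, by Lemma~\ref{HalfspaceGeneralBrdCondsSolutionRepformularLem}, $\parop\vf=0$ and $\trace_m\vf=h$. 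By uniqueness in Theorem~\ref{dirichlet_sol_thm}, this $\vf$ coincides with the Dirichlet solution carrying the data $h$, and Lemma~\ref{HalfspaceCharmatrixTransformationBrdValuesL2Setting} gives $\bdop\vf=\op\bb{\charmatrix}\trace_m\vf=\op\bb{\charmatrix}h$ for all such $h$. Since $\SRcompl(\hgrp)^m$ is dense in $\tracespacecompl{\Dtracekonst}{p}(\hgrp)$, the Dirichlet solution operator $h\mapsto\vf$ is bounded into $\WSRcompl{1,2m}{p}(\halfgrp)$ (Theorem~\ref{dirichlet_sol_thm}), the map $\vf\mapsto\bdop\vf$ is bounded $\WSRcompl{1,2m}{p}(\halfgrp)\to\tracespacecompl{\Btracekonst}{p}(\hgrp)$, and $\op\bb{\charmatrix}$ is bounded (Lemma~\ref{CharmatrixMultiplierLem}); therefore the identity $\bdop\vf=\op\bb{\charmatrix}\trace_m\vf$ extends from this dense set to every $\vf\in\WSRcompl{1,2m}{p}(\halfgrp)$ with $\parop\vf=0$, which completes the argument.
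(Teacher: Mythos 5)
Your proposal is correct and follows essentially the same route as the paper: reduce to the Dirichlet problem of Theorem \ref{dirichlet_sol_thm} via $\op\nb{\invcharmatrix}$, invoke the $\LR{2}$ identity of Lemma \ref{HalfspaceCharmatrixTransformationBrdValuesL2Setting} on Schwartz data, and pass to general data by density together with the boundedness results of Lemma \ref{CharmatrixMultiplierLem}. The only organizational difference is that you establish the identity $\bdop\vf=\op\nb{\charmatrix}\trace_m\vf$ for all $\LR{p}$ null solutions up front (with a nice explicit justification of the exponential decay of the fibrewise solution), whereas the paper applies the $\LR{2}$ lemma directly in the Schwartz-data existence step and runs the density argument only in the uniqueness part.
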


\begin{proof}
As in the proof of Theorem \ref{dirichlet_sol_thm}, it suffices to show existence of a solution to \eqref{generalbrdvalues_sol_prob} satisfying
\eqref{generalbrdvalues_sol_est} for $f=0$ and $g\in\SRcompl(\hgrp)^m$. 
Since $\invcharmatrix$ is smooth away from the origin \eqref{CharmatrixLem_InvSmooth} and has at most polynomial growth  \eqref{CharmatrixLem_GrowthBound},
it follows that $\op\nb{\invcharmatrix} g\in\SRcompl(\hgrp)$. 
Consequently, Theorem \ref{dirichlet_sol_thm} yields existence of a solution 
$\uf\in\WSRcompl{1,2m}{p}(\halfgrp)\cap \WSRcompl{1,2m}{2}(\halfgrp)$ to 
\begin{align}\label{generalbrdvalues_sol_thm_transformedeq}
\begin{pdeq}
 \partial_t\uf+\elop \uf&=0 && \tin \halfgrp, \\
 \trace_m\uf&= \op\nb{\invcharmatrix} g && \ton \torus\times\partial\R^n_+.
\end{pdeq}
\end{align}
From Lemma \ref{HalfspaceCharmatrixTransformationBrdValuesL2Setting} it follows that $\uf$ is in fact a solution to \eqref{generalbrdvalues_sol_prob}.
Additionally, Theorem \ref{dirichlet_sol_thm} and Lemma \ref{CharmatrixMultiplierLem} imply
\begin{align*}
\norm{\uf}_{\WSR{1,2m}{p}(\halfgrp)} 
\leq \Ccn{c}\,\norm{\op\nb{\invcharmatrix} g}_{\tracespacecompl{\Dtracekonst}{p}(\torus\times\partial\R^n_+)}
\leq \Ccn{c}\,\norm{ g}_{\tracespacecompl{\Btracekonst}{p}(\torus\times\partial\R^n_+)}.
\end{align*} 
It remains to show uniqueness. Assume for this purpose that $\uf\in \WSRcompl{1,2m}{p}(\halfgrp)$ is a solution to \eqref{generalbrdvalues_sol_prob} 
with homogeneous right-hand side $f=g=0$. Let $\seqN{g_n}\subset\SRcompl(\hgrp)^m$ be a sequence with $\lim_{n\ra\infty}g_n=\trace_m\uf$ in $\tracespacecompl{\Dtracekonst}{p}(\hgrp)$. By virtue of Theorem \ref{dirichlet_sol_thm} there is a $\uf_n\in\WSRcompl{1,2m}{p}(\halfgrp)\cap \WSRcompl{1,2m}{2}(\halfgrp)$ with $\bb{\partial_t+\elop}\uf_n=0$ and $\trace_m\uf_n=g_n$.
Theorem \ref{dirichlet_sol_thm} and Lemma \ref{CharmatrixMultiplierLem} imply that $\lim_{n\ra\infty}\uf_n=\uf$ in $\WSRcompl{1,2m}{p}(\halfgrp)$ and 
thus $\bdop\uf_n\ra\bdop\uf=0$ in $\tracespacecompl{\Btracekonst}{p}(\hgrp)$. 
By Lemma \ref{HalfspaceCharmatrixTransformationBrdValuesL2Setting}, $\bdop\uf_n=\op\nb{\charmatrix}g_n$. Lemma \ref{CharmatrixMultiplierLem} thus
yields $\trace_m\uf = \lim_{n\ra\infty}g_n=0$. We conclude $\uf=0$ by Theorem \ref{dirichlet_sol_thm}.
\end{proof}

\section{Proof of the Main Theorems}

\begin{proof}[Proof of Theorem \ref{MainThm_HalfSpace}]
As already noted in Section \ref{gr}, the canonical bijection 
between $\CRci\np{\torus\times\R^n_+}$ and $\CRciper\np{\R\times\R^n_+}$ implies that 
$\WSR{1,2m}{p}\bp{\torus\times\R^n_+}$ and $\WSRper{1,2m}{p}(\R\times\R^n_+)$ are isometrically isomorphic.
It follows that 
$\WSRcompl{1,2m}{p}\bp{\torus\times\R^n_+}$ and $\projcompl\WSRper{1,2m}{p}(\R\times\R^n_+)$
as well as $\tracespacecompl{\Btracekonst}{p}(\torus\times\partial\R^n_+)$ and 
$\Pi_{j=1}^m \projcompl\WSRper{\kappa_j,2m\kappa_j}{p}(\R\times\partial\halfspace)$ are 
isometrically isomorphic.
Consequently, Theorem \ref{MainThm_HalfSpace} follows from Theorem \ref{generalbrdvalues_sol_thm}. 
\end{proof}

\begin{proof}[Proof of Theorem \ref{MainThm_TPADN}]
Theorem \ref{MainThm_TPADN} follows from Theorem \ref{MainThm_HalfSpace} by a
standard localization and perturbation argument. One can even apply the argument used 
in the elliptic case \cite{ADN1}; see also \cite[Chapter 4.8]{TanabeBook}.
\end{proof}

\bibliographystyle{abbrv}

\end{document}